\newtheorem{thm}{Theorem}
\newtheorem{prop}[thm]{Proposition}
\newtheorem{cor}[thm]{Corollary}
\newtheorem{lem}[thm]{Lemma}
\theoremstyle{definition}
\newtheorem{fact}[thm]{Fact}
\newtheorem{dfn}[thm]{Definition}
\newtheorem{rem}[thm]{Remark}
\def\Z{\mathbb{Z}}
\def\Q{\mathbb{Q}}
\def\R{\mathbb{R}}
\def\C{\mathbb{C}}
\def\bbS{\mathbb{S}}
\def\bB{\mathbf{B}}
\def\bD{\mathbf{D}}
\def\bP{\mathbf{P}}
\def\bP{\mathbf{P}}
\def\bn{\mathbf{n}}
\def\cE{\mathcal{E}}
\def\cF{\mathcal{F}}
\def\cH{\mathcal{H}}
\def\cU{\mathcal{U}}
\def\cZ{\mathcal{Z}}
\def\bbS{\mathbb{S}}
\def\fC{\mathfrak{C}}
\def\fc{\mathfrak{c}}
\def\fd{\mathfrak{d}}
\def\ff{\mathfrak{f}}
\def\ft{\mathfrak{t}}
\def\fC{\mathfrak{C}}
\def\Cl{\operatorname{Cl}}
\def\dist{\operatorname{dist}}
\def\End{\operatorname{End}}
\def\Hom{\operatorname{Hom}}
\def\id{\operatorname{id}}
\def\im{\operatorname{im}}
\def\Image{\operatorname{Im}}
\def\Ind{\operatorname{Ind}}
\def\Inv{\operatorname{Inv}}
\def\Inv{\operatorname{Inv}}
\def\Ob{\operatorname{Ob}}
\def\Pic{\operatorname{Pic}}
\def\pr{\operatorname{pr}}
\def\SWF{\operatorname{SWF}}
\def\oI{\overline{I}}
\def\oL{\overline{L}}
\def\os{\overline{s}}
\begin{document}

\title[Gluing formula]
{Gluing formula for the stable cohomotopy version of Seiberg-Witten invariants along 3-manifolds with $b_1 > 0$}
\author{Hirofumi Sasahira}


\renewcommand{\thefootnote}{\fnsymbol{footnote}}
\footnote[0]{Partly supported by Grant-in-Aid for Young Scientists (B) 25800040}
\footnote[0]{MSC 2010: 57R57, 57R58}


\address{
Graduate school of Mathematics, Nagoya University,\endgraf 
Furocho, Chikusaku, Nagoya, Japan.
}
\email{hsasahira@math.nagoya-u.ac.jp}

\maketitle

\begin{abstract}
We will define a version of  Seiberg-Witten-Floer stable homotopy types for a closed, oriented 3-manifold $Y$ with $b_1(Y) > 0$ and a spin-c structure $\fc$ on $Y$ with $c_1(\fc)$ torsion under an assumption on $Y$. Using the Seiberg-Witten-Floer stable homotopy type, we will construct a gluing formula for the stable cohomotopy version of  Seiberg-Witten invariants of a closed 4-manifold $X$ which has a decomposition $X = X_1 \cup_Y X_2$ along $Y$.

\end{abstract}

\section{Main statements}

In \cite{Mano b1=0}, Manolescu constructed an invariant $\SWF(Y, \fc)$ for a 3-manifold $Y$ with $b_1(Y) = 0$ and a spin-c $\fc$ on $Y$, which is defined as an object of a $U(1)$-equivariant stable homotopy category $\fC$ and is called  the Seiberg-Witten-Floer stable homotopy type.  It is conjectured that the $U(1)$-equivariant homology of $\SWF(Y,\fc)$ is isomorphic to the Seiberg-Witten-Floer homology constructed by Kronheimer-Mrowka \cite{KM Three manifold}. As an application of the Seiberg-Witten-Floer stable homotopy type, we can define a relative invariant for an oriented, compact 4-manifold with boundary $Y$ which is a generalization of the stable cohomotopy version of Seiberg-Witten invariants for a closed 4-manifold due to Bauer and Furuta \cite{BF}.   Manolescu \cite{Manolescu Gluing} also constructed a gluing formula for the stable cohomotopy version of Seiberg-Witten invariants along a 3-manifold $Y$ with $b_1(Y)=0$,  which calculates the invariant of a closed 4-manifold in terms of the relative invariants. More recently, a Pin(2)-equivariant version of $\SWF(Y, \fc)$ is used to disprove the triangulation conjecture \cite{Mano triangulation} and to prove 10/8-type inequalities for 4-manifolds with boundary \cite{FL, Lin, Mano intersection form} which are generalization of \cite{Furuta Monopole, FK}.

In the case where $b_1(Y) > 0$, the construction of $\SWF(Y,\fc)$ was discussed by Kronheimer and Manolescu in \cite{KM ver 1}.  However Furuta pointed out that there is an obstruction for $\SWF(Y, \fc)$ to be well defined. In this paper, we will construct a version of Seiberg-Witten-Floer stable homotopy types for $Y$ with $b_1(Y) > 0$ and a spin-c structure $\fc$ with $c_1(\fc)$ torsion, provided that $Y$ satisfies a condition. Although we basically follow \cite{KM ver 1}, we modify in some points. In particular, we will make use of a spectral section of a family of Dirac operators on $Y$, which was introduced by Melrose and Piazza in \cite{MP}. Using the Seiberg-Witten-Floer stable homotopy type, we will define a relative invariant for a 4-manifold with boundary, and construct a gluing formula for the stable cohomotopy version of Seiberg-Witten invariants along a 3-manifold $Y$ with $b_1(Y) > 0$. The precise statements are the following.

Let $Y$ be a closed, oriented 3-manifold, $g$ be a Riemannian metric on $Y$ and $\fc$ be a spin-c structure on $Y$ with $c_1(\fc)$ torsion. We have a family of Dirac operators $\bD_{\fc} = \{ D_{A_h} \}_{[h] \in \Pic(Y)}$ on $Y$ parametrized by $\Pic(Y) = H^1(Y;\R)/H^1(Y;\Z)$. (See Section \ref{subsec spectral section}.)
Define $q_Y$ by
\[
   q_Y: \Lambda^3 H^1(Y;\Z) \rightarrow \Z, \
    c_1 \wedge c_2 \wedge c_3 \mapsto \left< c_1 \cup c_2 \cup c_3, [Y] \right>.
\]
Suppose $q_Y = 0$. Then the index $\Ind \bD_{\fc} \in K^1(\Pic(Y))$ of $\bD_{\fc}$ is trivial (\cite[Proposition 6]{KM ver 3}).
By a result of Melrose and Piazza \cite[Proposition 1]{MP}, we can take a spectral section $\bP = \{ P_{h} \}_{[h] \in \Pic(Y)}$ of $\bD_{\fc}$.

\begin{thm}
Let $Y$ be a closed 3-manifold, $g$ be a Riemannian metric on $Y$ and $\fc$ be a spin-c structure on $Y$.
If $c_1(\fc)$ is torsion and $q_Y = 0$, then we can define a Seiberg-Witten-Floer stable homotopy type $\SWF(Y,\fc, H, g, \bP)$ as an object in a stable category $\fC$. (See Section \ref{section category} for the definition of $\fC$.) Here $H$ is a submodule of $H^1(Y;\Z)$ of rank $b_1(Y)$ and $\bP$ is a spectral section of $\bD_{\fc}$.
\end{thm}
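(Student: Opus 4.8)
The plan is to construct $\SWF(Y,\fc,H,g,\bP)$ by adapting the finite-dimensional approximation scheme of Manolescu and of Kronheimer–Manolescu to the present setting, with the spectral section $\bP$ playing the role that the (absent) canonical spectral decomposition of a single Dirac operator plays in the $b_1 = 0$ case. First I would set up the Seiberg-Witten equations on the cylinder $\R \times Y$ in the Coulomb-type slice determined by $H$: the choice of a rank-$b_1(Y)$ submodule $H \subseteq H^1(Y;\Z)$ fixes a splitting of the harmonic forms and hence a global gauge fixing that makes the reducible locus, i.e. the ``Picard torus'' direction, into a finite-dimensional manifold over which the family $\bD_\fc$ lives. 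The configuration space after this gauge fixing is (up to the residual discrete gauge group) a Hilbert bundle over $\Pic(Y)$, and the Seiberg-Witten flow is the gradient flow of the Chern–Simons–Dirac functional restricted to this slice.

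The central step is the finite-dimensional approximation. For a cutoff $\lambda$ one wants to replace the infinite-dimensional spinor space by the span of eigenvectors of $\bD_\fc$ with eigenvalue in $(-\lambda,\mu]$; the obstruction Furuta pointed out is precisely that, since $\Ind\bD_\fc$ may be a nontrivial element of $K^1(\Pic(Y))$, there is in general no continuous choice of such a spectral cutoff over the whole parameter torus, so the ``finite part'' of the spinor bundle is not globally well defined. This is where $q_Y = 0$ and the resulting triviality of $\Ind\bD_\fc$ enter, via \cite[Proposition 6]{KM ver 3} and \cite[Proposition 1]{MP}: the spectral section $\bP = \{P_h\}$ provides, for each $h$, a self-adjoint projection that agrees with the spectral projection of $D_{A_h}$ outside a compact spectral window and varies continuously in $h$, so that $\Image\bP$ is a genuine (finite-rank modulo a fixed infinite-rank summand) subbundle of the spinor bundle over $\Pic(Y)$. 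Using $\bP$ one defines, for each eigenvalue cutoff, a finite-dimensional approximation $V_\lambda^\mu$ of the configuration space that is now globally defined over the Picard torus, together with an approximated flow on it.

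Having the finite-dimensional approximated flows, I would show they satisfy the hypotheses needed to extract a Conley index: the flow is bounded (an a priori $C^0$ estimate on solutions of the SW equations with $c_1(\fc)$ torsion, so that the reducible set stays in a compact region), the relevant invariant sets $S_\lambda^\mu$ are isolated, and the Conley indices $I_\lambda^\mu$ for different cutoffs are related by suspension by the ``extra'' linear pieces $V_\lambda^{\mu'} \ominus V_\lambda^\mu$ (index pair/continuation arguments as in Manolescu). One then packages the pair $(I_\lambda^\mu, \text{formal desuspension})$ as an object of the stable category $\fC$ (Section \ref{section category}), and checks independence of the auxiliary choices $\lambda,\mu$ — so that the resulting object depends only on $(Y,\fc,H,g,\bP)$ — by the usual cofinality and homotopy-of-index-pairs arguments. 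The main obstacle, and the genuinely new input relative to \cite{Mano b1=0} and \cite{Manolescu Gluing}, is the second step: making the finite-dimensional approximation compatible with the family structure over $\Pic(Y)$, i.e. verifying that the spectral section $\bP$ yields flows whose Conley indices behave functorially under change of cutoff exactly as in the $b_1 = 0$ case; once that compatibility is in place, the remaining verifications (boundedness, isolation, invariance) are parallel to the established arguments, modulo keeping track of the Picard-torus parameter throughout.
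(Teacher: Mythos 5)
Your proposal correctly identifies why the hypotheses $q_Y=0$ (hence $\Ind\bD_{\fc}=0$ in $K^1(\Pic(Y))$) and the existence of a Melrose--Piazza spectral section $\bP$ enter, and the overall finite-dimensional-approximation philosophy is the right ambient framework. However, you are describing a construction the paper does not carry out, and the route you sketch has a genuine gap at exactly the point the paper's machinery is designed to address.

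The central difficulty you gloss over is this. After quotienting the global Coulomb slice by the residual $H^1(Y;\Z)$ action, the configuration space is a Hilbert \emph{bundle} over $\Pic(Y)$, and the finite-rank approximation $V_\lambda^\mu$ built from $\bP$ is a finite-rank sub\emph{bundle} over $\Pic(Y)$, not a fixed vector space. It is generally nontrivial because of spectral flow of $D_{A_h}$ around loops in $\Pic(Y)$; the spectral section only controls the middle window of the spectrum and does not trivialize $\{V_\lambda^\mu(A_h)\}_{[h]}$. Consequently, a Conley index of your approximated flow over $\Pic(Y)$ would be a \emph{parametrized} Conley index over the Picard torus, and there is no canonical desuspension datum $(m,n)$ to make it an object of $\fC$ — an object of $\fC$ is a pointed $U(1)$-space with formal desuspensions by a fixed $\R^m\oplus\C^n$, not by a bundle over $\Pic(Y)$. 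You have replaced the original obstruction (no global spectral cutoff) by an equivalent one (no global desuspension datum) without showing how to remove it.

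The paper takes a different route that sidesteps this entirely. It works in the \emph{universal cover} of the Picard torus — i.e. in the slice $V$ with its periodic $H^1(Y;\Z)$-action — rather than in the quotient, fixes a single flat connection $A_0$ so that $V_\lambda^0(A_0,g,\bP)$ is a genuine vector space, chops $Str(R)\subset V$ into bounded cells $W_n^i$ via a ``transverse double system'' $(f_1,f_2)$ à la Kronheimer--Manolescu, takes Conley indices $J(W_n^i)$ of the induced flows there, and then \emph{assembles} them into $\SWF$ via a mapping cone. The spectral section is used not to define a global subbundle over $\Pic(Y)$, but to build the translation isomorphisms $\ff_j\colon J(W^i_n)\to J(W^i_{n+1})$: one moves $A_0$ to $A_0-2\sqrt{-1}h_j$ along a path, and the perturbed operators $D_{A_s}+B^{\bP}_{sh_j}$ are invertible so there is no spectral flow along the path, which is what makes the $L^2$-projections between the $V_\lambda^0(s)$ canonical. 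The commutativity of $\ff_i$ and $\ff_j$ up to canonical homotopy (Section 3.7), the choices of trivializations $\ft_\pm,\ft,\tilde{\tilde\ft}$, the iterated mapping cones for $N\geq 3$ and $b_1\geq 2$, and the independence statements (Propositions 3.10 and 3.11) form the bulk of the proof. None of this is present in your sketch — the words ``transverse double system'', ``cells $W_n^i$'', ``translation map $\ff$'', and ``mapping cone'' do not appear — and without them there is no construction of the object.

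Two smaller misreadings: the boundedness of the critical set holds only \emph{after} quotienting by $H^1(Y;\Z)$; in the slice $V$ where the paper actually works, the critical set is a $H^1(Y;\Z)$-periodic, noncompact subset, and the transverse double system is precisely the device for cutting it into compact pieces. And the submodule $H\subseteq H^1(Y;\Z)$ is not used to set up a slice — it replaces the generators $h_j$ by $m_j h_j$ in the transverse double system, and the largeness of $m_j$ matters only for the gluing theorem, not for the definition of $\SWF$.
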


In this paper we do not discuss how $\SWF(Y,\fc, H, g,\bP)$ depends on $g$ and $\bP$.

For a closed, oriented 4-manifold $X$ and a spin-c structure $\hat{\fc}$ on $X$, we have the invariant  $\Psi_{X,\hat{\fc}}$ which is an element of $\pi^{b^+(X)}_{U(1)}(\Pic(X);\Ind \bD_{\hat{\fc}})$ due to Bauer and Furuta \cite{BF}. Here $\pi^{b^+(X)}_{U(1)}(\Pic(X);\Ind \bD_{\hat{\fc}})$ is a $U(1)$-equivariant stable cohomotopy group of the Thom space of the index bundle of Dirac operators on $X$ parametrized by the Picard torus $\Pic(X)$.
Let $\psi_{X,\hat{\fc}}$ be the restriction of $\Psi_{X, \hat{\fc}}$ to the fiber of $\Ind \bD_{\hat{\fc}}$. 
We can generalize the invariant $\psi_{X, \hat{\fc}}$ to a 4-manifold with boundary.

\begin{thm}
Let $Y$ be a closed, oriented 3-manifold with $q_Y = 0$. Take a Riemannian metric $g$, a spin-c structure $\fc$ on $Y$ with $c_1(\fc)$ torsion, a submodule $H$ of $H^1(Y;\Z)$ of rank $b_1(Y)$ and a spectral section $\bP$ of $\bD_{\fc}$.
Let $X_1$ be a compact, oriented 4-manifold with $\partial X_1 = Y$, $\hat{g}_1$ be a Riemannian metric on $X_1$ with $\hat{g}_1|_{Y} = g$ and $\hat{\fc}_1$ be a spin-c structure on $X_1$ with $\hat{\fc}_1|_{Y} = \fc$. We can define a relative invariant $\psi_{X_1, \hat{\fc}_1, H, g, \bP}$ which is an element of a $U(1)$-equivariant stable homotopy group of $\SWF(Y,\fc, H, g, \bP)$.
\end{thm}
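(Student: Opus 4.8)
The plan is to generalize Manolescu's relative Bauer--Furuta construction to the $b_1 > 0$ setting, using the spectral section $\bP$ to organize the finite--dimensional approximation of the spinorial part so that it is compatible with the approximation used to build $\SWF(Y,\fc,H,g,\bP)$.

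First I would form the manifold with cylindrical end $X_1^+ = X_1 \cup_Y \bigl(Y \times [0,\infty)\bigr)$, with the metric $\hat{g}_1$ on $X_1$ and the product metric $g + dt^2$ on the end, and extend $\hat{\fc}_1$ over the end as the pullback of $\fc$. Write the Seiberg--Witten equations on $X_1^+$ in a double-Coulomb slice as a map $\ff \colon \cA \to \cB$ between suitable weighted Sobolev completions of $i\Omega^1 \oplus \Gamma(S^+)$ and $i\Omega^+ \oplus i\Omega^0 \oplus \Gamma(S^-)$, of the form $\ff = L + c$ with $L$ linear Fredholm once an Atiyah--Patodi--Singer-type boundary condition on $Y$ is imposed, and $c$ compact. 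Since $c_1(\fc)$ is torsion, finite-energy configurations decay exponentially on the end to the reducible locus modulo gauge, so the weighted spaces control the limit at $t = +\infty$ by the restriction to $Y \times \{0\}$; the submodule $H \subset H^1(Y;\Z)$ fixes the relevant slice of the Picard torus of $Y$, exactly as in the three--dimensional construction.

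Second, I would carry out finite--dimensional approximation: fix a large ball in $\cA$, choose cutoffs $-\lambda < 0 < \mu$, and project the form part of $\ff$ onto the span of eigenvectors of the relevant self-adjoint operator with eigenvalues in $(-\lambda,\mu)$; the spinorial part is projected using $\bP$, namely onto the portion of $\Gamma(S^+)$ cut off at $\mu$ for the $X_1^+$-Dirac operator with the boundary condition determined by $\bP$ on $Y$. This yields an approximated map $\widetilde{\ff}$ between finite-dimensional $U(1)$-representations whose "outgoing" part matches the data defining the approximate Chern--Simons--Dirac gradient flow used in $\SWF(Y,\fc,H,g,\bP)$. Using the a priori $C^0$ bound on Seiberg--Witten solutions on $X_1^+$ together with the exponential decay on the end, for $\lambda,\mu$ large the zero set of $\widetilde{\ff}$ lies in the interior of the ball, and the operation ``restrict to $Y \times \{0\}$, then flow for a large finite time under the approximate gradient'' carries a suitable index pair for $\widetilde{\ff}$ into an index pair for the isolated invariant set on $Y$ representing $\SWF(Y,\fc,H,g,\bP)$. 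Collapsing produces a $U(1)$-equivariant map from a representation sphere to a suitable (de)suspension of $\SWF(Y,\fc,H,g,\bP)$, hence the class $\psi_{X_1,\hat{\fc}_1,H,g,\bP}$ in a $U(1)$-equivariant stable homotopy group of $\SWF(Y,\fc,H,g,\bP)$. Independence of the auxiliary data (ball radius, cutoffs $\lambda,\mu$, Sobolev weight, index pairs) then follows from the standard stabilization and cofinality arguments: raising $\mu$ adds matching trivial-up-to-suspension factors on the $X_1^+$ and $Y$ sides, and different index pairs give canonically homotopic collapse maps.

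The main obstacle will be the matching compactness statement in the second step: showing, $U(1)$-equivariantly and compatibly with both $\bP$ and $H$, that the boundary restriction of (approximate) finite-energy solutions on $X_1^+$, propagated by the finite-time gradient flow on $Y$, enters and remains inside a fixed isolating neighborhood on $Y$. This requires combining the four-dimensional a priori estimates with the cylindrical exponential-decay estimates uniformly in the approximation parameters and, because $b_1(Y) > 0$, tracking carefully the flat part of the connection over the Picard torus and the spectral-section-dependent splitting of the spinor bundle near the reducibles --- precisely the phenomena that do not arise in the $b_1 = 0$ theory.
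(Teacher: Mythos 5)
Your proposal diverges from the paper's construction in two important ways, one a legitimate alternative route and one a genuine gap.

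On the route: you work on the cylindrical-end manifold $X_1^+ = X_1 \cup_Y (Y\times[0,\infty))$ with weighted Sobolev completions and exponential decay estimates, as in the earlier Kronheimer--Manolescu preprint and Manolescu's gluing paper. The paper instead stays on the compact manifold $X_1$ and uses Khandhawit's \emph{double Coulomb} gauge slice $\Omega^1_{\hat g_1}(X_1)$, pairing the Seiberg--Witten map with the boundary restriction $p^{\mu}i^*$ into a truncated spectral subspace $V^{\mu}$ of $D_{A_0}+B^{\bP}_0$, and applies Khandhawit's compactness theorem directly to the pair (interior solution, boundary trajectory). This avoids the weighted-Sobolev/decay analysis entirely; either route could in principle be made to work, but the paper's choice is deliberate and is what makes the uniform compactness statement (Lemma \ref{lem rel inv T_0}) tractable.

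The genuine gap is in your final step. You write that the boundary restriction followed by finite-time flow carries an index pair for $\widetilde{\ff}$ ``into an index pair for the isolated invariant set on $Y$ representing $\SWF(Y,\fc,H,g,\bP)$,'' and that collapsing yields the class. That works when $b_1(Y)=0$, where $\SWF$ is (a desuspension of) a single Conley index. In this paper, with $b_1(Y)>0$ and the transverse-double-system construction, $\SWF(Y,\fc,H,g,\bP)$ is \emph{not} a Conley index: it is a (possibly iterated) mapping cone $\Sigma^{-1}C(k)$ built from the Conley indices $J(W^i_{\bn})$ of the pieces $W^i_{\bn}$, glued using the attractor--repeller morphisms $k$ and the gauge-translation isomorphisms $\ff_j$. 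The boundary restriction of your finite-dimensional approximation lands naturally in the Conley index $J(\widetilde{W})$ of a large union $\widetilde{W}=W^1_{-n}\cup\cdots\cup W^2_n$; you still need to exhibit the morphism $J(\widetilde{W}) \to \SWF(Y,\fc,H,g,\bP)$. The paper constructs this via the identification $J(\widetilde{W})\cong\Sigma^{-1}C\bigl(k\colon J(W^1_{-n}\cup\cdots\cup W^1_n)\to\Sigma J(W^2_{-n}\cup\cdots\cup W^2_n)\bigr)$ together with wedge decompositions and the maps $\ff_1$, and verifies the required homotopy commutativity. Without this step the target of your collapse map is a Conley index of an auxiliary large isolating block, not the object $\SWF(Y,\fc,H,g,\bP)$ whose definition is the novel content of the paper, so the invariant is not yet defined.
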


Using the relative invariants, we can construct a gluing formula for $\psi_{X, \hat{\fc}}$.

\begin{thm} \label{thm gluing}
Let $Y$ be a closed, connected, oriented 3-manifold with $q_Y = 0$. (Note that we suppose that $Y$ is connected as in \cite{Mano errata}.) Take a Riemannian metric $g$ and a spin-c structure $\fc$ on $Y$ with $c_1(\fc)$ torsion, a submodule $H$ of $H^1(Y;\Z)$ generated by $\{ m_1 h_1, \dots, m_{b} h_{b} \}$ and a spectral section $\bP$ of $\bD_{\fc}$. Here $b = b_1(Y)$, $\{ h_1, \dots, h_{b} \}$ is a set of generators of $H^1(Y;\Z)$ and $m_j$ is a positive integer.
Let $X$ be a closed, oriented 4-manifold which has a decomposition $X = X_1 \cup_Y X_2$ for some compact oriented 4-manifolds $X_1$ and $X_2$ with boundary $Y$ and $-Y$. Suppose that we have a spin-c structure $\hat{\fc}$ on $X$ with $\hat{\fc}|_{Y} = \fc$ and that $m_j$ is sufficiently large for all $j$. Then we have
\[
           \psi_{X, \hat{\fc}} = 
             \eta \circ \left( \psi_{X_1, \hat{\fc}_1, H, g, \bP} \wedge
                              \psi_{X_2, \hat{\fc}_2, H, g, \bP} \right)             
\]
in the category $\fC$. Here $\hat{\fc}_j = \hat{\fc}|_{X_j}$ and $\eta$ is a S-duality morphism
\[
    \eta:\SWF(Y,\fc, g, \bP) \wedge \SWF(-Y, \fc, g, \bP) \rightarrow S^0.
\]

\end{thm}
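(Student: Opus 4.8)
The plan is to follow the template of Manolescu's gluing proof in \cite{Manolescu Gluing}, adapted to the $b_1(Y)>0$ setting with the spectral section $\bP$ playing the role of the extra choice that rigidifies the finite-dimensional approximation. First I would recall that all three invariants are defined via finite-dimensional approximation of the Seiberg-Witten equations: on $X$ one picks a large finite-dimensional subspace of the configuration space, truncates the monopole map, and obtains $\psi_{X,\hat\fc}$ as a stable $U(1)$-equivariant map between sphere-bundle Thom spaces; on $X_1$ and $X_2$ one does the same but with the extra data of the spectral section used to split the space of boundary spinor fields into a "finite" part (below the spectral section) and an infinite part, and the relative invariant is the induced map into $\SWF(Y,\fc,H,g,\bP)$, respectively out of $\SWF(-Y,\fc,H,g,\bP)$ after using Spanier-Whitehead duality. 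The S-duality morphism $\eta$ is, by construction of $\SWF$ for $\pm Y$, the pairing coming from the fact that the Conley index for $-Y$ is (up to suspension by the index data) the dual of the one for $Y$; this duality is where $q_Y=0$ and the spectral section enter, since they are what make $\Ind\bD_\fc$ trivial and the two approximations compatible.

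The key steps, in order, would be: (1) Choose a single large parameter — a cutoff level $\mu$ for eigenvalues of the relevant operators on $X$, on $X_1$, and on $X_2$ — compatibly, so that the finite-dimensional model for $X$ decomposes, along the neck, as a fiber product over the model for $Y$ of the models for $X_1$ and $X_2$; here one must take $m_j$ large enough that the relevant pieces of $H^1(Y;\Z)$ and the Picard tori line up and the spectral section truncation is "invisible" at level $\mu$. (2) Stretch the neck: replace $X$ by $X_1 \cup ([-T,T]\times Y) \cup X_2$ and, following the standard neck-stretching/gluing analysis (as in Kronheimer-Mrowka and Manolescu), show that for $T$ large the approximate Seiberg-Witten trajectories on $X$ are, up to controlled error, concatenations of a trajectory on $X_1$ asymptotic to a point in the (finite-dimensional model of the) critical set on $Y$, a nearly-constant segment along the neck, and a trajectory on $X_2$. (3) Translate this into the statement that the monopole map for $X$ factors, after finite-dimensional approximation, as the composite of $\psi_{X_1}\wedge\psi_{X_2}$ with the duality pairing $\eta$ — i.e. identify the fiber product of Conley indices with the smash product and the gluing map with $\eta$. (4) Check $U(1)$-equivariance and independence of the auxiliary choices ($\mu$, $T$, the approximation subspaces) so that the identity holds in $\fC$, not just for a particular model.

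The main obstacle I expect is step (2) together with the bookkeeping of index data in step (3): precisely, showing that the spectral-section-truncated boundary data on $X_1$ and on $X_2$ fit together to reconstruct the full (untruncated) linearized operator on $X$, up to a suspension by a bundle over $\Pic(X)$ that is exactly cancelled by the duality morphism $\eta$. Because $\Ind\bD_{\hat\fc}$ need not be trivial on $X$ even though $\Ind\bD_\fc$ is trivial on $Y$, one has to keep careful track of the index bundles over the Picard tori of $X_1$, $X_2$, $Y$ and their Mayer-Vietoris relation, and verify that the two spectral sections (for $Y$ from the $X_1$ side and for $-Y$ from the $X_2$ side) are mutually complementary so that their contributions to the finite-dimensional models are honestly dual. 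The analytic input — exponential decay of finite-energy monopoles on the cylinder toward the critical set, and the surjectivity/transversality statements needed to make the Conley-index gluing map well defined and a homotopy equivalence onto the relevant subset — is standard once $c_1(\fc)$ torsion and $q_Y=0$ guarantee the critical set sits in a compact piece, but assembling it uniformly in the family over $\Pic(Y)$ is the technical heart of the argument.
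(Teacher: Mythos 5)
Your proposal captures the general flavor of Manolescu's gluing argument but misses the specific route this paper takes, and it glosses over the part that is actually new here.

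The paper's proof splits cleanly in two. The original technical content is Proposition \ref{prop y_1 - y_2}: a finite-dimensional, explicit homotopy showing that $\Sigma^2 \Sigma^{V_\lambda^\mu}\eta \circ (\psi_{X_1}\wedge\psi_{X_2})$ is $U(1)$-homotopic to the concrete map $(\hat x_1,\hat x_2)\mapsto\bigl(\prod_j\pr_{U_j}sw(\hat x_j),\, y_1-y_2\bigr)$ where $y_j=p_\lambda^\mu i^*\hat x_j$ are the restricted boundary data. This is not a neck-stretching argument. It is proved by writing out $\eta$ using the explicit attractor-repeller Conley-index duality machinery built in Section 2 (in particular the homotopy $H = H^1\ast H^2\ast H^3\ast H^4$ and Lemma \ref{lem CD duality}), and then running a homotopy $H_u$ that contracts the flow-time parameter $T$ to $0$; well-definedness of $H_u$ comes from compactness of monopoles on the closed $X$, not from a neck-stretching limit. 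The second half of the proof — comparing the boundary-difference map to $\psi_{X,\hat\fc}$ by stretching the neck — is not re-proved at all but cited from \cite[Section 4]{Manolescu Gluing} together with \cite{Mano errata}. Your steps (2)--(3) are essentially that citation; you are re-deriving the borrowed part while not articulating the part that the paper actually proves. Your phrase ``identify the fiber product of Conley indices with the smash product and the gluing map with $\eta$'' gestures at Proposition \ref{prop y_1 - y_2}, but the mechanism (an explicit Spanier-Whitehead duality pairing built out of index pairs $(N,L)$, $(N,\oL)$ with $\partial N = L\cup\oL$, followed by a flow-time-contracting homotopy) is not a fiber-product identification and cannot be reconstructed from what you write.

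Two further mismatches. First, your concern about index bundles over $\Pic(X_1)$, $\Pic(X_2)$, $\Pic(Y)$ and their Mayer-Vietoris relation, and about two ``mutually complementary'' spectral sections, is off target: the theorem is stated for the fiber restriction $\psi_{X,\hat\fc}$, not the family invariant $\Psi_{X,\hat\fc}$, and only one spectral section $\bP$ for $Y$ appears — the duality with the $-Y$ side is supplied by the inverse flow and the Conley-index constructions of Section 2.3, not by a second complementary spectral section. Second, you do not articulate the actual role of taking $m_j$ large. It is not to make the spectral section ``invisible''; it is to guarantee that the image $p_\lambda^\mu\bigl(i^*(B(U_j',R_j'))\bigr)$ of the approximate boundary data lands inside a single compact piece $\widetilde W = W^1_n\cup W^2_n$ of the strip, so that a regular index pair $(N,L)$ containing it can be chosen and the Conley-index description of $\psi_{X_j}$ applies (Lemma \ref{lem N L} and Lemma \ref{lem rel inv T_0}). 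Without isolating this, the passage from the relative invariants to the duality pairing is not justified.
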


\noindent
{\bf Acknowledgements.}
The author is grateful to Mikio Furuta for his suggestions. The author would like to thank Tirasan Khandhawit for information about the double Coulomb condition, and Yukio Kametani and Nobuhiro Nakamura for useful conversations. The author also would like to thank Ciprian Manolescu for sharing \cite{Mano errata} with the author.

\section{Conley index, mapping cone and duality}

\subsection{Conley index}
Let $\gamma$ be a smooth flow on a finite dimensional manifold $Z$. That is, $\gamma$ is a smooth map
\[
   \begin{array}{rcl}
      \gamma: Z \times \R & \rightarrow & Z  \\
                 (z, T)   & \mapsto     & \gamma(z,T) = z \cdot T
   \end{array}
\]
such that
\[
     z \cdot 0 = z, \quad z \cdot (T+T') = (z \cdot T) \cdot T'.
\]
For each subset $B \subset Z$, the maximal invariant set $\Inv(B)$ in $B$ is given by
\[
    \Inv(B) = \{ \ z \in Z \ | \ z \cdot \R \subset B \ \}.
\]
If $B \cdot \R \subset B$, $B$ is called an invariant set.

Let $S$ be a compact invariant set in $Z$. If there is a compact neighborhood $N$ of $S$ in $Z$ with $S = \Inv (N)$, then we say that $S$ is an isolated invariant set, and $N$ is called an isolating neighborhood of $S$.

\begin{fact}[\cite{Conley Isolated, Sala}]
Let $S$ be an isolated invariant set and $U$ be a neighborhood of $S$ in $Z$. There is a pair $(N, L)$ with the following properties:

\begin{enumerate}
   \item
    $N$ and $L$ are compact subspaces of $Z$ with $L \subset N$.

    \item
    $N$ is an isolating neighborhood of $S$ with $N \subset U$.
    
    \item
    Take any point $z \in N$. If  $z \cdot T_0 \not \in N$ for some $T_0 > 0$, there is a  positive number $T$ with $0 < T < T_0$ such that $z \cdot T \in L$.
    
    \item
    $L$ is positively invariant. That is, for $z \in L$ and $T > 0$ suppose that $z \cdot [0, T] \subset N$. Then $z \cdot [0, T] \subset L$.

\end{enumerate}

The pair $(N, L)$ is called an index pair of $S$.

\end{fact}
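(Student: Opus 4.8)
From any isolating neighborhood $N'$ of $S$ I would pass to a compact isolating neighborhood $A$ of $S$ with $S\subset\Int A$ and $A\subset U$: take $A$ to be a compact neighborhood of $S$ contained in $\Int(N')\cap U$; then $\Inv(A)\subset\Inv(N')=S\subset\Inv(A)$, so $\Inv(A)=S$, and $S\subset\Int A$ automatically. The one nontrivial input at this stage is a \emph{uniform escape time}: there is $T>0$ such that any $x\in A$ with $x\cdot[-T,T]\subset A$ already lies in $\Int A$. This is proved by contradiction and compactness --- a sequence $x_n\in\partial A$ with $x_n\cdot[-n,n]\subset A$ subconverges to a point of $\partial A$ whose entire orbit lies in $A$, i.e.\ to a point of $\Inv(A)=S$, which is impossible since $S\cap\partial A=\emptyset$.

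\textbf{Stage 2: build $(N,L)$ from finite-time escape data.} I would produce both $N$ and $L$ inside $A$, using only orbit segments of length at most (a fixed multiple of) $T$. The guiding picture: $N$ should consist of points that have stayed in $A$ over a backward time window, so that $N$ is again a compact isolating neighborhood of $S$ with $S\subset\Int N$ and $N\subset U$; and $L$ should be the \emph{thick} exit set, roughly the points of $N$ whose forward orbit is committed to leaving $\Int A$ within time $T$. With definitions along these lines, the tube lemma makes $L$ closed (hence compact) and makes $N\setminus L$ sit in the interior of $A$, which gives properties (1) and (2) and $S\cap L=\emptyset$. Property (3) --- an escaping orbit exits through $L$ --- is then obtained by letting $\sigma$ be the last time in $[0,T_0]$ at which the orbit of $z$ is still in $N$ and checking, via the uniform escape time and a maximality argument, that $z\cdot\sigma\in L$ with $0<\sigma<T_0$. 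Property (4) --- positive invariance of $L$ in $N$ --- reflects the fact that commitment to leaving $\Int A$ within a fixed time window is preserved along the orbit as long as one stays in $N$.

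\textbf{Where the work is.} The genuine difficulty is choosing the precise windows defining $N$ and $L$ so that all four properties hold simultaneously: $L$ must be closed, which forces the ``thick'' formulation in terms of $\Int A$ and a finite time rather than the naive exit set (which is only open in $N$); it must be positively invariant in $N$; and it must be hit by every orbit that leaves $N$. The underlying obstruction is that the escape-time functions $x\mapsto\sup\{t\geq 0:x\cdot[0,t]\subset A\}$ are only semicontinuous, so the natural sets are not automatically of the right topological type; the uniform escape time of Stage 1 (and, if needed, taking closures or using slightly asymmetric forward/backward windows) is exactly what repairs this. An alternative that trades this bookkeeping for a differential-topological argument uses that $\gamma$ is a \emph{smooth} flow: smooth $\partial A$ and perturb $A$ slightly so that the generating vector field is transverse, or externally tangent, to $\partial A$ everywhere --- an isolating block --- and take $L$ to be its exit boundary; then the crux becomes the general-position argument that eliminates internal tangencies.
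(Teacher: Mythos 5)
The paper does not prove this statement; it is labeled as a \emph{Fact} and cited directly to Conley's CBMS monograph and Salamon's article, so there is no in-paper argument to compare your proposal against. Judged on its own, your Stage 1 is correct and complete: passing to a compact isolating neighborhood $A\subset U$ with $S\subset\Int A$ is routine, and your compactness argument for the uniform escape time --- that some $T>0$ exists with $x\cdot[-T,T]\subset A\Rightarrow x\in\Int A$ --- is exactly right (a subsequential limit of $x_n\in\partial A$ with $x_n\cdot[-n,n]\subset A$ would lie in $\Inv(A)\cap\partial A=S\cap\partial A=\emptyset$). You have also correctly identified the two standard routes for Stage 2: the finite-time-window construction of Conley type, and the isolating-block/transversality construction that is available because the flow here is smooth on a finite-dimensional manifold.

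The gap is that Stage 2 is never actually carried out. Phrases like ``$N$ should consist of points that have stayed in $A$ over a backward time window'' and ``$L$ should be the thick exit set, roughly the points \dots committed to leaving $\Int A$ within time $T$'' describe intent, not definitions, and your own closing paragraph concedes that ``the genuine difficulty is choosing the precise windows defining $N$ and $L$ so that all four properties hold simultaneously'' without then making the choice. That choice is the entire content of the theorem. For instance, the natural candidates $N=\{x\in A: x\cdot[-T,0]\subset A\}$ and $L=\{x\in N: x\cdot[0,T]\not\subset\Int A\}$ do give compactness of both sets, $S\subset\Int N$, and $\Inv(N)=S$, but verifying positive invariance of $L$ requires a case analysis invoking the uniform escape time at the point where the orbit first touches $\partial A$, and property (3) needs care at the boundary case where the supremum of times with $z\cdot[0,t]\subset N$ is $0$ --- exactly the semicontinuity issue you flag. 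A referee would not accept ``with definitions along these lines'' followed by an acknowledgment that the hard part is unresolved; you need to write down $N$ and $L$ explicitly and verify (1)--(4) for them, or else fully execute the isolating-block alternative including the general-position argument removing internal tangencies.
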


The choice of index pair $(N, L)$ of $S$ is not unique, however, the homotopy type of the pointed space $(N/L, [L])$ is unique up to canonical homotopy equivalence. Let $(N', L')$ be another index pair of $S$. We can define a homotopy equivalence $(N/L, [L]) \rightarrow (N'/L', [L'])$ as follows. Take a large positive number $T_0$ such that for any $T > T_0$ we have
\[
  \begin{split}
  &  z \cdot [-T, T] \subset N \backslash L \Rightarrow z \in  N' \backslash L', \\
  &  z \cdot [-T, T] \subset N' \backslash L' \Rightarrow z \in N \backslash L.
  \end{split}
\]
For $T > T_0$ define
\begin{equation} \label{eq f_T}
   \begin{array}{rrcl}
      f_T: &  N/L & \rightarrow & N'/L' \\
           &   z  & \mapsto & 
        \left\{
          \begin{array}{ll}
             z \cdot 3T & \text{if $z \cdot [0, 2T] \subset N \backslash L$, 
                            $z \cdot [T, 3T] \subset N' \backslash L'$,} \\
             * & \text{otherwise.}
          \end{array}
        \right.
   \end{array}
\end{equation}
Then we can see that $f_T$ is well defined, continuous and a homotopy equivalence from $(N/L, [L])$ to $(N'/L', [L'])$. See \cite[Section 4]{Sala} for details.

\begin{dfn}
Let $S$ be an isolated invariant set in $Z$ and $(N, L)$ be an index pair of $S$. We define the Conley index $I(S)$ of $S$ to be the homotopy type of $(N/L, [L])$.
\end{dfn}

\subsection{Attractor-repeller sequence} \label{subsec attractor}

Let $S$ be an isolated invariant set. A compact subset $A$ of $S$ is called an attractor in $S$ if there is a compact neighborhood $U$ of $A$ in $S$ with $A = \omega(U)$, and $A$ is called an repeller if $A = \omega^* (U)$. Here
\[
   \begin{split}
     &\omega(U) = \Inv ( \Cl(U \cdot [0, \infty)))
                = \bigcap_{T > 0} \Cl ( U \cdot [T, \infty) ), \\
     &\omega^*(U) = \Inv ( \Cl(U \cdot (-\infty,0]))
                  = \bigcap_{T < 0} \Cl ( U \cdot (-\infty, T] ).
   \end{split}
\]
For any $B \subset Z$, $\Cl(B)$ stands for the closure of $B$ in $Z$.

Let $A$ be an attractor in $S$ and put $A^* = \{ z \in S | \omega(z) \cap A = \emptyset \}$. Then $A^*$ is a repeller, called the complementary repeller of $A$. The pair $(A, A^*)$ is called an attractor-repeller pair in $S$.
We will construct index pairs for $S, A$ and $A^*$, following \cite[Section 3.2]{Cornea Homotopical}.  Let $S_1$ be the maximal attractor in $Z \backslash S$.
Let $S_2$ be  the set that consists of points on $A$, $S_1$ and trajectories in $Z$ originating at $A$. We can see that $S_2$ is also an attractor in $Z$. Lastly let $S_3$ be the set that consists of points on $S_2$, $A^*$ and trajectories in $S$ originating at $A^*$. Then $S_3$ is an attractor in $Z$. Denote by $R_j$ the complementary repeller of $S_j$ in $Z$.
We can take a Lyapunov function $f_j$ associated with $(S_j, R_j)$. (See p. 33 in \cite{Conley Isolated}.) That is, $f_i$ is a continuous function $Z \rightarrow [0,1]$ such that
\[
   \begin{split}
      &  f_{j}^{-1}(0) = S_j,  \\
      &  f_{j}^{-1}(1) = R_j \ and \\
      &  \text{$f_j$ is strictly decreasing on orbits in $Z \backslash (S_j \cup R_j)$.}
   \end{split}
\]
Take a real number $a_j \in (0, 1)$ for $j = 1, 2, 3$. Since $R_2 \subset R_1$, we can assume that
\[
     \{ \ z \in Z \ | \ f_2(z) \geq a_2 \ \} \subset
     \{ \ z \in Z \ | \ f_1(z) \geq a_1 \ \}. 
\]
Put
\begin{equation} \label{eq index pairs}
 \begin{split}
  N_S &:= \{ \ z \in Z \ | \ f_1(z) \geq a_1, \ f_3(z) \leq a_3 \ \}, \\
  L_S &:= f_1^{-1}(a_1) \cap N_{S}, \\
  N_A &:= \{ \ z \in Z \ | \ f_1(z) \geq a_1, \ f_2(z) \leq a_2, \ f_3(z) \leq a_3 \}, \\
  L_A &:= f_1^{-1}(a_1) \cap N_A \ (= L_S), \\ 
  N_{A^*} &:= \{ z \in Z | \ f_2(z) \geq a_2, \ f_3(z) \leq a_3 \ \}, \\
  L_{A^*} &:=  f_2^{-1}(a_2) \cap N_{A^*}.
 \end{split}
\end{equation}
We can see that $(N_S, L_S), (N_A, L_A)$ and $(N_{A^*}, L_{A^*})$ are index pairs for $S, A$ and $A^*$ respectively. Since $N_A \subset N_S$ and $L_A = L_S$, we have the inclusion
\[
    I(A) = N_A / L_A \stackrel{i}{\longrightarrow} I(S) = N_S / L_S.
\]
Note that we have a natural identification
\[
     N_{A^*}/L_{A^*} = N_S / N_A.
\]
Therefore we have the projection
\[
    I(S) = N_S / L_S = N_S / L_A \stackrel{j}{\longrightarrow} I(A^*) = N_S / N_A.
\]
Next we define a map
\[
      k:I(A^*) \longrightarrow \Sigma I(A).
\]
Here $\Sigma I(A)$ is the suspension of $I(A)$. For a topological space $W$ with base point $w_0$, the suspension of $W$ is defined by the following:
\[
    \Sigma W = [0,1] \times W / 
      \{ 0 \} \times W \cup  \{ 1 \} \times W \cup [0, 1] \times \{ w_0 \}.
\]
Define a function $s' = s'_{A^*}:N_{A^*} \rightarrow [0, \infty]$ by
\[
  s'(z) = \sup \{ \ T \geq 0 \ | \ z \cdot [0, T] \subset N_{A^*} \ \}
\]
and put
\begin{equation} \label{eq def s}
   s(z) = s_{A^*}(z) = \min \{ s'(z),  1 \}.
\end{equation}
By Lemma 5.2 of \cite{Sala}, the function $s$ is continuous. 
Define
\[
     \begin{array}{rrcl}
       k: & I(A^*) & \rightarrow & \Sigma I(A) \\
          &  z   & \mapsto     & (1-s(z), z \cdot s(z)).
     \end{array}
\]
We can see that $k$ is a well-defined and continuous map.
Thus we have a sequence
\begin{equation}  \label{eq ext tri}
   I(A) \stackrel{i}{\longrightarrow} I(S) 
        \stackrel{j}{\longrightarrow} I(A^*)
        \stackrel{k}{\longrightarrow} \Sigma I(A)
        \stackrel{\Sigma i}{\longrightarrow} \Sigma I(S)
        \stackrel{\Sigma j}{\longrightarrow} \cdots
\end{equation}
It is well known that this sequence is exact (\cite{Conley Isolated, Sala}) .  To see the exactness of the sequence, we will construct a homotopy equivalence from $\Sigma I(S)$ to $C(k)$  explicitly. Here $C(k)$ is the mapping cone of $k$. In general, for a continuous map $f:V \rightarrow W$ between topological spaces $V$ and $W$ with base points $v_0$ and $w_0$, the mapping cone $C(f)$ is defined by the following:
\begin{gather*}
  C(f) = [0, 1] \times V \coprod W / \sim , \\
   (1, v) \sim w_0, \ [0, 1] \times \{ v_0 \} \sim w_0, \ (0, v) \sim f(v) \
   (v \in V).   
\end{gather*}
Define a function $a:(0, 1] \times [0, 1] \rightarrow [0, 1]$ by
\[
     a(s, t) = 
        \left\{
          \begin{array}{cl}         
            \frac{t}{s} + 1 - \frac{1}{s} & \text{if $1-s \leq t \leq 1$}, \\
                            0             & \text{otherwise}.
          \end{array}  
        \right.
\]
Then we define $\varphi:\Sigma I(S) \rightarrow C(k) = C(I(A^*)) \cup_k \Sigma I(A)$ by
\begin{equation} \label{eq varphi}
    \varphi(t, z) =
       \left\{
         \begin{array}{ll}
          (a(s(z), t), z) \in C(I(A^*)) & \text{if $1-s(z) \leq t \leq 1$,} \\
          (t, z \cdot s(z)) \in \Sigma I(A) & \text{if $0 \leq t \leq 1-s(z)$.}
            \end{array}
          \right.
\end{equation}
Here we think of $s=s_{A^*}$ as a function $N_{S} = N_{A^*} \cup N_A \rightarrow [0,1]$ by putting $s(z) = 0$ for $z \in N_A$. We can easily prove the following.

\begin{lem}
The map $\varphi$ is well defined and continuous.
\end{lem}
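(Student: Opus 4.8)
The plan is to verify everything upstairs on $[0,1]\times N_S$ and then descend. Since $I(S)=N_S/L_S$ and $\Sigma I(S)$ is a further quotient of $[0,1]\times I(S)$, there is a continuous surjection $q\colon[0,1]\times N_S\to\Sigma I(S)$; as $[0,1]\times N_S$ is compact and $\Sigma I(S)$ and $C(k)$ are compact Hausdorff, it suffices to produce a continuous map $\tilde\varphi\colon[0,1]\times N_S\to C(k)$ given by the two branches of \eqref{eq varphi} and to check that it is constant on the fibres of $q$; then $\varphi$ is the induced map and is automatically continuous.

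First I would record a few facts about $s=s_{A^*}$, extended by $0$ over $N_A$ as in the text. Because $f_3$ is non-increasing along orbits, a forward orbit starting in $N_{A^*}=\{f_2\geq a_2,\ f_3\leq a_3\}$ can leave $N_{A^*}$ only through the locus $\{f_2=a_2\}$; in particular $s'\equiv0$ on $L_{A^*}$, so $\{s=0\}=N_A$, $s$ agrees along $N_A\cap N_{A^*}=L_{A^*}$, and, being continuous on each of the closed sets $N_{A^*}$ (Lemma~5.2 of \cite{Sala}) and $N_A$, the extended $s$ is continuous on $N_S$. The same exit analysis shows that when $0<s(z)<1$ the point $z\cdot s(z)$ lies on $\{f_2=a_2,\ f_3\leq a_3\}$, and then $f_1(z\cdot s(z))\geq a_1$ because the arrangement $R_2\subset R_1$ gives $\{f_2\geq a_2\}\subset\{f_1\geq a_1\}$; hence $z\cdot s(z)\in N_A$. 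Thus the second branch of \eqref{eq varphi} genuinely takes values in $I(A)=N_A/L_A$ whenever $s(z)<1$, and for $s(z)=1$ it is used only at $t=0$, where its value is the collapsed point of $\Sigma I(A)$.

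Next I would prove that $\tilde\varphi$ is continuous using the gluing lemma. The sets $X_+=\{(t,z):1-s(z)\leq t\}$ and $X_-=\{(t,z):t\leq1-s(z)\}$ are closed, cover $[0,1]\times N_S$, and meet in $\{t=1-s(z)\}$. On $X_-$ the branch $(t,z)\mapsto(t,\,z\cdot s(z))$ is a composite of continuous maps into $\Sigma I(A)$. On $X_+$ the branch $(t,z)\mapsto(a(s(z),t),\,[z])\in C(I(A^*))$ is continuous away from the points $(1,z)$ with $s(z)=0$, where $a$ is not defined; but there $z\in N_A$, so $[z]$ is the base point of $I(A^*)=N_S/N_A$, and if $(t_n,z_n)\to(1,z)$ within $X_+$ then $[z_n]\to\ast$, so by the tube lemma the image points eventually enter any prescribed neighbourhood of the base point of $C(k)$, regardless of their first coordinate. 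Finally, on the overlap one computes $a(s(z),1-s(z))=0$, so the $X_+$ formula returns $(0,[z])\in C(I(A^*))$, which in the mapping cone is identified with $k([z])=(1-s(z),\,z\cdot s(z))\in\Sigma I(A)$, the value of the $X_-$ formula at $t=1-s(z)$; so the two branches agree and $\tilde\varphi$ is continuous.

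It remains to see that $\tilde\varphi$ descends, i.e.\ that it sends each of the subsets collapsed in $\Sigma I(S)$ to the base point of $C(k)$. For $t=0$ the $X_-$ branch gives $(0,z\cdot s(z))$, collapsed in $\Sigma I(A)$; for $t=1$ the $X_+$ branch gives $(a(s(z),1),[z])=(1,[z])$, the cone point of $C(k)$ (and if $s(z)=0$ the $X_-$ branch gives $(1,z\cdot0)$, again collapsed in $\Sigma I(A)$); and for $z\in L_S$, since $L_S=L_A\subset N_A$ one has $s(z)=0$ and $z$ represents the base point of $I(A)$, so $\tilde\varphi(t,z)=(t,z)$ is the base point of $\Sigma I(A)$. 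Hence $\tilde\varphi$ is constant on the fibres of $q$ and induces the map $\varphi$. I expect the one genuinely delicate point to be the continuity of the $X_+$ branch as $s(z)\to0^+$: the auxiliary function $a(s,t)$ really is discontinuous there, and one has to argue that this is harmless because the $I(A^*)$-coordinate degenerates simultaneously to the base point and is absorbed by the identifications in the mapping cone; everything else is routine checking.
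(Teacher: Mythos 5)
Correct. The paper gives no proof here (it is stated as an easy check), and your argument supplies the intended verification: you establish continuity of the extended $s$ by gluing its restrictions to the closed sets $N_{A^*}$ and $N_A$, locate $z\cdot s(z)$ in $N_A$ via the exit analysis and $R_2\subset R_1$, match the two branches along $\{t=1-s(z)\}$ using $a(s,1-s)=0$ and the identification $(0,z)\sim k(z)$ in $C(k)$, and dispose of the only delicate point — continuity at $(1,z)$ with $s(z)=0$, where $a$ is undefined — by noting that $[z]$ is then the base point of $I(A^*)=N_S/N_A$ and invoking the tube lemma so that proximity of $[z_n]$ to the base point absorbs the wild first coordinate.
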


Next we prove that $\varphi$ is a homotopy equivalence.

\begin{lem} \label{lem homotopy eq}
The map $\varphi$ is a homotopy equivalence. Moreover the following diagram is  homotopy commutative:
\begin{equation} \label{eq hom comm}
\begin{CD}
I(A^*) @>{k}>> \Sigma I(A) @>{\Sigma i}>> \Sigma I(S) 
                              @>{\Sigma j}>> \Sigma I(A^*) \\
@V{\id}VV @V{\id}VV @V{\varphi}VV @V{\id}VV \\
I(A^*) @>{k}>> \Sigma I(A) @>{i'}>> C(k) @>{p'}>> \Sigma I(A^*).
\end{CD}
\end{equation}
Here $i'$ and $p'$ are the inclusion and projection respectively.
\end{lem}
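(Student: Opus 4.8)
The plan is to produce an explicit homotopy inverse $\psi\colon C(k)\to\Sigma I(S)$ from the flow and to read off the homotopy commutativity of \eqref{eq hom comm} from the same formulas. I would dispose of the homotopy commutativity first, as it is mostly a matter of unwinding \eqref{eq varphi}. The left square commutes on the nose; for the middle square, if $z\in N_A$ then $s(z)=0$ and $z\cdot 0=z$, so $\varphi(t,z)=(t,z)\in\Sigma I(A)\subset C(k)$ for all $t$, i.e.\ $\varphi\circ\Sigma i=i'$ strictly. For the right square I would compare $p'\circ\varphi$ with $\Sigma j$: both collapse the locus $\{(t,z):z\in N_A\}$, and on its complement $p'\varphi(t,z)=\bigl(a(s(z),t),[z]\bigr)$ whereas $\Sigma j(t,z)=\bigl(t,[z]\bigr)$ in $\Sigma I(A^*)$; since for fixed $z$ the map $t\mapsto a(s(z),t)$ is the increasing affine reparametrization of $[1-s(z),1]$ onto $[0,1]$ (and $0$ for $t\le 1-s(z)$), a fibrewise straight-line homotopy in the suspension coordinate connects them, continuous because $s$ is (Lemma~5.2 of \cite{Sala}).

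For the homotopy inverse, the geometric input is that $N_S=N_A\cup N_{A^*}$ and that a point of $N_{A^*}$ which ever leaves $N_{A^*}$ then lies in $N_A$ forever, so the forward-flow time governs the transition (cf.\ \cite[Section~3.2]{Cornea Homotopical}). I would put $\psi=\Sigma i$ on $\Sigma I(A)\subset C(k)=C(I(A^*))\cup_k\Sigma I(A)$ (the map induced by the inclusion of pairs $(N_A,L_A)\subset(N_S,L_S)$), and on the cone $C(I(A^*))$ set
\[
  \psi(r,z)=\bigl(\,1-(1-r)s(z)\,,\ z\cdot\bigl((1-r)s(z)\bigr)\,\bigr),\qquad z\in N_{A^*},
\]
which at $r=0$ is $k(z)=(1-s(z),z\cdot s(z))$ read in $\Sigma I(S)$ — so that $\psi$ is well defined — and at $r=1$ is the basepoint.

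Next I would verify that $\psi\circ\varphi$ and $\varphi\circ\psi$ are homotopic to the respective identities, both homotopies being taken rel the part over $N_A$, where the composites are already identities by the computation above. In the direction $\psi\circ\varphi$, substituting $a$ collapses everything to the uniform formula $\psi\varphi(t,z)=\bigl(t,\,z\cdot\min\{1-t,s(z)\}\bigr)$, and $(\tau,(t,z))\mapsto\bigl(t,\,z\cdot\bigl((1-\tau)\min\{1-t,s(z)\}\bigr)\bigr)$ is the desired homotopy to the identity. The direction $\varphi\circ\psi$ is analogous but the expression for $\varphi\psi$ on the cone is branch-dependent (according to whether $a(s(z),\cdot)$ lands above or below the exit-time graph), and one deforms it to the identity rel the base $k(I(A^*))$ and the apex; all the continuity involved is of the same nature as that of the maps $f_T$ in \eqref{eq f_T} and of $\varphi$.

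The step I expect to be the main obstacle is exactly this last piece of bookkeeping: checking that $\psi$ and the two homotopies are well defined, continuous and pointed across the loci where the piecewise formulas switch branch — $\{s(z)=0\}$, $\{s(z)=1\}$, $\{z\cdot s(z)\in L_A\}$ and their backward analogues — which are precisely where orbits meet the exit sets $L$ and where the exact form of the index pairs \eqref{eq index pairs} is used. Structurally the result should not be a surprise: since $L_A=L_S$ and $N_A\subset N_S$, the map $i$ is a cofibration, so the Puppe construction identifies $C(i)$ with $N_S/N_A=I(A^*)$ and produces a connecting map $I(A^*)\to\Sigma I(A)$ whose mapping cone is $\Sigma I(S)$; the content of the lemma is that this connecting map is homotopic to the flow-defined $k$ and that the explicit $\varphi$ realizes the resulting equivalence compatibly with \eqref{eq hom comm}.
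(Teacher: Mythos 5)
Your proposal follows essentially the same route as the paper: define an explicit homotopy inverse $\psi\colon C(k)\to\Sigma I(S)$ and verify $\psi\varphi\sim\id$, $\varphi\psi\sim\id$, and the commutativity of \eqref{eq hom comm} by hand. The only difference is cosmetic — on the cone $C(I(A^*))$ you take $\psi(r,z)=\bigl(1-(1-r)s(z),\,z\cdot((1-r)s(z))\bigr)$, whereas the paper's \eqref{eq psi} keeps the second coordinate fixed, $\psi(t,z)=\bigl(1-(1-t)s(z),\,z\cdot s(z)\bigr)$ (making the composite $\psi\varphi(t,z)=(t,z\cdot s(z))$ a touch simpler); both formulas work and lead to the same straight-line/flow-time homotopies, and your verification of the three squares and of $\psi\varphi\sim\id$ is correct.
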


\begin{proof}
Define $\psi:C(k) \rightarrow \Sigma I(S)$ by
\begin{equation} \label{eq psi}
    \psi(t, z) = 
      \left\{
        \begin{array}{ll}
          (1 - (1-t) s(z), z \cdot s(z)) & \text{if $(t, z) \in C(I(A^*))$,} \\
          (t, z)                         & \text{if $(t, z) \in \Sigma I(A)$}.
        \end{array}
      \right.
\end{equation}
This is a well-defined and continuous map.
It is easy to see that $\psi \circ \varphi \sim \id$ and $\varphi \circ \psi \sim \id$. We can also see that the above diagram is homotopy commutative.
\end{proof}
    
Since the second row in (\ref{eq hom comm}) is exact,  we obtain:

\begin{cor}[\cite{Conley Isolated, Sala}]
The sequence (\ref{eq ext tri}) is exact.
\end{cor}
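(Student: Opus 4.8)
\emph{Proof strategy.} The corollary is designed to be a formal consequence of Lemma~\ref{lem homotopy eq}, so the plan is to feed that lemma into the standard homotopy-theoretic machinery rather than to do any new work. First I would observe that the bottom row of (\ref{eq hom comm}), continued by iterated suspension,
\[
   I(A^*)\stackrel{k}{\longrightarrow}\Sigma I(A)\stackrel{i'}{\longrightarrow}C(k)\stackrel{p'}{\longrightarrow}\Sigma I(A^*)\longrightarrow\cdots,
\]
is precisely the Puppe cofibre sequence of the map $k$; by Puppe's theorem it is exact, in the sense that applying $[\,-\,,W]$ to it yields a long exact sequence of pointed sets for every pointed space $W$.

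Next, Lemma~\ref{lem homotopy eq} supplies a homotopy-commutative ladder from the relevant segment of (\ref{eq ext tri}) to this Puppe sequence, whose vertical maps $\id,\id,\varphi,\id$ are all homotopy equivalences. Applying $[\,-\,,W]$ turns this into a ladder of sequences of pointed sets with bijective vertical maps, so the top row
\[
   I(A^*)\stackrel{k}{\longrightarrow}\Sigma I(A)\stackrel{\Sigma i}{\longrightarrow}\Sigma I(S)\stackrel{\Sigma j}{\longrightarrow}\Sigma I(A^*)
\]
is exact as well; equivalently, $\varphi$ exhibits $\Sigma I(S)$ as the mapping cone of $k$. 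This already gives exactness of (\ref{eq ext tri}) at $\Sigma I(A)$ and at $\Sigma I(S)$, and applying $\Sigma^{n}$ to the ladder for each $n\geq 0$, together with the fact that suspension carries mapping cones to mapping cones, propagates exactness to every suspended position. For the three initial positions I would invoke the complementary standard facts already visible in Section~\ref{subsec attractor}: the map $i$ is induced by the closed inclusion $N_A\hookrightarrow N_S$, which is a cofibration in this finite-dimensional setting, and one has $I(S)/I(A)=N_S/N_A=N_{A^*}/L_{A^*}=I(A^*)$; hence $I(A)\stackrel{i}{\longrightarrow}I(S)\stackrel{j}{\longrightarrow}I(A^*)\stackrel{k}{\longrightarrow}\Sigma I(A)$ agrees up to homotopy with the head of the Puppe sequence of $i$. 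Assembling these observations, every consecutive triple in (\ref{eq ext tri}) is a cofibre sequence, which is the asserted exactness.

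I do not expect a genuine obstacle in the corollary itself: the substance has already been spent on the explicit construction of $\varphi$ in Lemma~\ref{lem homotopy eq}. The only points requiring mild care are the two standard inputs above---Puppe's theorem on cofibre sequences, and the cofibration property of the Conley-index pairs (that $N_A\hookrightarrow N_S$ is a cofibration and the quotients involved are well behaved)---the latter being exactly what is being attributed to \cite{Conley Isolated, Sala}.
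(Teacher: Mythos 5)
Your proposal is correct and takes essentially the same approach as the paper: the corollary is deduced from the exactness of the Puppe cofibre sequence of $k$ (the bottom row of (\ref{eq hom comm})) transported along the homotopy-equivalence ladder supplied by Lemma \ref{lem homotopy eq}. Your additional remarks on the cofibration $N_A\hookrightarrow N_S$ and the identification $I(S)/I(A)\cong N_S/N_A\cong I(A^*)$ are a mild elaboration of what the paper leaves implicit for the initial terms of (\ref{eq ext tri}), not a different method.
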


\subsection{Duality of mapping cones}
Let $S, A, A^*$ be an isolated invariant set, an attractor in $S$ and the complementary repeller of $A$ in $S$ respectively.
Let $\bar{\gamma}:Z \times \R \rightarrow Z$ be the inverse flow of $\gamma$. Hence $\bar{\gamma}(z, T) = z \cdot (-T)$.  Then $A$ is a repeller, $A^*$ is an attractor and $(A^*, A)$ is an attractor-repeller pair in $S$ with respect to $\bar{\gamma}$.
As before we can define a continuous function $\bar{s}:N_A \rightarrow [0,1]$. We also have a continuous map $\bar{k}:\bar{I}(A) \rightarrow \Sigma \bar{I}(A^*)$ defined by $\bar{k}(z) = (1-\bar{s}(z), z \cdot (-\bar{s}(z)))$. Here $\bar{I}$ stands for the Conley index with respect to the inverse flow $\bar{\gamma}$. Write $-\bar{k}$ for the map $\bar{I}(A) \rightarrow \Sigma I(A^*)$ defined by $(-\bar{k})(z) = ( \bar{s}(z), z \cdot (- \bar{s}(z)))$.
From now on, we assume that $Z$ is an $n$-dimensional sphere $S^n = \R^{n} \cup \{ \infty \}$.
The pairs $(i, \bar{j})$, $(j, \bar{i})$ and $(k, -\bar{k})$ are Spanier-Whitehead dual \cite{Cornea Homotopical}. Hence by \cite[Theorem (6.10)]{Spanier Function} we have a duality map
\[
    \eta_C:C(k) \wedge C(-\bar{k}) \rightarrow \Sigma^2 S^n = S^{n+2}.
\]
Our aim is to give an explicit expression of this map. According to \cite{Spanier Function}, using our notation, $\eta_C$ is given as follows. Choose duality maps
\begin{gather*}
     \eta_A:I(A) \wedge \overline{I}(A) \longrightarrow S^n, \\
     \eta_{A^*}: I(A^*) \wedge \overline{I}(A^*) \longrightarrow S^n.
\end{gather*}
Since $k$ and $-\bar{k}$ are Spanier-Whitehead dual, the following diagram is homotopy commutative:
\[
\begin{CD}
I(A^*) \wedge \overline{I}(A)
@>{k \wedge \id}>> 
\Sigma I(A) \wedge \overline{I}(A)  \\
@V{\id \wedge (- \bar{k})}VV @VV{\Sigma \eta_A}V \\
I(A^*) \wedge \Sigma \overline{I}(A^*) @>>{\Sigma \eta_{A^*}}> 
S^{n+1} = \Sigma S^n
\end{CD}
\]
Fix a homotopy $H$ between $\Sigma \eta_A \circ (k \wedge \id)$ and $\Sigma \eta_{A^*} \circ (\id \wedge (- \bar{k}))$. That is,
\[
   \begin{split}
     & H:[0,1] \times (I(A^*) \wedge \oI(A))  \rightarrow  S^{n+1} = \Sigma S^n, \\
     & H(0, \cdot) = \Sigma \eta_A \circ (k \wedge \id), \\
     & H(1, \cdot) = \Sigma \eta_{A^*} \circ (\id \wedge (-\bar{k}) ), \\
     & H(u, *) = * \quad ( \forall u \in [0, 1]  ).
   \end{split} 
\]
Take $(t, z) \in C(I(A^*))$, $(s, w) \in \Sigma I(A)$, $(s', w') \in C(\bar{I}(A))$, $(t', z') \in \Sigma \bar{I}(A^*)$, where $t, s, s', t' \in [0, 1]$. Then the duality map $\eta_C$ is defined by the following formula:
\begin{equation} \label{eq def of eta C}
   \begin{split}
      \eta_C( (t, z) \wedge (t', z')) &= (t, t', \eta_{A^*}(z \wedge z')), \\
      \eta_C( (s, w) \wedge (s', w')) &= (s', s, \eta_{A}(w \wedge w')),   \\
      \eta_C( (t, z) \wedge (s', w'))   
       & = \left\{
          \begin{array}{ll}
          (s', H(\frac{t}{2s'},z \wedge w'))  &
            \text{if $t \leq s', s' \not= 0$,} \\
           (t, H(1 - \frac{s'}{2t}, z \wedge w')) &
              \text{if $s' \leq t, t \not= 0$,}
          \end{array}
           \right. \\
     \eta_C((w,s) \wedge (z',t')) &= *.
   \end{split}
\end{equation}
To get the explicit expression of $\eta_C$, we need to choose $\eta_{A}$, $\eta_{A^*}$ and $H$ concretely.  

We can write $\eta_A$ as follows. (See \cite[Section 3]{Dold Puppe} and \cite[Section 2.5]{Manolescu Gluing}. See also \cite{McCord}.)
Assume that $S$ does not include $\infty$. We may suppose that $N_S$ lies in $\R^n \subset Z = S^n$.
Fix small positive numbers $\epsilon$ and $\delta$ with $0 < \epsilon < \delta \ll 1$.  Put 
\begin{equation} \label{eq N'}
   \begin{split}
     & N_{A}' = N_{A} - \overline{L}_{A} \times [0, \delta), \\
     & N_{A}'' = N_{A} - L_{A} \times [0, \delta).
   \end{split}
\end{equation}
Here $\overline{L}_{A} \times [0, \delta)$ stands for a neighborhood $\{ z \in N_A | \dist (z, \overline{L}_A) < \delta \}$ of $\overline{L}_A$ in $N_A$ which is homeomorphic to $L_{A} \times [0, \delta)$. Similarly for $L_A \times [0, \delta)$. Take continuous maps
\begin{equation} \label{eq m}
   m_1:N_A \rightarrow N_A', \
   m_2:N_A \rightarrow N_A''
\end{equation}
such that
\begin{equation} \label{eq m delta}
  \begin{split}
   &   \| w - m_1(w) \| < 2\delta, \ m_1(L_A) \subset L_A, \
            \dist(m_1( \oL_{A}), \oL_{A}) > \delta, \\
   &   \| w'- m_2(w') \| < 2\delta, \ m_2(\oL_A) \subset \oL_A, \
            \dist( m_2(L_A), L_A ) > \delta.
  \end{split}
\end{equation}
Define $\eta_A:I(A) \wedge \overline{I}(A) \rightarrow S^n$ by
\[
   \eta_A(w \wedge w') =
     \left\{
       \begin{array}{ll}
         m_1(w) - m_2(w') & \text{if $\| m_1(w) - m_2(w') \| < \epsilon$,} \\
           *             & \text{otherwise}.
       \end{array}
     \right.
\]
Here we think of $S^n$ as $D^n(\epsilon)/S^{n-1}(\epsilon)$. This map is well defined and a duality map of $I(A)$ and $\overline{I}(A)$.

Similarly $\eta_{A^*}:I(A^*) \wedge \oI(A^*) \rightarrow S^n$ is defined by
\[
     \eta_{A^*}(z \wedge z') =
       \left\{
         \begin{array}{ll}
          n_1(z) - n_2(z')  & \text{if $\| n_1(z) - n_2(z') \| < \epsilon$,} \\
                *             & \text{otherwise.}
         \end{array}
       \right.
\]
Here $n_1:N_{A^*} \rightarrow N_{A^*}'$ and $n_2:N_{A^*} \rightarrow N_{A^*}''$ are maps satisfying the conditions similar to (\ref{eq m delta}).

Finally we write $H$ explicitly. Put $M = \Sigma \eta_A \circ (k \wedge \id)$, $N = \Sigma \eta_{A^*} \circ (\id \wedge (-\bar{k}) )$. Then we have
\begin{gather*}
   M(z \wedge w') = \\
     \left\{
       \begin{array}{ll}
         (1-s(z), m_1(z \cdot s(z)) - m_2(w')) &
            \text{if $\| m_1(z \cdot s(z)) - m_2(w') \| < \epsilon$,} \\
            *            & \text{otherwise},
       \end{array}
     \right.  \\
  N(z \wedge w') =   \\ 
     \left\{
      \begin{array}{ll}
        (\os(w'), n_1(z) - n_2(w' \cdot (-\os(w')))) &
            \text{if $\| n_1(z) - m_2(w' \cdot (-\os(w'))) \| < \epsilon$,} \\
            *            & \text{otherwise},
      \end{array}
     \right.
\end{gather*}
We have to construct a homotopy between $M$ and $N$. The homotopy $H$ consists of four homotopies $H^j$ $(j=1, 2, 3, 4)$.

Define
\[
    H^1: [0,1] \times ( I(A^*) \wedge \oI(A) )  \longrightarrow S^{n+1} = \Sigma S^n
\]
by
\begin{gather} 
   H^1(u, z \wedge w') =   \nonumber \\
    \left\{
      \begin{array}{ll}
        ( 1-s(z),  \hat{m}^1(u, z, w'))  &  
              \text{if $\| \hat{m}^1(u, z, w') \| < \epsilon$}, \\
          *    & \text{otherwise.}
      \end{array}
    \right.         \label{eq H^1}
\end{gather}
Here 
\[
    \hat{m}^1(u, z, w') = m_1(z \cdot s(z)) - m_2( w' \cdot (-u \bar{s}(w'))).
\]

\begin{lem} \label{lem H1 well def}
The map $H^1$ is well defined.
\end{lem}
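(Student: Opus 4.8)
The claim is that $H^1$ is well defined, i.e.\ that the formula \eqref{eq H^1} does not depend on the choices of representatives of the points $z \wedge w' \in I(A^*) \wedge \oI(A) = (N_{A^*}/L_{A^*}) \wedge (\oI(A))$, and that it sends the basepoint to the basepoint. The approach is a direct case-analysis: one must check that whenever a representative hits a point that is collapsed to the basepoint in $I(A^*)$, in $\oI(A)$, or in the smash product, the output of $H^1$ is indeed $*$; and conversely that the formula assigns a single well-defined value on the complement. Since $H^1$ is assembled from the same ingredients $m_1, m_2, s, \bar{s}$ that make $k$, $\bar{k}$, $\eta_A$, $\eta_{A^*}$, $M$ and $N$ well defined, the work is to verify that interpolating the argument of $m_2$ from $w'$ (at $u=0$) to $w' \cdot (-\bar{s}(w'))$ (at $u=1$) does not break any of these compatibilities.

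First I would record what has to be collapsed. On the $I(A^*)$ side, $z$ ranges over $N_{A^*} = N_S \setminus \operatorname{Int} N_A$ (using the identification $N_{A^*}/L_{A^*} = N_S/N_A$), extended to all of $N_S$ by setting $s(z) = 0$ on $N_A$; the basepoint class consists of $L_{A^*} \cup$ (the collapsed $N_A$). On the $\oI(A) = N_A/\oL_A$ side (Conley index for the reverse flow, where $A$ is a repeller and $\oL_A$ its ``exit set''), $w'$ ranges over $N_A$ and $\oL_A$ is collapsed. I would then check: (i) if $z \in L_{A^*}$ then, by the same argument that shows $\eta_{A^*}$ is well defined (the map $n_1$ pushes $L_{A^*}$ off the relevant region), $\| \hat m^1(u,z,w') \| \geq \epsilon$, so the value is $*$ — but wait, $\hat m^1$ only involves $m_1, m_2$, so really the relevant point is that $z \cdot s(z)$ lands suitably; I should instead note that $z \in L_{A^*}$ forces $s(z) = 0$ and hence $z \cdot s(z) = z \in L_{A^*} \subset N_S$; the needed estimate is that $m_1$ applied near $\oL_A$ stays $> \delta$ away from $m_2(\oL_A)$-type sets, exactly as in \eqref{eq m delta}, giving $\| \hat m^1 \| \geq \delta > \epsilon$. (ii) If $z \in N_A$ (the part collapsed in $I(A^*)$), then $s(z) = 0$, so $z \cdot s(z) = z \in N_A$, and again the separation condition \eqref{eq m delta} between $m_1(L_A)$ and $m_1(\oL_A)$ versus the corresponding $m_2$-image forces $\| \hat m^1 \| \geq \delta$; so the value is $*$, consistent with collapsing. (iii) If $w' \in \oL_A$, one needs that $w' \cdot (-u\bar s(w'))$ stays in $\oL_A$ — this is where positive invariance of the exit set for the reverse flow enters: $\bar s(w')$ is by definition the time to stay in $N_A$ under the reverse flow, and points of $\oL_A$ flowed backward by time $\le \bar s$ remain in $\oL_A$ (positive invariance of $\oL_A$); then $m_2(\oL_A) \subset \oL_A$ together with the $\delta$-separation gives $\| \hat m^1 \| \geq \delta$, hence value $*$.

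Next I would handle the basepoint of the smash product and continuity-of-definition within the non-basepoint region. The basepoint of $I(A^*)\wedge\oI(A)$ is the collapsed wedge, covered by cases (i)–(iii) above, so $H^1(u,*) = *$ for all $u$. On the complement, for fixed representative $(z,w')$ the formula gives a single value because $s(z)$, $\bar s(w')$, $m_1$, $m_2$ are genuine functions and the conditional ``$\| \hat m^1 \| < \epsilon$'' is a well-defined dichotomy; one must also confirm the two pieces of the piecewise definition agree on the overlap $\| \hat m^1 \| = \epsilon$ — they do, since $\| \hat m^1 \| = \epsilon$ means the point $\hat m^1$ lies on $S^{n-1}(\epsilon)$, which is exactly the subsphere collapsed in $S^n = D^n(\epsilon)/S^{n-1}(\epsilon)$, so both branches yield $*$. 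Finally I would observe that at $u = 0$ we recover $M$ and at $u = 1$ we recover $N$, so $H^1$ indeed connects the two.

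\textbf{Main obstacle.} The only genuinely delicate point is case (iii): verifying that interpolating the reverse-flow time, $w' \mapsto w'\cdot(-u\bar s(w'))$ with $u\in[0,1]$, keeps $\oL_A$ inside $\oL_A$ (and more generally keeps the argument of $m_2$ inside the region where the separation estimates of \eqref{eq m delta} apply). This rests on the positive-invariance property of the exit set $L$ in the definition of an index pair (property (4) of the Fact), applied to the reverse flow for which $\oL_A$ plays the role of $L$, together with the bound $0 \le u\bar s(w') \le \bar s(w')$ and the definition of $\bar s$ as a ``time to leave $N_A$.'' Once that invariance is in hand, the remaining verifications are routine applications of the $\delta$-separation inequalities \eqref{eq m delta} exactly as in the construction of $\eta_A$, $\eta_{A^*}$, $M$ and $N$, and I would present them briefly rather than in full detail.
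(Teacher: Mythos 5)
Your overall approach — check that $H^1(u,\cdot)$ sends the basepoint of $I(A^*)\wedge \oI(A)$ to $*$ for all $u$ — matches the paper's, but two of the three cases are handled incorrectly, and you repeatedly miss the mechanism that actually does most of the work: the \emph{first suspension coordinate} $1-s(z)$. The target is $\Sigma S^n$, in which the slices $\{0\}\times S^n$ and $\{1\}\times S^n$ are collapsed; whenever that first coordinate equals $0$ or $1$, the value is $*$ regardless of where $\hat m^1$ lands.

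Concretely, for $z\in L_{A^*}$ (your case~(i)): the paper's argument is simply that $s(z)=0$, hence the first coordinate of $H^1$ is $1-s(z)=1$, and the output is collapsed. Your attempted norm estimate does not hold: with $z\in L_{A^*}$ you do get $z\cdot s(z)=z\in\oL_A$, so $m_1(z)$ is at distance $>\delta$ from $\oL_A$; but for an arbitrary $w'\in N_A$ the point $m_2(w'\cdot(-u\bar s(w')))$ need not lie in or near $\oL_A$, so there is no separation — $\|\hat m^1\|$ can well be $<\epsilon$. Only the suspension-coordinate collapse rescues this case. For $w'\in\oL_A$ (your case~(iii)): the paper first splits on $s(z)$. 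If $s(z)=1$ the first coordinate is $0$ and the output collapses; your argument does not address this sub-case and in fact cannot, since the $\delta$-separation needs $z\cdot s(z)\in\oL_A$, which only follows when $s(z)<1$ (so that the trajectory exits through $L_{A^*}\subset\oL_A$). When $s(z)<1$ your positive-invariance argument does go through, but it is heavier than needed: the paper simply notes $\bar s(w')=0$ for $w'\in\oL_A$, so $w'\cdot(-u\bar s(w'))=w'$ for every $u$ and no invariance statement is required. Finally, your case~(ii) (points of $N_A$ collapsed inside $N_S/N_A$) is not relevant here — the domain of $H^1$ is $I(A^*)\wedge\oI(A)$ with $I(A^*)=N_{A^*}/L_{A^*}$, so $z$ ranges over $N_{A^*}$ only; the extension of $s$ to $N_S$ is used for $\varphi$, not for $H^1$.
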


\begin{proof}
We need to show that $H^1(u, z \wedge w') = *$ if $z \in L_{A^*}$ or $w' \in \oL_{A}$. Let $z \in L_{A}$. Then $s(z) = 0$. Hence $H^1(u, z \wedge w') = *$ since the first component of $H^1$ is $1$.
Suppose that $w' \in \oL_{A}$. Then $\os(w') = 0$. If $s(z) = 1$, $H^1(u, z \wedge w') = *$ since the first component of $H^1$ is $0$. Suppose $s(z) < 1$. Then $z \cdot s(z)$ lies in $L_{A^*} \subset \oL_A$. Hence
\[
      \| m_1(z \cdot s(z)) - m_2(w \cdot (-u \bar{s}(w')) \|
       = \| m_1(z \cdot s(z)) - m_2(w) \| > \delta  > \epsilon
\]
by (\ref{eq m delta}).
Therefore $H^1(u, z \wedge w') = *$.
\end{proof}

Put 
\[
    \hat{m}^1(z, w) = \hat{m}^1(1, z, w) = m_1(z \cdot s(z)) - m_2(w' \cdot (-\bar{s}(w'))).
\]
By Lemma \ref{lem H1 well def}, $M$ is homotopic to
\begin{gather} 
    H^1(1, \cdot):I(A) \wedge \bar{I}(A^*) \rightarrow \Sigma S^n, \label{eq H^1 u=1} \\
    H^1(1, z \wedge w') = 
      \left\{ 
         \begin{array}{ll}
        (1-s(z), \hat{m}^1(z, w))  & \text{if $\| \hat{m}^1 (z, w) \| < \epsilon$}, \\
               *               & \text{otherwise}.
         \end{array}
      \right.           \nonumber
\end{gather}

\noindent
To define the second homotopy $H^2:[0,1] \times ( I(A^*) \wedge \oI(A) ) \rightarrow \Sigma S^{n}$, choose extensions $\tilde{m}_j:N_{S} \rightarrow N_{S}$ and $\tilde{n}_j:N_{S} \rightarrow N_{S}$ of $m_j$ and $n_j$. We may suppose that
\begin{equation} \label{eq tilde m n}
\begin{array}{c}
      \| \tilde{m}_j(z) - z \| < 2\delta, \quad
      \| \tilde{n}_j(z) - z \| < 2\delta  \quad
         (z \in N_S), \\
      f_3(\tilde{m}_1(z)) < a_3 \quad (z \in \oL_S), \\
      f_1(\tilde{n}_2(z)) > a_1 \quad
         (z \in L_{S}), \\
      \tilde{m}_1(z) = \tilde{n}_1(z) \quad (z \in L_S), \\
      \tilde{m}_2(z) = \tilde{n}_2(z) \quad (z \in \oL_S).
\end{array}
\end{equation}
and that $\tilde{m}_j$ and $\tilde{n}_j$ are homotopic to the identity of $N_{S}$. Here $f_1, f_3$ and $a_1, a_3$ are the Lyapunov functions and the positive numbers that appeared in (\ref{eq index pairs}). In particular, we have a homotopy
\begin{equation} \label{eq homotopy h_j}
  \begin{split}
    & h_j:[0,1] \times N_{S} \rightarrow N_{S},  \\
    & h_j(0, \cdot) = \tilde{m}_j, \quad h_j(1, \cdot) = \tilde{n}_j.
  \end{split}
\end{equation}
We may suppose that 
\[
      \| h_j(u, z) \| < 2\delta \quad
      ( u \in [0, 1], z \in N_S)
\]
and that
\begin{align}\
  &  h_1(u, z) = \tilde{m}_1(z) = \tilde{n}_1(z)  \quad
      (u \in [0,1], z \in L_S),      \label{eq f1 h1}   \\
  &  h_2(u, z) = \tilde{m}_2(z) = \tilde{n}_2(z)  \quad
     (u \in [0, 1], z \in \oL_{S}),    \label{eq f3 h2}  \\
  &  f_1(h_2(u,z)) > a_1 \quad
      (u \in [0,1], z \in N_S),       \label{eq f1 h2}         \\
  &  f_2( h_1(u,z)) < a_2 \quad
      ( u \in [0, 1), z \in L_{A^*}),   \label{eq f2 h1} \\
  &  f_2( h_2(u, z)) > a_2 \quad       
     ( u \in (0, 1], z \in L_{A^*}),     \label{eq f2 h2}\\
  &  f_3(h_1(u,z)) < a_3 \quad
       (u \in [0,1], z \in N_S).      \label{eq f3 h1 NS}  
\end{align}
 Note that 
\[
    h_1(u, L_{A^*}) \cap h_2(u, L_{A^*}) = \emptyset \ (\forall u \in [0, 1])
\]
by (\ref{eq f2 h1}) and (\ref{eq f2 h2}). Hence if $\epsilon > 0$ is small enough,
\begin{equation} \label{eq dist A^*}
    \dist( h_1(u, L_{A^*}), h_2(u, L_{A^*})) > \epsilon \ (\forall u \in [0, 1])
\end{equation}
since $L_{A^*}$ and the interval $[0, 1]$ are compact. Similarly by (\ref{eq f3 h1 NS}) and (\ref{eq f3 h2}) we have
\begin{equation} \label{eq dist A^*S}
   \dist ( h_1(u, N_S), h_2(u, \oL_S)) > \epsilon \ (\forall u \in [0, 1])
\end{equation}
if $\epsilon > 0$ is small enough.
Define $H^2:[0,1] \times (I(A^*) \wedge \bar{I}(A)) \rightarrow \Sigma S^{n}$ by
\begin{gather}
   H^2(u, z \wedge w') = 
     \left\{
      \begin{array}{ll}
        (1-s(z), \hat{h}(u, z, w')) & \text{if $\| \hat{h}(u, z, w') \| < \epsilon$}, \\
          *        & \text{otherwise},
      \end{array}
     \right.        \label{eq H^2}      \\
             \hat{h}(u, z, w') = 
                h_1(u, z \cdot s(z)) - h_2(u, w' \cdot (-\bar{s}(w'))).
   \nonumber
\end{gather}

\begin{lem}
The map $H^2$ is well defined.
\end{lem}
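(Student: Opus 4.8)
I would prove this exactly along the lines of Lemma~\ref{lem H1 well def}. Recall that $I(A^{*})\wedge\overline{I}(A)$ is the quotient of $N_{A^{*}}\times N_{A}$ by the locus $L_{A^{*}}\times N_{A}\cup N_{A^{*}}\times\overline{L}_{A}$, and that $\Sigma S^{n}$ collapses $\{0,1\}\times S^{n}\cup[0,1]\times\{*\}$. So the claim splits into two parts. First, the defining formula has to make sense: one checks that $z\cdot s(z)\in N_{A^{*}}$ and $w'\cdot(-\overline{s}(w'))\in N_{A}$, and that $N_{A^{*}}\subset N_{S}$, $N_{A}\subset N_{S}$ (the latter inclusions following from the explicit form of the index pairs in (\ref{eq index pairs}) together with $\{f_{2}\ge a_{2}\}\subset\{f_{1}\ge a_{1}\}$), so that $\hat h(u,z,w')\in\R^{n}$ is defined and $h_{1},h_{2}$ may be applied. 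Second, and this is the substantive part, one must show $H^{2}(u,z\wedge w')=*$ whenever $z\in L_{A^{*}}$ or $w'\in\overline{L}_{A}$; I would establish this by a short case analysis.

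If $z\in L_{A^{*}}$, then a point of the exit set leaves $N_{A^{*}}$ immediately, so $s'(z)=0$, hence $s(z)=0$, and the first coordinate $1-s(z)$ of $H^{2}(u,z\wedge w')$ equals $1$: we are at the basepoint, for every $u$ and every $w'$. Next suppose $w'\in\overline{L}_{A}$; the same argument applied to the reverse flow gives $\overline{s}(w')=0$, so $w'\cdot(-\overline{s}(w'))=w'$ and $\hat h(u,z,w')=h_{1}(u,z\cdot s(z))-h_{2}(u,w')$. If $s(z)=1$ the first coordinate is $0$ and again we are at the basepoint, so I may also assume $s(z)<1$; then $s(z)=s'(z)$ and $z\cdot s(z)\in L_{A^{*}}$, because the forward flow can only leave $N_{A^{*}}=\{f_{2}\ge a_{2},\ f_{3}\le a_{3}\}$ across $f_{2}^{-1}(a_{2})$ (the region $\{f_{3}\le a_{3}\}$ being forward invariant inside $N_{A^{*}}$, since $f_{3}$ strictly decreases on orbits).

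It then remains to show $\|\hat h(u,z,w')\|\ge\epsilon$, which puts us in the ``otherwise'' branch, so that $H^{2}(u,z\wedge w')=*$. Here I would use the explicit index pairs (\ref{eq index pairs}) again: the reverse-flow exit set $\overline{L}_{A}$ of $N_{A}=\{f_{1}\ge a_{1},\ f_{2}\le a_{2},\ f_{3}\le a_{3}\}$ decomposes, by monotonicity of the Lyapunov functions, into its part on $f_{2}^{-1}(a_{2})$, which lies in $L_{A^{*}}$ (using $\{f_{2}\ge a_{2}\}\subset\{f_{1}\ge a_{1}\}$ once more), and its part on $f_{3}^{-1}(a_{3})$, which lies in $\overline{L}_{S}$; hence $\overline{L}_{A}\subset L_{A^{*}}\cup\overline{L}_{S}$. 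If $w'\in L_{A^{*}}$, then, since $z\cdot s(z)\in L_{A^{*}}$ as well, estimate (\ref{eq dist A^*}) gives $\|\hat h(u,z,w')\|\ge\dist(h_{1}(u,L_{A^{*}}),h_{2}(u,L_{A^{*}}))>\epsilon$. If instead $w'\in\overline{L}_{S}$, then, since $z\cdot s(z)\in L_{A^{*}}\subset N_{S}$, estimate (\ref{eq dist A^*S}) gives $\|\hat h(u,z,w')\|\ge\dist(h_{1}(u,N_{S}),h_{2}(u,\overline{L}_{S}))>\epsilon$. This closes the case analysis and the proof.

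The one step I expect to need genuine care is the last paragraph: one must correctly identify, in each subcase, which of the sets $L_{A^{*}}$, $\overline{L}_{S}$, $N_{S}$ contains the points $w'=w'\cdot(-\overline{s}(w'))$ and $z\cdot s(z)$, so that the distance estimates (\ref{eq dist A^*}) and (\ref{eq dist A^*S})---which were arranged precisely for this---apply. This is exactly where the explicit description (\ref{eq index pairs}) of the index pairs and the strict monotonicity of $f_{1},f_{2},f_{3}$ along orbits (together with $\{f_{2}\ge a_{2}\}\subset\{f_{1}\ge a_{1}\}$) get used. Everything else is formal and parallels Lemma~\ref{lem H1 well def}; the only new feature compared with that proof is that here two distance estimates have to be combined, because $\overline{L}_{A}$ is not contained in $L_{A^{*}}$ alone.
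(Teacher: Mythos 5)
Your proof is correct and follows essentially the same case analysis as the paper's: first handle $z\in L_{A^*}$ and the subcase $s(z)=1$ via the suspension coordinate; then, for $w'\in\overline{L}_A$ with $s(z)<1$, note $z\cdot s(z)\in L_{A^*}$ and split $\overline{L}_A$ into $L_{A^*}$ and $\overline{L}_A\setminus L_{A^*}\subset\overline{L}_S$, invoking (\ref{eq dist A^*}) and (\ref{eq dist A^*S}) respectively. The only difference is that you spell out (correctly) why $\overline{L}_A\subset L_{A^*}\cup\overline{L}_S$ from the explicit index pairs (\ref{eq index pairs}) and the monotonicity of the Lyapunov functions, whereas the paper takes that inclusion as given.
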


\begin{proof}
We need to show that $H(u, z \wedge w') = *$ if $z \in L_{A^*}$ or $w' \in \oL_A$.

Let $z \in L_{A^*}, w' \in N_A$. Then $s(z) = 0$ and the first component of $H^2$ is $1$. Hence $H^2(u, z \wedge w') = *$. 

Let $z \in N_{A^*}$ and $w' \in \oL_{A}$. Assume that $s(z) = 1$, then the first component of $H^2$ is $0$. Hence $H^2 = *$. Assume that $s(z) < 1$. We have $z \cdot s(z) \in L_{A^*}$.
If $w' \in L_{A^*} \subset \oL_A$, by (\ref{eq dist A^*}) we have
\[
     \| \hat{h}(u, z, w') \| > \epsilon.
\]
Hence we have $H^2(z \wedge w,u) = *$. Suppose that $w' \in \oL_{A} \backslash L_{A^*} \subset \oL_S$. By (\ref{eq dist A^*S}) we have
\[
    \| \hat{h}(u, z, w') \| > \epsilon.
\]
Therefore $H^2(u, z \wedge w') = *$.
\end{proof}

The map $H^2$ is a homotopy from (\ref{eq H^1 u=1})
to
\begin{gather}
   H^2(1, \cdot):I(A^*) \wedge \oI(A)  \longrightarrow  S^{n+1}    \label{eq H^2 u=1}  \\
    H^2(1, z \wedge w') =
       \left\{
        \begin{array}{ll}
         (1-s(z), \hat{n}(z, w)) & \text{if $\| \hat{n}(z, w) \| < \epsilon$}, \\
           *     & \text{otherwise},
        \end{array}
       \right.                   \nonumber
\end{gather}
Here 
\[
  \begin{split}
    \hat{n}(z, w) &= \tilde{n}_1(z \cdot s(z)) - \tilde{n}_2(w' \cdot \bar{s}(w')).
  \end{split}
\]

Define the third homotopy $H^3:[0,1] \times ( I(A^*) \wedge \oI(A)) \rightarrow \Sigma S^{n}$ by
\begin{gather}
  H^3(u, z \wedge w') =  \nonumber \\
    \left\{
     \begin{array}{ll}
       ( (1-u)(1-s(z)) + u \bar{s}(w'), \hat{n}(z, w)) &
         \text{if $\| \hat{n}(z, w) \| < \epsilon$,}  \\
       * & \text{otherwise}
     \end{array}
    \right.            \label{eq H^3}
\end{gather}

\begin{lem}
The map $H^3$ is well defined.
\end{lem}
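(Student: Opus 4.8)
The plan is to check that $H^3$, defined above on $[0,1]\times(N_{A^*}\times N_A)$, descends to a well-defined continuous map on $[0,1]\times(I(A^*)\wedge\bar I(A))$. Two points are involved. The routine one: the first suspension coordinate $(1-u)(1-s(z))+u\bar s(w')$ is a convex combination of $1-s(z)\in[0,1]$ and $\bar s(w')\in[0,1]$, hence lies in $[0,1]$, so $H^3$ genuinely takes values in $\Sigma S^n$; continuity is then obtained exactly as for $H^1$ and $H^2$, using continuity of $s=s_{A^*}$ and of $\bar s$ and the fact that, with $S^n=D^n(\epsilon)/S^{n-1}(\epsilon)$, the two clauses defining $H^3$ agree (both equal $*$) along $\|\hat n(z,w)\|=\epsilon$. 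The point that needs argument is that $H^3(u,z\wedge w')=*$ whenever $z\in L_{A^*}$ or $w'\in\oL_A$. I would first note that $\hat n(z,w)$ here coincides with $\hat h(1,z,w')$ from the definition of $H^2$, so much of this is contained in the proof that $H^2$ is well defined, read at $u=1$.

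For the case $w'\in\oL_A$ I would reproduce that argument at $u=1$: then $\bar s(w')=0$, so the first coordinate equals $(1-u)(1-s(z))$ and $\hat n(z,w)=\tilde n_1(z\cdot s(z))-\tilde n_2(w')$. If $s(z)=1$ the first coordinate vanishes and $H^3=*$. If $s(z)<1$ then $z\cdot s(z)\in L_{A^*}\subset N_S$; using $\oL_A\subset L_{A^*}\cup\oL_S$ (which follows from the explicit forms of the index pairs in (\ref{eq index pairs}) and the strict monotonicity of the Lyapunov functions along orbits) and splitting into $w'\in L_{A^*}$ and $w'\in\oL_S$, the estimates (\ref{eq dist A^*}) and (\ref{eq dist A^*S}), evaluated at $u=1$ where $h_j(1,\cdot)=\tilde n_j$ by (\ref{eq homotopy h_j}), give $\|\hat n(z,w)\|>\epsilon$; so $H^3=*$.

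The case $z\in L_{A^*}$ is where $H^3$ genuinely differs from $H^2$, and I expect it to be the main (if modest) obstacle: now $s(z)=0$, hence $z\cdot s(z)=z$ and $\tilde n_1(z)=n_1(z)\in L_{A^*}$ by the $n_1$-analogue of (\ref{eq m delta}), but the first coordinate $(1-u)+u\bar s(w')$ no longer collapses for free, so one must actually bound $\hat n$. If $\bar s(w')=1$ the first coordinate is $1$ and $H^3=*$. If $\bar s(w')<1$, I would flow $w'$ backward as in the proof that $H^1$ is well defined to land $v:=w'\cdot(-\bar s(w'))$ in $\oL_A$; since $N_{A^*}\subset\{f_2\geq a_2\}\subset\{f_1\geq a_1\}$ one again has $\oL_A\subset L_{A^*}\cup\oL_S$. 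If $v\in L_{A^*}$, then $\tilde n_2(v)=n_2(v)$ has distance $>\delta>\epsilon$ from $L_{A^*}$ by the $n_2$-analogue of (\ref{eq m delta}), while $\tilde n_1(z)\in L_{A^*}$, so $\|\hat n(z,w)\|>\epsilon$. If $v\in\oL_S\setminus L_{A^*}$, then $\tilde n_1(z)\in\tilde n_1(N_S)$ and $\tilde n_2(v)\in\tilde n_2(\oL_S)$, so $\|\hat n(z,w)\|>\epsilon$ by (\ref{eq dist A^*S}) at $u=1$. In every case $H^3(u,z\wedge w')=*$, and combined with the first paragraph this gives that $H^3$ is well defined.
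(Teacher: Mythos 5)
Your proof is correct and follows essentially the same route as the paper: reduce to the norm estimate $\|\hat n(z,w)\|>\epsilon$ via the two distance inequalities (\ref{eq dist A^*}), (\ref{eq dist A^*S}) evaluated at $u=1$, and dispose of the remaining corners $s(z)=1,\bar s(w')=0$ and $s(z)=0,\bar s(w')=1$ by the vanishing of the first suspension coordinate. The only difference is that you spell out the case $z\in L_{A^*}$, $\bar s(w')<1$ explicitly (flowing $w'$ backward to land $v=w'\cdot(-\bar s(w'))$ in $\oL_A$ and then splitting by $\oL_A\subset L_{A^*}\cup\oL_S$), whereas the paper simply cites the previous lemma's proof — your version makes the symmetry with the $H^2$ argument transparent, which is worthwhile since the $H^2$ proof handled $z\in L_{A^*}$ by the first-coordinate collapse rather than by a norm estimate.
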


\begin{proof}
If $s(z) = 0$, $\bar{s}(w')<1$ or $s(z) < 1$, $\bar{s}(w') = 0$ then
\[
    \| \hat{n}(z, w) \| > \epsilon
\]
as proved in the proof of the previous lemma. Hence $H^3(z \wedge w', u) = *$.
Assume that $s(z) = 0$ and $\bar{s}(w') = 1$. In this case, the first component of $H^3$ is $1$. Hence $H^3 = *$. Assume that $s(z) = 1$ and $\bar{s}(w') = 0$. Then the first component of $H^3$ is $0$. Hence $H^3 = *$.
\end{proof}

The map $H^3$ is a homotopy from (\ref{eq H^2 u=1}) 
to
\begin{gather} \label{eq H^3 u=1}
  H^3(1, \cdot): I(A^*) \wedge \oI(A)  \longrightarrow  \Sigma S^n  \\
  H^3(1, z \wedge w') =
       \left\{
        \begin{array}{ll}
         (\bar{s}(w'),  \hat{n}(z,w) ) & \text{if $\| \hat{n}(z, w) \| < \epsilon$}, \\
           *     & \text{otherwise}.
        \end{array}
       \right.           \nonumber
\end{gather}
Lastly define $H^4:[0,1] \times (I(A^*) \wedge \oI(A)) \rightarrow \Sigma S^n$ by
\begin{gather} 
  H^4(u, z \wedge w') =        \nonumber \\
    \left\{
     \begin{array}{ll}
       (\bar{s}(w'), \hat{n}(u, z, w)) &
         \text{if $\| \hat{n}(u, z, w) \| < \epsilon$,}  \\
       * & \text{otherwise}
     \end{array}
    \right.                   \label{eq H^4}
\end{gather}
Here 
\[
   \hat{n}(u, z, w) = 
   \tilde{n}_1(z \cdot (1-u)s(z)) - \tilde{n}_2( w' \cdot \bar{s}(w')).
\]

\begin{lem}
The map $H^4$ is well defined.
\end{lem}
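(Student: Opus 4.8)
The plan is to argue exactly as in the well‑definedness proofs for $H^1$, $H^2$ and $H^3$ above. The formula (\ref{eq H^4}) is continuous in $(u,z,w')\in[0,1]\times N_{A^*}\times N_A$, since $s$, $\bar{s}$ and the maps $\tilde{n}_j$ are continuous and the locus where $\|\hat{n}(u,z,w)\|=\epsilon$ is sent to the base point $S^{n-1}(\epsilon)$ of $S^n=D^n(\epsilon)/S^{n-1}(\epsilon)$; so the only point to check is that $H^4$ descends to $[0,1]\times(I(A^*)\wedge\oI(A))$, i.e. that $H^4(u,z\wedge w')=*$ whenever $z\in L_{A^*}$ or $w'\in\oL_A$.

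The first case, $w'\in\oL_A$, is immediate: since $\oL_A$ is the exit set of the isolating neighborhood $N_A$ for the reversed flow $\bar{\gamma}$, we have $\bar{s}(w')=0$ (the analogue for $\bar{\gamma}$ of the fact, used throughout the well‑definedness proofs above, that $s$ vanishes on $L_{A^*}$), so the suspension coordinate of $H^4(u,z\wedge w')$ equals $0$; since $\{0\}\times S^n$ is collapsed to the base point in $\Sigma S^n$, this gives $H^4(u,z\wedge w')=*$ for all $u$ and all $z$.

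For the second case, $z\in L_{A^*}$, I would first note that $s(z)=0$, so that $z\cdot(1-u)s(z)=z$ for every $u\in[0,1]$ and
\[
   \hat{n}(u,z,w)=\tilde{n}_1(z)-\tilde{n}_2\bigl(w'\cdot(-\bar{s}(w'))\bigr)
\]
is independent of $u$. If $\bar{s}(w')\in\{0,1\}$, then $H^4(u,z\wedge w')=*$, since the suspension coordinate is an endpoint of $[0,1]$. If $\bar{s}(w')\in(0,1)$, then $w'\cdot(-\bar{s}(w'))$ lies on $\oL_A$, and one is in one of the two subcases occurring in the well‑definedness proof of $H^2$: either $w'\cdot(-\bar{s}(w'))\in L_{A^*}$, and then $z$ and $w'\cdot(-\bar{s}(w'))$ both lie in $L_{A^*}$, so (\ref{eq dist A^*}) at $u=1$ (recall $\tilde{n}_j=h_j(1,\cdot)$) gives $\|\hat{n}(u,z,w)\|>\epsilon$; or $w'\cdot(-\bar{s}(w'))\in\oL_A\setminus L_{A^*}\subset\oL_S$, and then $z\in L_{A^*}\subset N_S$, so (\ref{eq dist A^*S}) at $u=1$ gives $\|\hat{n}(u,z,w)\|>\epsilon$. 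Either way $H^4(u,z\wedge w')=*$, as required.

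I do not expect a genuine obstacle here; the only thing that needs care is the bookkeeping of the boundary strata, in particular the inclusions $L_{A^*}\subset\oL_A$, $L_{A^*}\subset N_S$ and $\oL_A\setminus L_{A^*}\subset\oL_S$ used above, all of which follow from the description (\ref{eq index pairs}) of the index pairs and from the normalization $\{f_2(z)\ge a_2\}\subset\{f_1(z)\ge a_1\}$ imposed before (\ref{eq index pairs}). No estimate beyond the $u=1$ specializations of (\ref{eq dist A^*}) and (\ref{eq dist A^*S}), already secured when $H^2$ was defined, is required.
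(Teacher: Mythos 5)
Your proof is correct and is essentially the argument the paper has in mind: the paper merely says ``the proof is similar to that of Lemma \ref{lem H1 well def},'' and what you write is a careful realization of that, with the roles of $z$ and $w'$ swapped. Two remarks worth keeping in mind. First, you implicitly correct a sign typo in the paper: as written, $\hat{n}(u,z,w)$ uses $w'\cdot\bar{s}(w')$, but consistency with $N$, $\hat h$ and (\ref{eq H^3 u=1}) (to which $H^4$ must connect at $u=0$ and $u=1$) forces $w'\cdot(-\bar{s}(w'))$, which is what you use. Second, a literal transcription of the $H^1$ argument does not close: there one lands both $z\cdot s(z)$ and $w'$ in $\oL_A$ and invokes (\ref{eq m delta}) directly, but here the relevant map is $\tilde n_2$, which is only controlled on $N_{A^*}$ by the $n_j$-analogue of (\ref{eq m delta}), and $w'\cdot(-\bar{s}(w'))$ may well lie in $\oL_A\setminus L_{A^*}$. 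Your split into the subcases $w'\cdot(-\bar{s}(w'))\in L_{A^*}$ and $w'\cdot(-\bar{s}(w'))\in\oL_A\setminus L_{A^*}\subset\oL_S$, handled respectively by (\ref{eq dist A^*}) and (\ref{eq dist A^*S}) at $u=1$, is exactly the extra bookkeeping needed, and the inclusions you cite follow from the Lyapunov-function description in (\ref{eq index pairs}) and the normalization $\{f_2\ge a_2\}\subset\{f_1\ge a_1\}$, as you say.
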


\begin{proof}
The proof is similar to that of Lemma \ref{lem H1 well def}.
\end{proof}

The map $H^4$ is a homotopy between (\ref{eq H^3 u=1}) and $N$.
Thus we have the homotopy $H$ between $M$ and $N$:
\begin{equation}   \label{eq H1234}
   H(u, z \wedge w') =
    \left\{
      \begin{array}{ll}
      H^1(4u, z \wedge w') & \text{if $0 \leq u \leq \frac{1}{4}$}, \\
      H^2(4u-1, z \wedge w') &
                 \text{if $\frac{1}{4} \leq u \leq \frac{1}{2}$}, \\
      H^3(4u-2, z \wedge w') &
                 \text{if $\frac{1}{2} \leq u \leq \frac{3}{4}$}, \\
      H^4(4u-3, z \wedge w') &
                \text{if $\frac{3}{4} \leq u \leq 1$}.          
      \end{array}
    \right.
\end{equation}

Substituting the definitions of $\eta_A, \eta_A*$ and $H$ into (\ref{eq def of eta C}), we get the explicit formula for $\eta_C$. We use the formula to prove the following:

\begin{lem} \label{lem CD duality}
The following diagram is homotopy commutative:
\[
   \begin{CD}
     \Sigma I(S) \wedge \Sigma \bar{I}(S) @> {- \Sigma^2 \eta_S} >>    \Sigma^2 S^n \\
     @V\varphi \wedge \bar{\varphi}VV                     @|       \\
       C(k) \wedge C( -\bar{k})    @>\eta_{C}>>          \Sigma^2 S^n
   \end{CD}
\]
Here $\varphi$ and $\bar{\varphi}$ are the homotopy equivalences defined in Section \ref{subsec attractor}.
\end{lem}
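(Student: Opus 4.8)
The goal is to verify that two maps $\Sigma^2 S^n \leftarrow C(k) \wedge C(-\bar k)$, namely $-\Sigma^2\eta_S$ precomposed with the identity and $\eta_C$ precomposed with $\varphi \wedge \bar\varphi$, agree up to homotopy. My plan is to compute both sides piecewise on the four types of ``cells'' of $C(k)\wedge C(-\bar k)$, corresponding to the four cases in the definition \eqref{eq def of eta C} of $\eta_C$: a point of $C(k)\wedge C(-\bar k)$ is a smash of a point in $C(I(A^*))$ or $\Sigma I(A)$ with a point in $C(\bar I(A))$ or $\Sigma\bar I(A^*)$. The last case, where both factors lie in the ``wrong half,'' maps to the basepoint on both sides, so it is vacuous. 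So there are really three substantive cells to check. First I would substitute the explicit formulas for $\eta_A$, $\eta_{A^*}$ (via the maps $m_1,m_2,n_1,n_2$) and for $H = H^1 * H^2 * H^3 * H^4$ from \eqref{eq H1234} into \eqref{eq def of eta C}, obtaining a completely explicit (if lengthy) formula for $\eta_C$ in terms of the flow, the Lyapunov functions, and $s, \bar s$.

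**The comparison map and reduction.** On the other side, $-\Sigma^2\eta_S$ is built from a duality map $\eta_S: I(S)\wedge\bar I(S)\to S^n$ of the same $m$–$n$ type, using index-pair data $(N_S,L_S)$ and its reverse-flow counterpart. The composite $\varphi\wedge\bar\varphi$ folds a point of $\Sigma I(S)$ into $C(k)$ via \eqref{eq varphi}: writing $z\in N_S$ and the suspension parameter $t$, $\varphi$ either records $(a(s(z),t),z)\in C(I(A^*))$ or $(t, z\cdot s(z))\in\Sigma I(A)$ depending on whether $t\ge 1-s(z)$. The key structural observation is that the decomposition of $\Sigma I(S)\wedge\Sigma\bar I(S)$ induced by $\varphi\wedge\bar\varphi$ into the three cells matches, after reparametrisation of the suspension coordinates, the three-case split of $\eta_C$; this should be checked by direct comparison of the ``switching loci'' $t = 1-s(z)$, $t' = \bar s(w')$. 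Once that bookkeeping is in place, on the first cell (both factors in $C(I(A^*))$, equivalently $t < 1-s(z)$ and $t' > \bar s(w')$... — I would pin down the exact correspondence here) $\eta_C$ reduces to $\eta_{A^*}$ applied after the flow has been run for time $s(z)$ resp.\ $-\bar s(w')$, which by the definition of $\eta_S$ via the flow-independence of duality maps is homotopic to $\eta_S$ itself; the suspension coordinates $(t,t')$ land unchanged, and one tracks the overall sign. On the mixed cell, the formula for $\eta_C$ invokes the homotopy $H$, and here is where the careful construction of $H^1,\dots,H^4$ pays off: $H$ was engineered precisely so that running it interpolates between the ``$I(A)$-description'' and the ``$I(A^*)$-description'' of the reduced duality, and the intermediate value $H(\tfrac{t}{2s'},-)$ resp.\ $H(1-\tfrac{s'}{2t},-)$ slots in as exactly the needed path.

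**The sign and the hard part.** The $-$ sign in front of $\Sigma^2\eta_S$ is forced by the transposition of suspension coordinates visible already in \eqref{eq def of eta C}: on the $\Sigma I(A)$–cell, $\eta_C((s,w)\wedge(s',w')) = (s',s,\eta_A(w\wedge w'))$ has the two suspension parameters \emph{swapped} relative to the $C(I(A^*))$–cell $(t,t',\eta_{A^*}(\cdots))$, and the swap of the two $S^1$ factors in $S^{n+2} = S^1\wedge S^1\wedge S^n$ is a degree $-1$ map. Tracking this consistently across the cell boundaries — making sure the reparametrisations $a(s,t)$ and the cutoff functions glue to a genuine homotopy rather than just a cell-wise equality — is the main obstacle; the verification that $\varphi\wedge\bar\varphi$ interacts correctly with the piecewise structure at the seams $t = 1-s(z)$ and $t' = \bar s(w')$ requires checking that both composites send a neighbourhood of each seam to the basepoint or agree there, which is where \eqref{eq m delta}, \eqref{eq tilde m n}, \eqref{eq dist A^*} and \eqref{eq dist A^*S} get used to guarantee the relevant $\|\cdot\| > \epsilon$ collapses. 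I expect the bulk of the write-up to be this gluing/seam analysis rather than the individual cell computations, each of which is a short unwinding of definitions.
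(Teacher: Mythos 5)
Your overall plan — substitute the explicit formulas for $\eta_A$, $\eta_{A^*}$ and $H$ from \eqref{eq def of eta C}, \eqref{eq H1234} into the definition of $\eta_C$ and then deform to $-\Sigma^2\eta_S$ — is the same starting point the paper uses; equation \eqref{eq muC varphi} in the paper's proof is exactly the explicit case-analysis you describe. But after that point there is a real gap, and also one outright error.

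The error first: you dismiss the fourth cell (both smash factors in the ``wrong half,'' i.e.\ $0\le t\le 1-s(z)$ and $0\le t'\le 1-\bar s(z')$) as vacuous ``since both composites send it to the basepoint.'' That is only true for $\eta_C\circ(\varphi\wedge\bar\varphi)$; the map $-\Sigma^2\eta_S$ is generically \emph{not} basepoint there, since nothing forces $\|\tilde m_1(z)-\tilde m_2(z')\|\ge\epsilon$ in that parameter range. So on this cell the two maps genuinely disagree, and the required homotopy must do nontrivial work shrinking the support of the left-hand side; this cell cannot be dropped.

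The gap: you correctly identify that the hard part is showing the cell-wise comparisons glue along the seams $t=1-s(z)$, $t'=1-\bar s(z')$ to a genuine homotopy, but you then leave that part undone and substitute the vague phrase ``flow-independence of duality maps.'' The paper's proof is structured precisely so as to \emph{avoid} the seam-gluing problem. After writing down \eqref{eq muC varphi}, it observes that all the non-basepoint cases can be collected into one uniform expression \eqref{eq mu_C} of the shape $(s_1(\zeta),s_2(\zeta),l_1(\zeta)-l_2(\zeta))$, where $l_j(\zeta)=h_j(t_j(\zeta),z\cdot\tau_j(\zeta))$ (resp.\ with $z'$) and $s_1,s_2$ are functions of $t,t',s(z),\bar s(z')$ only. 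It then builds two \emph{globally defined} homotopies: $H'$, which linearly retracts the time parameters $t_j(\zeta),\tau_j(\zeta)$ inside $l_j$ to zero so that $l_1-l_2$ becomes $\tilde m_1(z)-\tilde m_2(z')$; and $H''$, which deforms the suspension coordinates by replacing $s(z)\rightsquigarrow (1-u)s(z)+u$ and $\bar s(z')\rightsquigarrow (1-u)\bar s(z')$, enlarging the support until it coincides with that of $-\Sigma^2\eta_S$ — this is what handles your ``vacuous'' cell, and it is also where the sign emerges, rather than from the coordinate-swap heuristic alone. Because $H'$ and $H''$ are given by closed formulas valid on all of $\Sigma I(S)\wedge\Sigma\bar I(S)$, there is no seam matching to do. To turn your sketch into a proof you would need either to discover this unified reformulation or to carry out the seam analysis you postpone; as written, neither is done.
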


\begin{proof}
From the construction of $\eta_A, \eta_{A^*}, H$ and the definition of $\eta_C$, we have
\begin{gather} 
     \eta_C ( \varphi(t,z) \wedge \bar{\varphi}(t',z') ) = 
     \label{eq muC varphi}    \\
     \nonumber
    \left\{
       \begin{array}{ll}
   (a(s(z), t), 1-t', \eta_{A^*}(z \wedge z' \cdot (-\bar{s}(z')) ))   &
     \text{if}
       \left\{
         \begin{array}{l}
           1 - s(z) \leq t \leq 1, \\
           0 \leq t' \leq 1 - \bar{s}(z')
         \end{array}
       \right.
          \vspace{2mm}  \\         
    (a(\bar{s}'(z'), t'), t, \eta_A(z \cdot s(z) \wedge z' ) ) &
     \text{if} 
     \left\{
       \begin{array}{l}
           0 \leq t \leq 1-s(z), \\
           1 - \bar{s}(z') \leq t' \leq 1,
       \end{array}
    \right.  
         \vspace{2mm} \\    
      \left( 
       a( \bar{s}(z'), t'),
         H ( A(\zeta), z \wedge z')
      \right) 
     & \text{if}
      \left\{
       \begin{array}{l}
         1 - s(z) \leq t \leq 1, \\
         1 - \bar{s}(z') \leq t' \leq 1, \\
         a(s(z), t) \leq a( \bar{s}(z'), t'), \\
         a(\bar{s}(z'), t') \not= 0,
       \end{array}
      \right.   
         \vspace{2mm}    \\
       \left(a(s(z), t), 
       H \left( A'(\zeta), z \wedge z' \right)  
       \right)  
     & \text{if}
      \left\{
       \begin{array}{l}
         1 - s(z) \leq t \leq 1, \\
         1 - \bar{s}(z') \leq t' \leq 1, \\
         a(\bar{s}(z'), t') \leq a(s(z), t), \\
         a(s(z), t) \not= 0,
       \end{array}
      \right.  
        \vspace{2mm}    \\
    * & \text{if} 
        \left\{
         \begin{array}{l}
      0 \leq t \leq 1-s(x),  \\
      0 \leq t' \leq 1-\bar{s}(z').
         \end{array}
        \right.
       \end{array}
     \right.
\end{gather}
Here
\[
     \zeta = (t, t', z, z'), \quad
     A(\zeta) = \frac{a(s(z), t)}{2a( \bar{s}(z'), t')}, \quad
     A(\zeta') = 1 - \frac{a( \bar{s}(z'), t' )}{2 a(s(z),t)}.
\]

We can write as
\begin{gather} \label{eq mu_C}
    \mu_C( \varphi(t,z) \wedge  \bar{\varphi}(t',z')  ) \\
  = \left\{
      \begin{array}{ll}
        (s_1(\zeta), s_2(\zeta), l_1(\zeta) - l_2(\zeta)) & 
          \text{if} 
          \left\{
          \begin{array}{l}
          \| l_1(\zeta) - l_2(\zeta) \| < \epsilon, \\
          1 - s(z) \leq t \leq 1 \ \text{or} \ 1-\bar{s}(z') \leq t' \leq 1,
          \end{array}
          \right.
          \\
           * & \text{otherwise.}
      \end{array}                   \nonumber
    \right.
\end{gather}
Here  $s_j$ is a continuous function of $\zeta$ with values in $[0,1]$, and $l_j:N_S \rightarrow N_S$ is a continuous map.  By the construction, we can write
\[
    l_1(\zeta) = h_1(t_1(\zeta), z \cdot \tau_1(\zeta))
\]
using the homotopy (\ref{eq homotopy h_j}) and some continuous functions $t_1(\zeta)$ and $\tau_1(\zeta)$, and similarly we can write
\[
    l_2(\zeta) = h_2(t_2(\zeta), z' \cdot \tau_2(\zeta)).
\]
On the other hand
\begin{gather*}
   \eta_S( z \wedge z' ) = \\
     \left\{
      \begin{array}{ll}
        \tilde{m}_1 (z) - \tilde{m}_2(z') & 
         \text{ if $\| \tilde{m}_1(z) - \tilde{m}_2(z') \| < \epsilon,$ } \\
       * & \text{otherwise,}
      \end{array}
     \right.
\end{gather*}
where $\tilde{m}_j$ is the extension of $m_j$ satisfying (\ref{eq tilde m n}).
Define a homotopy $H'$ by
\begin{gather*}
       H':[0,1] \times ( \Sigma I(S) \wedge \Sigma \bar{I}(S) ) \rightarrow \Sigma^2 S^n \\
       H'(u, \zeta) =  \\
          \left\{
             \begin{array}{ll}
              (s_1(\zeta), s_2(\zeta),   H_1'(u, \zeta) - H_2'(u, \zeta))
                & \text{if} 
                   \left\{
                    \begin{array}{l}
                      \| H_1'(u, \zeta) - H_2'(u, \zeta) \| < \epsilon,  \\
                      1-s(z) \leq t \leq 1 \ \text{or} \
                      1-\bar{s}(z') \leq t' \leq 1,
                    \end{array}
                   \right.         \vspace{2mm}   \\
                * &  \text{otherwise.}
             \end{array}
          \right.
\end{gather*}
Here
\[ 
   \begin{split}
     & H_1'(u, \zeta) =
         h_1( (1-u)t_1(\zeta), z \cdot (1-u)\tau_1(\zeta)  ),  \\
     & H_2'(u, \zeta) =
         h_2( (1-u) t_2 (\zeta), z' \cdot (1-u) \tau_2(\zeta)).    
   \end{split}
\]
We can see that $H'$ is homotopy from $\eta_C \circ (\varphi \wedge \bar{\varphi})$ to
\begin{gather*}
      H'(1, \cdot) : \Sigma I(S) \wedge \Sigma I(S) \rightarrow \Sigma^2 S^n   \\
      H'(1,  \zeta) = \\
        \left\{
          \begin{array}{ll}
            (s_1(\zeta), s_2(\zeta), \tilde{m}_1(z) - \tilde{m}_2(z') &
             \text{if}
              \left\{
              \begin{array}{l}
                 \| \tilde{m}_1(z) - \tilde{m}_2(z') \| < \epsilon, \\
                 1-s(z) \leq t \leq 1 \ \text{or} \
                 1-\bar{s}(z') \leq t' \leq 1.
              \end{array}
              \right.            
          \end{array}
        \right.
\end{gather*}
From (\ref{eq muC varphi}) we can see that $s_j(\zeta)$ is a function of $t, t'$, $s(z)$ and $\bar{s}(z')$.
For $u \in [0, 1]$, let $s_j(u, \zeta)$ be the function obtained from $s_j(\zeta)$, replacing $s(z)$ and $\bar{s}(z')$ by $s(u, z) = (1-u) s(z) + u$ and $\bar{s}(u, z') = (1-u) \bar{s}(z')$ respectively. 
Define $H'':[0,1] \times (\Sigma I(S) \wedge \Sigma \bar{I}(S))  \rightarrow \Sigma^2 S^n$ by
\begin{gather*}
       H''(u, \zeta) =   \\
         \left\{
           \begin{array}{ll}
             (s_1(u, \zeta), s_2(u, \zeta), \tilde{m}_1(z) - \tilde{m}_2(z')  ) &
               \text{if} \left\{
                \begin{array}{l}
                  \| \tilde{m}_1(z) - \tilde{m}_2(z') \| < \epsilon, \\
                  1-s_1(u, \zeta) \leq t \leq 1 \
                    \text{or} \\
                  1-s_2(u, \zeta) \leq t' \leq 1,
                \end{array} 
                \right.   \\
             * & \text{otherwise.}
           \end{array}
        \right.
\end{gather*}
We can show that $H''$ is well defined and a homotopy from $H'(1, \cdot)$ to $-\Sigma^2 \eta_S$.
\end{proof}

\section{Seiberg-Witten-Floer stable homotopy type}


\subsection{Definition of stable homotopy category} \label{section category}
Following \cite{Mano b1=0} and \cite{Mar}, we introduce a category $\fC$ which we will need to define the Seiberg-Witten-Floer stable homotopy type.  An object of $\fC$ is a triple $(Z, m, n)$, where $Z$ is a pointed $U(1)$-topological space which is homotopy equivalent to a $U(1)$-CW complex  and $m \in \Z$, $n \in \Q$. For objects $(Z, m, n)$ and $(Z', m', n')$ in $\fC$, the set of morphisms from $(Z, m, n)$ to $(Z', m', n')$ is empty if $n-n' \not\in \Z$ and is defined by
\[
     \{ (Z, m, n), (Z', m', n') \}^{S^1} =
       \lim_{ \substack{ k \rightarrow \infty \\ l \rightarrow \infty} } 
       [ \Sigma^{\R^{k} \oplus \C^{l}}  Z, \ \Sigma^{\R^{k+m-m'} \oplus \C^{l+n-n'}} Z']_{0}^{S^1}.
\]
if $n - n' \in \Z$.
In $\fC$, the suspensions $( \Sigma^{\R} Z, m, n  )$ and $(\Sigma^{\C} Z, m, n)$ are canonically isomorphic to $(Z, m-1, n)$ and $(Z, m, n-1)$ respectively. For an object $\cZ = (Z, m, n) \in \Ob (\fC)$, we denote $(Z, m+m', n+n')$ by $(\cZ, m', n')$.
 Let $E$ be a direct sum of a real vector space $E_{\R}$ and a complex vector bundle $E_{\C}$. We define an $U(1)$-action on $E$ by the multiplications on $E_{\C}$. We define a desuspension $\Sigma^{-E} \cZ$ of $\cZ$ by $E$ to be
\[
        (\Sigma^{E} Z, m+2\dim_{\R} E_{\R}, n+2\dim_{\C} E_{\C}).
\]
We can see that $\Sigma^{E} \Sigma^{-E}\cZ$ and $\Sigma^{-E} \Sigma^E \cZ$ are canonically isomorphic to $\cZ$.

\subsection{Chern-Simons-Dirac functional}

Let $Y$ be an oriented, closed 3-manifold and choose a Riemannian metric $g$ and a spin-c structure $\fc$ of $Y$ with $c_1(\fc)$ torsion, where $c_1(\fc)$ is the first Chern class of the determinant line bundle of $\fc$. Write $\bbS$ for the spinor bundle on $Y$ associated with $\fc$. Fix a flat connection $A_0$ on $\det \fc$. 
The Chern-Simons-Dirac functional is defined by the following formula:
\begin{gather*}
    CSD:V =  
    L^2_{k+\frac{1}{2}} \left( \left(  \sqrt{-1}\ker d^* \right) \oplus \Gamma (\bbS) \right)
           \rightarrow \R,    \\
    CSD(a, \phi) = 
      - \frac{1}{2} \left( \int_Y a \wedge  da + 
         \int_Y \left< \phi, D_{A_0+a} \phi \right> d\mu_g \right).
\end{gather*}
Here $d^*:\Omega^1(Y) \rightarrow \Omega^0(Y)$ is the adjoint of $d:\Omega^0(Y) \rightarrow \Omega^1(Y)$,  $D_{A_0+a}$ is the twisted Dirac operator associated with $A_0 + a$. The critical points of $CSD$ are monopoles on $Y$ and the gradient flows of $V$ are monopoles on $Y \times \R$. We have an action of $U(1) \times H^1(Y;\Z)$ on $V$ defined as follows. Fix a point $y_0 \in Y$. For $h \in H^1(Y;\Z)$ we have a smooth map $g:Y \rightarrow U(1)$ such that $g^{-1} dg = h$ and $g(y_0) = 1$. Here we have considered $h$ to be a harmonic 1-form on $Y$. For $(z, h) \in U(1) \times H^1(Y;\Z)$, $(a, \phi) \in V$, we define
\[
      (z, h) \cdot (a, \phi) = (a-2h, z g \phi).
\] 
Using the fact that $c_1(\fc)$ torsion, we can see that $CSD$ is invariant under the action of $H^1(Y;\Z)$.
The gradient vector field $\nabla CSD$ of $CSD$ is given by
\[
      \nabla CSD:V \rightarrow V, \
      \nabla CSD(a, \phi) = (*da + q(\phi), D_{A_0+a} \phi). 
\]
Here $q(\phi)$ is a 1-form on $Y$ defined by $\rho^{-1}(\phi \otimes \phi^* - \frac{1}{2} |\phi|^2 \id)$ and $\rho$ is the Clifford multiplication.

\subsection{Spectral section} \label{subsec spectral section}

We need a tool called a spectral section to define the Seiberg-Witten-Floer stable homotopy type, which was introduced by Melrose and Piazza \cite{MP}.  Let $\cH^1_g(Y)$ be the space of harmonic 1-forms on $Y$.
For each harmonic 1-form $h \in \cH^1_g(Y)$, put
\[
       A_h = A_0 - 2\sqrt{-1} h.
\]
We have a family of Dirac operators $\bD_{\fc} = \{ D_{A_{h}} \}_{[h] \in \Pic(Y)}$ on $Y$ parametrized by $\Pic(Y) = H^1(Y;\R)/H^1(Y;\Z)$:
\[
   \begin{array}{rrcl}
        \bD_{\fc}: &
        \frac{\cH^1_g(Y) \times \Gamma(\bbS)}{H^1(Y;\Z)} &
        \rightarrow &
        \frac{\cH^1_g(Y) \times \Gamma(\bbS) }{H^1(Y;\Z)}  \\
        & [ h, \phi] &
        \mapsto &
        [ h, D_{A_h} \phi ]
   \end{array}
\]
Here we have used $\cH^1_g(Y) \cong H^1(Y;\R)$. 

\begin{dfn}
Let $\bP = \{ P_h \}_{[h] \in \Pic(Y)}$ be a family of self-adjoint projections on $L^2(\bbS)$ parametrized by $\Pic(Y)$. (For each $h \in H^1(Y;\R)$, we have an operator $P_h$ and $P_h$ is equivariant with respect to the $H^1(Y;\Z)$-action.) We call $\bP$ a spectral section of $\bD_{\fc}$ if there is a smooth function $R:\Pic(Y) \rightarrow \R$ such that if $D_{A_h} u = \lambda u$ for some $\lambda \in \R$ then 
  \[
       P_h u = 
         \left\{
           \begin{array}{ll}
             u & \text{if $\lambda > R(h)$}, \\
             0 & \text{if $\lambda < - R(h)$}.
           \end{array}
         \right.
  \]
\end{dfn}

The family $\bD_{\fc}$ of Dirac operators on $Y$ defines the index $\Ind \bD_{ \fc }$ as an element of $K^1(\Pic(Y))$. See \cite{APS III}. Suppose that
\[
      q_Y:\Lambda^3 H^1(Y;\Z) \rightarrow \Z, \
           c_1 \wedge c_2 \wedge c_3 \mapsto \left< c_1 \cup c_2 \cup c_3, [Y] \right>
\]
is trivial. This is equivalent to the condition that $\Ind \bD_{\fc} = 0 \in K^1(\Pic(Y))$. See \cite[Proposition 6]{KM ver 3}.  By Proposition 1 of \cite{MP}, the vanishing of $\Ind \bD_{\fc} \in K^1(\Pic(Y))$ implies the existence of a spectral section $\bP$ of $\bD_{\fc}$. Fix a spectral section $\bP$ of $\bD_{\fc}$. According to \cite{MP} we can construct a family of self-adjoint smoothing operators $\bB^{\bP} = \{ B^{\bP}_h \}_{[h] \in \Pic(Y)}$ parametrized by $\Pic(Y)$ with the following property:

\begin{enumerate}
  \item
  The image of $B^{\bP}_h$ is included in a subspace of $\Gamma(\bbS)$ spanned by a finite  number of eigenvectors of $D_{A_h}$.
  
  \item
  $D^{\bP}_h = D_{A_h} + B^{\bP}_h$ is invertible.
  
  \item
  The operator $P_h$ is the Atiyah-Patodi-Singer projection onto the positive eigenspace of $D^{\bP}_h$.

\end{enumerate}

\subsection{Transverse double system} 
\label{ss transverse}

From now on we assume that $b_1(Y) = 1$ and $c_1(\fc)$ is torsion.
Following \cite[Section 4]{KM ver 1}, we introduce a transverse double system. 
For $R>0$, put
\[
    Str(R) = \{ \ (a, \phi) \in V \ | \
     \exists h \in H^1(Y;\Z), \| h \cdot ( a, \phi) \|_{L^2_k} \leq R \ \}.
\]
Let $h_1 \in H^1(Y;\Z)$ be a generator. 
We have a natural decomposition $V= \sqrt{-1} (h_1 \R \oplus \im d^*) \oplus \Gamma (\bbS)$, where we consider $h_1$ as a harmonic 1-form on $Y$.  Let $p:V \rightarrow \sqrt{-1} h_1 \R \cong \R$ be the projection.

\begin{dfn} \label{dfn transverse}
A transverse double system is a pair $(f_1, f_2)$ of smooth functions $f_1, f_2:Str(R) \rightarrow \R$ having the following properties:

\begin{enumerate}
 \item
 There is a positive number $M>0$ such that $f_i(y) < 0$ if $p(y) < -M$ and $f_i(y) > 0$ if $p(y) > M$.
 
 \item
 If $f_i(y) \geq 0$ then $f_i(h_1 \cdot y) \geq 0$ for $i = 1, 2$.
 
 \item
 If $f_1(y) = 0$ then $\left< \nabla CSD(y), \nabla f_1(y) \right> > 0$, and if $f_2(y) = 0$ then $\left< \nabla CSD(y), \nabla f_2(y) \right> < 0$.

\end{enumerate}
\end{dfn}

\begin{lem}[\cite{KM ver 1}]
There exists a transverse double system.
\end{lem}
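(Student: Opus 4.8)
I would build $f_1$ and $f_2$ as modifications of the linear coordinate $p\colon V\to\sqrt{-1}h_1\R$, which already records the behaviour along the $H^1(Y;\Z)$-direction. The first point is that the $h_1$-action translates $p$ by a fixed nonzero step $c_0$ on the line $\sqrt{-1}h_1\R$ (we normalize so that $c_0>0$), so that $p(h_1\cdot y)=p(y)+c_0$. Consequently, for any constant $c$ the function $f=p-c$ (more generally, $p$ plus any $H^1(Y;\Z)$-invariant function) satisfies condition (1) with $M=|c|$ and satisfies condition (2), since $f(h_1\cdot y)=f(y)+c_0\ge f(y)$. So the whole difficulty is condition (3): the zero set of $f_i$ inside $Str(R)$ must avoid the critical set of $CSD$, and $\nabla CSD$ must point across it with the prescribed sign.

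The computation I would start from is $\langle\nabla CSD(a,\phi),\nabla p\rangle$. Since $h_1$ is harmonic, $\nabla p$ is a positive multiple of $(\sqrt{-1}h_1,0)$ in any of the relevant Sobolev inner products, the Dirac term $D_{A_0+a}\phi$ lies in the spinor factor, and the curvature term pairs to $\langle *da,\sqrt{-1}h_1\rangle_{L^2}=\pm\int_Y da\wedge h_1=0$ by Stokes, using $dh_1=0$. Hence
\[
\langle\nabla CSD(a,\phi),\nabla p\rangle\ =\ c\,\langle q(\phi),\sqrt{-1}h_1\rangle_{L^2}\ =\ O\bigl(\|\phi\|_{L^2_1}^2\bigr),\qquad c>0,
\]
and in particular this vanishes on the reducible locus $\{\phi=0\}$. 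Thus $p$ is already transverse to $\nabla CSD$ away from a neighbourhood of the reducibles, but not on it, and a perturbation is needed.

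The reducibles form the line $\{(t\sqrt{-1}h_1,0):t\in\R\}$, which runs exactly along the $p$-axis — the direction in which (1) and (2) pin down $f_i$. This is the main obstacle, and it forces $R$ to be small: if $Str(R)$ contained the whole reducible line, then by (1) any admissible $f_i$ would change sign along it and so vanish at some reducible, where $\langle\nabla CSD,\nabla f_i\rangle=0$, contradicting (3). So I would take $R<c_0\|h_1\|_{L^2_k}/2$; then $p\bigl(Str(R)\bigr)$ is contained in the disjoint union $\bigcup_{m\in\Z}[\,mc_0-R/\|h_1\|_{L^2_k},\ mc_0+R/\|h_1\|_{L^2_k}\,]$, and the reducibles inside $Str(R)$ are short disjoint arcs clustered around $c_0\Z$. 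After a small generic perturbation of $CSD$ making the finitely many relevant irreducible monopoles nondegenerate, I would pick the cutting values $c_1<c_2$ in the gaps of that union and off the (finitely many) $p$-values of those monopoles, and take $f_i=p-c_i$, bent near the reducible arcs: keep $f_i=p-c_i$ outside an $H^1(Y;\Z)$-invariant neighbourhood of the reducibles, and there add $\pm\varepsilon\,CSD$, so that $\langle\nabla CSD,\nabla f_i\rangle$ acquires the dominant term $\pm\varepsilon\|\nabla CSD\|^2$; the error $\langle\nabla CSD,\nabla p\rangle=O(R^2)$ is controlled by the displayed identity, $\|\nabla CSD\|$ is bounded below on the chosen levels, and invariance of the cutoff and of $CSD$ keeps (2) intact. (If one does not need the cuts to be honest transverse hypersurfaces, one may even take $c_i$ outside $p(Str(R))$ altogether, so that $\{f_i=0\}\cap Str(R)=\emptyset$ and (3) holds vacuously.) The delicate point in the bending version is balancing the lower bound for $\|\nabla CSD\|$ near the chosen levels against the size of $\|q(\phi)\|$ on $Str(R)$ — both shrink with $R$ — and keeping the Sobolev twisting $\phi\mapsto g_n\phi$ under control, which works only because the relevant $n$ stays bounded near those levels.
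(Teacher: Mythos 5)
Your reduction to the behaviour of $\nabla CSD$ on the reducible locus, and the computation $\langle\nabla CSD,\nabla p\rangle = c\,\langle q(\phi),\sqrt{-1}h_1\rangle$ via $\int_Y da\wedge h_1=0$, are correct and identify the genuine obstacle. (The paper gives no proof of this lemma, only the citation to \cite{KM ver 1}, so there is nothing to compare against line by line.) The gap is in the resolution you propose. You conclude, correctly, that if $Str(R)$ contains the whole harmonic reducible line $\{(\sqrt{-1}\,t\,h_1,0)\colon t\in\R\}$ then no $f_1$ satisfying (1) can satisfy (3), because $\nabla CSD$ vanishes identically on that line; and your fix is to take $R$ small enough that $Str(R)$ meets this line only in short disjoint arcs. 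But the construction this lemma feeds into requires $R$ to be \emph{large}: the paper explicitly asserts that $\Inv(W_n^i\cap V_{\lambda}^{\mu};\gamma_{\lambda}^{\mu})$ lies in the interior of $W_n^i\cap V_{\lambda}^{\mu}$ only ``when $R$, $-\lambda$ and $\mu$ are large enough,'' and this compactness is exactly what makes $I_{\lambda}^{\mu}(W_n^i)$, and hence $J(W_n^i)$, well defined. A transverse double system living only on a small $Str(R)$ --- and, a fortiori, the vacuous variant with $c_i\notin p(Str(R))$ --- does not produce the isolating neighbourhoods $W_n^i$ that the rest of the construction relies on. So either the lemma must be provable at large $R$, in which case the obstruction you found shows it cannot hold for the unperturbed $CSD$ and some $H^1(Y;\Z)$-equivariant perturbation (breaking the line of reducible critical points into a discrete set) is presumably being used silently in \cite{KM ver 1} and here; or the downstream construction would have to be reworked at small $R$. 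You have located the difficulty precisely, but the fix you offer is incompatible with how the lemma is actually used.

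The bending step is also not closed even on its own terms. On the non-harmonic reducibles in your small-$R$ regime you need $\varepsilon\chi\,\|\nabla CSD\|^2$ to dominate the error coming both from the cutoff $\chi$ and from $\langle q(\phi),\sqrt{-1}h_1\rangle$, yet $\|\nabla CSD\|\to 0$ as one approaches the harmonic arcs at the ends of the gap where $c_i$ sits, and your bounds on $\|q(\phi)\|$, on the lower bound for $\|\nabla CSD\|$, and on the Sobolev constants of the twisted action all degenerate together as $R\to 0$. You flag this balancing problem explicitly but leave the estimates open; that is precisely where the real content of a proof would have to live.
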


The third condition in Definition \ref{dfn transverse} means that the zero set of $f_i$ and the gradient flow of $CSD$ intersect transversely. Since $CSD$ is invariant under the action of $H^1(Y;\Z)$, the intersection of the set $h_1^n \{ y \in Str(R) | f_i(x) = 0 \}$ and the gradient flow is also transverse for each $n \in \Z$.

Fix a transverse double system $(f_1, f_2)$ and put 
\[
     A_n = h_1^n \{ \ y \in Str(R) \ | \ f_1(y) \leq 0 \ \}, \
     B_n = h_1^n \{ \ y \in Str(R) \ | \ f_2(y) \leq 0 \ \}.
\]
It follows from the second property in Definition \ref{dfn transverse} that
\[
    A_n \subset A_{n+1}, \ B_n \subset B_{n+1}
\]
for every integer $n$. Let
\[
      U_n = A_{n+1} \backslash A_n, \
      V_n = B_{n+1} \backslash B_n.
\]
From the first condition in Definition \ref{dfn transverse}, $p(U_n)$ and $p(V_n)$ are bounded in $\R$. This means that $U_n$ and $V_n$ are bounded. Hence $U_n$ intersect only finite many $V_n$'s.
Without loss of generality, we may suppose that they are $V_{n+1}, V_{n+2},\dots, V_{n+N}$.
Put
\[
     W_n^i = U_n \cap V_{n+i}
\]
for $i=1, \dots , N$.

\subsection{Conley index of $W^i_n$}

As in the previous subsection, suppose that $b_1(Y) = 1$ and $c_1(\fc)$ is torsion. Note that when $b_1(Y)=1$, $q_Y$ is always trivial.
Fix a spectral section $\bP$ of $\bD_{\fc}$.
We can decompose the gradient vector field $\nabla CSD$ of $CSD$ as $l^{\bP}_0 + c^{\bP}_0$, where $l^{\bP}_0 = *d \oplus D^{\bP}_0:V \rightarrow V$, $c^{\bP}_0 = \nabla CSD - l^{\bP}_0:V \rightarrow V$ is a compact map, and $D^{\bP}_0 = D_{A_0} + B^{\bP}_0$.
Choose real numbers $\lambda, \mu$ with $\lambda < \mu$, and let $V_{\lambda}^{\mu} = V_{\lambda}^{\mu}(A_0,g,\bP)$ be the subspace of $V$ spanned by eigenvectors of $l^{\bP}_0$ with eigenvalues in $(\lambda, \mu]$. We denote the $L^2$-projection $V \rightarrow V_{\lambda}^{\mu}$ by $p_{\lambda}^{\mu}$. Let $\gamma_{\lambda}^{\mu} = \gamma_{\lambda}^{\mu, A_0, g, \bP}$ be the flow on $V_{\lambda}^{\mu}$ induced by $\nabla_{\lambda}^{\mu} CSD := l^{\bP}_0 + p_{\lambda}^{\mu}c^{\bP}_0 :V_{\lambda}^{\mu} \rightarrow V_{\lambda}^{\mu}$.

The maximal invariant set $\Inv(W_n^i \cap V_{\lambda}^{\mu}; \gamma_{\lambda}^{\mu})$ of $\gamma_{\lambda}^{\mu}$ in $W^i_n \cap V_{\lambda}^{\mu}$ lies in the interior of $W_n^i \cap V_{\lambda}^{\mu}$ when $R$, $-\lambda$ and $\mu$ are large enough. (See \cite{KM ver 1}.) This means that we can define the Conley index $I_{\lambda}^{\mu}(W_{n}^i) = I_{\lambda}^{\mu}(W_n^i;A_0, g, \bP)$ of $\Inv ( W_n^i \cap V_{\lambda}^{\mu}; \gamma_{\lambda}^{\mu} )$. As in \cite{Mano b1=0} we can show the following:

\begin{lem} \label{lem h.e. lambda mu}
For large  $-\lambda, \mu > 0$, $\Sigma^{-V_{\lambda}^0} I_{\lambda}^{\mu}(W_n^i)$ is independent of the choice of $\lambda, \mu$ up to canonical homotopy equivalence.
\end{lem}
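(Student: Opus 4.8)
The plan is to follow the finite-dimensional-approximation argument of Manolescu in \cite{Mano b1=0}, adapted to the present setting where the linear part $l_0^{\bP}$ is built from a Dirac operator stabilized by a spectral section. The statement asserts that the object $\Sigma^{-V_\lambda^0} I_\lambda^\mu(W_n^i)$ of $\fC$ does not depend, up to canonical isomorphism, on the cutoffs $\lambda,\mu$. Since $V_\lambda^0$ appears as a desuspension, the two pieces of data to track are (i) how $I_\lambda^\mu(W_n^i)$ changes as one enlarges the spectral window, and (ii) the compensating change in $V_\lambda^0$, so that the desuspension absorbs it. First I would reduce to two elementary moves: replacing $\mu$ by a larger $\mu' > \mu$ with $\lambda$ fixed, and replacing $\lambda$ by a smaller $\lambda' < \lambda$ with $\mu$ fixed; a general pair of windows is connected to any other by a chain of such moves, so it suffices to produce a canonical homotopy equivalence for each.

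For the first move ($\mu \to \mu'$, $\lambda$ fixed), the point is that the extra eigenvalues lie in $(\mu,\mu'] \subset (0,\infty)$, so the corresponding summand $V_\mu^{\mu'}$ of $V_\lambda^{\mu'}$ carries only the linear, expanding part of the flow $\gamma_\lambda^{\mu'}$ after projection (the nonlinear term $p_\lambda^{\mu'}c_0^{\bP}$ restricted there is negligible in the relevant region by the same estimates that make $\Inv(W_n^i\cap V_\lambda^{\mu'};\gamma_\lambda^{\mu'})$ interior, cf.\ \cite{KM ver 1}). Hence the isolated invariant set for $\gamma_\lambda^{\mu'}$ in $W_n^i\cap V_\lambda^{\mu'}$ is, up to the flow being a product in the new directions, the same as that for $\gamma_\lambda^\mu$, and a standard Conley-index product/continuation argument gives
\[
   I_\lambda^{\mu'}(W_n^i) \;\simeq\; I_\lambda^\mu(W_n^i)
\]
canonically (there is no suspension, because a linear flow with only positive eigenvalues has trivial Conley index $S^0$, and $V_\lambda^0$ is unchanged since $0<\mu<\mu'$). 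For the second move ($\lambda' < \lambda$, $\mu$ fixed), the new directions $V_{\lambda'}^\lambda$ carry the linear, contracting part of the flow; its Conley index is the one-point compactification $(V_{\lambda'}^\lambda)^+$, i.e.\ a sphere of dimension $\dim_{\R} V_{\lambda'}^\lambda$, with the appropriate $U(1)$-action coming from the complex directions. The product structure of the flow again yields
\[
   I_{\lambda'}^\mu(W_n^i) \;\simeq\; \Sigma^{V_{\lambda'}^\lambda}\, I_\lambda^\mu(W_n^i),
\]
and since $V_{\lambda'}^0 = V_{\lambda'}^\lambda \oplus V_\lambda^0$, desuspending by $V_{\lambda'}^0$ on the left and by $V_\lambda^0$ on the right produces the same object of $\fC$; the canonicity of the identification is exactly the naturality of the attractor-repeller and product constructions recorded in Section 2.

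The main obstacle, and where care is needed, is verifying that the isolated invariant set genuinely splits as a product along the newly added eigen-directions — i.e.\ that for $R,-\lambda,\mu$ large the nonlinear term $p_\lambda^\mu c_0^{\bP}$ does not create trajectories that wander into the new directions inside the isolating neighborhood. This is the analytic heart, and it rests on the compactness of $c_0^{\bP}$ together with the a priori $L^2_k$ bounds on $Str(R)$ and on approximate Seiberg-Witten trajectories; the argument is parallel to the one already invoked in the previous subsection to place $\Inv(W_n^i\cap V_\lambda^\mu;\gamma_\lambda^\mu)$ in the interior of $W_n^i\cap V_\lambda^\mu$, following \cite{KM ver 1} and \cite{Mano b1=0}, the only new wrinkle being that the operator $D_0^{\bP} = D_{A_0}+B_0^{\bP}$ has the same eigenvalue asymptotics and functional-analytic behaviour as $D_{A_0}$ because $B_0^{\bP}$ is a smoothing operator. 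Once the product splitting is in hand, the Conley-index bookkeeping and the identification in $\fC$ are routine.
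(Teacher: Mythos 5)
Your proposal follows essentially the same route as the paper — continuation of Conley indices plus a product-flow decomposition, with the desuspension bookkeeping handled exactly as you describe — but there is an imprecision in how you invoke the product structure. You frame the key step as showing that the invariant set ``genuinely splits as a product along the newly added eigen-directions'' for the flow $\gamma_\lambda^{\mu'}$ itself, with the nonlinear term ``negligible'' there. That is not quite right: the vector field $l_0^{\bP}+p_\lambda^{\mu'}c_0^{\bP}$ on $V_\lambda^{\mu'}$ genuinely couples all directions, and there is no reason for its invariant set or flow to be a product. The paper instead interpolates the \emph{projection}: it sets $p_t=(1-t)p_{\lambda'}^{\mu'}+tp_\lambda^{\mu}$ and considers the one-parameter family of flows $\gamma_t$ on $V_{\lambda'}^{\mu'}$ generated by $l+p_tc\,p_t$. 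At $t=0$ this is $\gamma_{\lambda'}^{\mu'}$; at $t=1$ the vector field is $l|_{V'}\times(l+p_\lambda^\mu c)$ on $V_{\lambda'}^{\mu'}=V'\times V_\lambda^\mu$, which \emph{is} a product, with the $V'$ factor purely linear. The analytic content — the same compactness estimates you identify as the ``main obstacle'' — is used to show that $W_n^i\cap V_{\lambda'}^{\mu'}$ remains an isolating neighborhood for every $t\in[0,1]$, and then continuation gives the canonical homotopy equivalence $I_{\lambda'}^{\mu'}(W_n^i;\gamma_0)\to I_{\lambda'}^{\mu'}(W_n^i;\gamma_1)=\Sigma^{V_{\lambda'}^\lambda}I_\lambda^\mu(W_n^i;\gamma_\lambda^\mu)$. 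So the product structure is bought by the deformation, not present from the start. Your two-move reduction and the suspension/desuspension arithmetic (positive directions contributing $S^0$, negative directions contributing $(V_{\lambda'}^\lambda)^+$, and $V_{\lambda'}^0=V_{\lambda'}^\lambda\oplus V_\lambda^0$) are correct and match the paper; it is only the mechanism for passing from the coupled flow to the product flow that needs to be stated as a continuation along $p_t$ rather than an approximate splitting of the original flow.
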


\begin{proof}
We may suppose that $\lambda' < \lambda$ and $\mu' > \mu$. For each $t \in [0, 1]$, put
\[
     p_t = (1-t)p_{\lambda'}^{\mu'} + tp_{\lambda}^{\mu}:
        V \rightarrow V_{\lambda'}^{\mu'}.
\]
Here we have used the fact that $V_{\lambda}^{\mu} \subset V_{\lambda'}^{\mu'}$. Consider the flow $\gamma_t$ on $V_{\lambda'}^{\mu'}$ defined by the vector field
\[
      l + p_{t} c p_t:V_{\lambda'}^{\mu'} \rightarrow V_{\lambda'}^{\mu'}.
\]
It is easy to see that if $R>0$, $-\lambda, -\lambda', \mu$ and $\mu'$ are large enough, $W_{n}^i \cap V_{\lambda'}^{\mu'}$ is an isolating neighborhood of $\Inv (W^i_n \cap V_{\lambda'}^{\mu'}; \gamma_t)$ for any $t \in [0, 1]$. Hence we have the canonical homotopy equivalence
\[
    I_{\lambda'}^{\mu'}(W^i_n; \gamma_{\lambda'}^{\mu'}) 
    = I_{\lambda'}^{\mu'}(W^i_n;\gamma_0) 
    \stackrel{\sim}{\rightarrow} I_{\lambda'}^{\mu'}(W^i_n; \gamma_1)
\]
defined as (\ref{eq f_T}). The flow $\gamma_1$ is equal to the flow defined by $l|_{V'} \times (l + p_{\lambda}^{\mu} c )$ on $V_{\lambda'}^{\mu'} = V' \times V_{\lambda}^{\mu}$. Here $V'$ is the orthogonal complement of $V_{\lambda}^{\mu}$ in $V_{\lambda'}^{\mu'}$. Therefore we have
\[
        I_{\lambda'}^{\mu'}(W^i_n; \gamma_1) = 
           \Sigma^{V_{\lambda'}^{\lambda}} I_{\lambda}^{\mu}(W_n^i; \gamma_{\lambda}^{\mu})
\]
Thus we obtain a canonical isomorphism
\[
     \Sigma^{ -V_{\lambda'}^0 } I_{\lambda'}^{\mu'}(W_n^i) \stackrel{\cong}{\rightarrow}
        \Sigma^{ - V_{\lambda}^0 } I_{\lambda}^{\mu} (W_n^i).
\]
\end{proof}

We put
\[
        J(W_n^i) = J(W_n^i;A_0, g, \bP) := \Sigma^{-V_{\lambda}^{0}} I_{\lambda}^{\mu}(W_n^i)
         \in \Ob(\fC).
\]

\begin{rem}
If $-\lambda, \mu \gg 0$, the Conley index $I_{\lambda}^{\mu}(W_n^i;A_0, g, \bP)$ is independent of the choice of $\bP$ up to canonical homotopy equivalence since the image of $B^{\bP}_0$ is included in a finite number of eigenvectors of $D_{A_0}$.   Hence $J(W^i_n)$ depends on $\bP$ only through $V_{\lambda}^0 = V_{\lambda}^0(A_0, g, \bP)$.
\end{rem}

\subsection{Isomorphism between $J(W^i_n)$ and $J(W^i_{n+1})$}
\label{subsec Iso f}

In this subsection, we will see that $J(W_n^i)$ and $J(W_{n+1}^i)$ are canonically isomorphic to each other and write the isomorphism explicitly.
We have the isomorphism induced by the gauge transformation:
\begin{equation} \label{eq hz}
   \begin{array}{ccc}
      J(W^i_{n}; A_0, g, \bP)  & \stackrel{\cong}{\rightarrow}  
                                  & J(W^i_{n+1}; A_0 - 2 \sqrt{-1} h_1, g, \bP) \\
           y        &  \mapsto & h_1 y
   \end{array}
\end{equation}
Here $h_1$ is the fixed generator of $H^1(Y;\Z)$.
For $s \in [-1, 0]$, put $A_s := A_0 + 2s \sqrt{-1}h_1$. Write $\tilde{D}_s = D_{A_s} + B^{\bP}_{sh_1}$.
Fix $s \in [-1, 0]$.  We can find $-\lambda, \mu \gg 0$ such that $\lambda$ and $\mu$ are not an eigenvalue of $\tilde{D}_{s}$. Take $s' \in [0, 1]$ with $s<s'$, $| s - s'| \ll 1$. Then $\lambda$ and $\mu$ are still not an eigenvalue of $\tilde{D}_{s'}$, and the dimension $\dim V_{\lambda}^{\mu}(A_{ s'' }, g,  \bP)$ is independent of $s'' \in [s, s']$.  The restriction of the $L^2$-projection $p_{\lambda, s'}^{\mu}:V \rightarrow V_{\lambda}^{\mu}(s') = V_{\lambda}^{\mu}(A_{s'}, g, \bP)$ to $V_{\lambda}^{\mu}(s) = V_{\lambda}^{\mu}(A_s, g, \bP)$ gives an isomorphism
\[
    \tilde{f}_{s s'}:V_{\lambda}^{\mu}(s)
      \stackrel{\cong}{\rightarrow}
        V_{\lambda}^{\mu}(s').
\]

\begin{lem} \label{lem f T_0}
We can take $T_0 > 0$ independent of $\lambda$ and $\mu$ such that  for $T > T_0$, large $-\lambda, \mu$ and $s' > s$ with $|s-s'|$ small, we can define a $U(1)$-equivariant homotopy equivalence $\hat{f}_{s,s';T}:I_{\lambda}^{\mu}(W_{n+1}^i;A_s) \rightarrow I_{\lambda}^{\mu}(W_{n+1}^i;A_{s'})$.
\end{lem}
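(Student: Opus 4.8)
The plan is to construct $\hat{f}_{s,s';T}$ by the same flow-along-time recipe used for the map $f_T$ in \eqref{eq f_T}, but applied to the two isolating neighborhoods $W_{n+1}^i \cap V_{\lambda}^{\mu}(s)$ and $W_{n+1}^i \cap V_{\lambda}^{\mu}(s')$, which sit in the \emph{different} ambient vector spaces $V_{\lambda}^{\mu}(s)$ and $V_{\lambda}^{\mu}(s')$ but are identified via the projection isomorphism $\tilde{f}_{ss'}$. First I would fix, as in the discussion following Fact~2.2, index pairs $(N,L)$ for $\Inv(W_{n+1}^i \cap V_{\lambda}^{\mu}(s);\gamma_{\lambda}^{\mu,A_s})$ and $(N',L')$ for $\Inv(W_{n+1}^i \cap V_{\lambda}^{\mu}(s');\gamma_{\lambda}^{\mu,A_{s'}})$, transport $(N',L')$ through $\tilde{f}_{ss'}^{-1}$ so that everything lives in $V_{\lambda}^{\mu}(s)$, and then define $\hat{f}_{s,s';T}$ by the formula
\[
   z \mapsto
     \left\{
       \begin{array}{ll}
         \gamma_{\lambda}^{\mu,A_s}(z,3T) & \text{if $\gamma_{\lambda}^{\mu,A_s}(z,[0,2T]) \subset N\backslash L$ and $\tilde{f}_{ss'}(\gamma_{\lambda}^{\mu,A_s}(z,[T,3T])) \subset N'\backslash L'$,} \\
         * & \text{otherwise,}
       \end{array}
     \right.
\]
using the $A_s$-flow to push a point forward, then comparing with $N'\backslash L'$ after applying $\tilde{f}_{ss'}$.

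The key technical input, which must hold uniformly, is that for $|s-s'|$ small the two flows $\gamma_{\lambda}^{\mu,A_s}$ and $\gamma_{\lambda}^{\mu,A_{s'}}$ (the latter transported to $V_{\lambda}^{\mu}(s)$ by $\tilde{f}_{ss'}$) are $C^0$-close on the compact set $W_{n+1}^i \cap V_{\lambda}^{\mu}(s)$, because $\tilde{D}_s$ depends continuously on $s$, the perturbation $B^{\bP}_{sh_1}$ is a continuous family of smoothing operators, and the compact term $c^{\bP}$ varies continuously; hence any trajectory of $\gamma_{\lambda}^{\mu,A_s}$ that stays in the \emph{interior} of the isolating neighborhood for a time interval of definite length is shadowed by a trajectory of the transported $\gamma_{\lambda}^{\mu,A_{s'}}$ staying in that interior. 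This is exactly the situation of \cite[Section 4]{Sala}, where the well-definedness, continuity, and homotopy-equivalence properties of $f_T$ are established for two index pairs of the \emph{same} isolated invariant set under a flow; the continuation here is across a small parameter change rather than a change of index pair, but the proof is identical once one notes that for $-\lambda,\mu,R$ large and $|s-s'|$ small, $W_{n+1}^i \cap V_{\lambda}^{\mu}(s'')$ is an isolating neighborhood for every $s''$ between $s$ and $s'$ — this is precisely the statement recalled in the paragraph before Lemma~\ref{lem h.e. lambda mu} together with the observation in the proof of Lemma~\ref{lem f T_0}'s predecessors that $\dim V_{\lambda}^{\mu}$ is constant over $[s,s']$. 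I would then invoke the standard Conley-index continuation argument: choosing $T_0$ to exceed the (finite, $\lambda,\mu$-independent) transit time needed to cross the collar between the two index pairs, the map $\hat{f}_{s,s';T}$ is well defined and continuous for $T>T_0$, and by running the analogous construction with $s$ and $s'$ interchanged one gets a homotopy inverse, so $\hat{f}_{s,s';T}$ is a homotopy equivalence; $U(1)$-equivariance is automatic because every ingredient — the flows, the projections, the index pairs chosen $U(1)$-invariantly — is $U(1)$-equivariant.

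The $\lambda,\mu$-independence of $T_0$ is the one point that needs a little care, and I expect it to be the main obstacle: a priori the transit time across the isolating collar could blow up as $-\lambda,\mu\to\infty$. The resolution is that the relevant dynamics happens in a fixed-dimensional piece — one argues, as in \cite[Section 5]{Sala} and \cite{Mano b1=0}, that on the high part $V'$ of $V_{\lambda}^{\mu}$ the flow is essentially linear and dominated by $l^{\bP}_0$ with large eigenvalues, so trajectories that matter are confined, for the crossing estimate, to a region whose geometry is controlled by the fixed data $(R, M, CSD)$ rather than by $\lambda,\mu$. Once that uniform transit bound is in hand, everything else is a routine translation of \cite[Section 4]{Sala} to the present family setting, and I would simply cite that reference for the verification that the flow-defined map is well defined, continuous, and a homotopy equivalence.
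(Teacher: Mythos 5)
The construction you propose for $\hat{f}_{s,s';T}$ — identify the two finite-dimensional ambient spaces via $\tilde{f}_{ss'}$, pick index pairs on each side, and write down the flow-along-time formula as in (\ref{eq f_T}) — is exactly what the paper does, so the setup is right. The gap is in the uniformity claim, which you correctly identify as the crux but then argue for with a heuristic that does not actually close it.

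Your suggested resolution is that "on the high part $V'$ of $V_{\lambda}^{\mu}$ the flow is essentially linear and dominated by $l^{\bP}_0$," so the transit time stays controlled. That statement is true locally near the high-eigenvalue directions, but it does not control the time a trajectory takes to exit the isolating neighborhood, because the relevant trajectories live mostly in the low-eigenvalue part and the failure mode you need to rule out is a trajectory lingering for an unboundedly long time near the boundary of $N \backslash L$ without yet satisfying the inclusion in $N'\backslash L'$. The linearity-on-the-high-part observation is the standard ingredient for establishing the isolating-neighborhood property itself (the paragraph before Lemma~\ref{lem h.e. lambda mu}), but it does not by itself yield a $\lambda,\mu$-independent transit bound. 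What the paper actually uses is a proof by contradiction powered by the Seiberg--Witten compactness theorem: if no uniform $T_0$ existed, one would have sequences $T_\alpha, -\lambda_\alpha, \mu_\alpha \to \infty$, $s'_\alpha \searrow s$, and points $y_\alpha$ with $y_\alpha\cdot[0,T_\alpha]\subset N_\alpha\backslash L_\alpha$ yet $\tilde{f}_{ss'_\alpha}(y_\alpha)\notin N'_\alpha\backslash L'_\alpha$. Because these partial trajectories all stay inside the fixed bounded set $W^i_{n+1}$, they have uniformly bounded energy, so (by \cite[Section 5]{KM Three manifold}) a subsequence converges on compacta to a finite-energy trajectory $\hat{x}:[0,\infty)\to V$ whose limit $\hat{x}(\infty)$ is a critical point of $CSD$ inside $W^i_{n+1}$; meanwhile the failure of the inclusion forces the limit to land in some $W^j_m$ with $j>i$, a contradiction. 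This infinite-dimensional compactness input is what replaces (and cannot be replaced by) a finite-dimensional transit-time estimate, and it is missing from your argument.
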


\begin{proof}
Let $(N, L)$ and $(N', L')$ be index pairs for $\Inv (W_{n+1}^i \cap V_{\lambda}^{\mu}(s))$ and $\Inv (W_{n+1}^i  \cap V_{\lambda}^{\mu}(s') )$ such that $N \subset W_{n+1}^i \cap V_{\lambda}^{\mu}(s)$, $N' \subset W_{n+1}^i \cap V_{\lambda}^{\mu}(s')$. 
Identifying $V_{\lambda}^{\mu}(s)$ and $V_{\lambda}^{\mu}(s')$ with $\tilde{f}_{ss'}$, we want to define $\hat{f}_{s,s':T}$ by the formula (\ref{eq f_T}).
We need to show that we can find $T_0 > 0$ independent of $\lambda, \mu$ such that for $T > T_0$, large $-\lambda, \mu > 0$ and $s' > s$ with $|s-s'|$ small, we have
\begin{gather}
    \begin{split} \label{eq z T N}
      y \cdot [0, T] \subset N \backslash L \Rightarrow  \tilde{f}_{ss'}(y) \in N' \backslash L', \\
      y \cdot [0, T] \subset N' \backslash L' \Rightarrow  \tilde{f}^{-1}_{ss'}(y) \in N \backslash L.
    \end{split}
\end{gather}
Suppose that the first condition in (\ref{eq z T N}) does not hold. Then there exist sequences $T_{\alpha}, -\lambda_{\alpha}, \mu_{\alpha} \rightarrow \infty$,  $s_{\alpha}' \searrow s $  and sequences $(N_{\alpha}, L_{\alpha})$, $(N_{\alpha}', L_{\alpha}')$ of index pairs of $\Inv(W^i_{n+1} \cap V_{\lambda_{\alpha}}^{\mu_{\alpha}}(s))$, $\Inv( W^i_{n+1} \cap V_{\lambda_{\alpha}}^{\mu_{\alpha}}(s') )$ with $N_{\alpha}, N_{ \alpha}' \subset W^i_{n+1}$ such that
\[
     \exists y_{\alpha} \in N_{s, \alpha}, \
     y_{\alpha} \cdot [0,T_{\alpha}] \subset N_{\alpha} \backslash L_{\alpha}, \
     \tilde{f}_{s s'_{\alpha}}(y_{\alpha}) \not\in N_{\alpha}' \backslash L_{\alpha}'.
\]
Since $y_{\alpha} \cdot [0, T_{\alpha}] \subset W^i_n$, the energy of the trajectory
\[
     \hat{x}_{\alpha}:[0, T_{\alpha}] \rightarrow V, \
     \hat{x}_{\alpha}(T) = y_{\alpha} \cdot T
\]
is bounded by a constant independent of $\alpha$. This implies that there is a subsequence $\alpha'$ such that $x_{\alpha'}$ converges to a finite energy trajectory
\[
      \hat{x}:[0, \infty) \rightarrow V
\]
on each compact set in $[0, \infty)$ (See \cite[Section 5]{KM Three manifold}) , and the limit $\hat{x}(\infty)$ is a critical point of $CSD$ in $W^{i}_{n+1}$. On the other hand, the condition that $\tilde{f}_{s s'_{\alpha}}(y_{\alpha'}) \not\in N_{\alpha'}' \backslash L_{\alpha'}'$ implies that the limit $\hat{x}(\infty)$ should be in $W^{j}_{m}$ for some $j> i$ and $m$. This is a contradiction.
The proof for the second condition in (\ref{eq z T N}) is similar.

\end{proof}

Taking desuspension, we get an isomorphism
\begin{equation} \label{eq h.e. A_s}
     f_{s s'}:J(W^i_{n+1}; A_s, g, \bP) 
         \stackrel{\cong}{\rightarrow}
           J(W^i_{n+1}; A_{s'}, g, \bP)
\end{equation}
in $\fC$. Here we have used the fact that the $L^2$-projection gives an isomorphism
\[
       V_{\lambda}^0(s) \stackrel{\cong}{\rightarrow} V_{\lambda}^0(s')
\]
since there is no spectral flow for the family $\{ \tilde{D}_{s''} \}_{s'' \in [s,s']}$.

\begin{lem} \label{lem indep lambda mu}
The morphism $f_{s s'}$ is independent of the choices of $\lambda$ and $\mu$ up to canonical homotopy.
\end{lem}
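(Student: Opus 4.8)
The plan is to reduce the independence of $f_{ss'}$ from the cutoffs to the kind of Conley-index argument already used in Lemma~\ref{lem h.e. lambda mu}, combined with the fact — established in the proof of Lemma~\ref{lem f T_0} — that the trajectory-based map $\hat f_{s,s';T}$ stabilizes for $T$ larger than a threshold $T_0$ that does not depend on $\lambda,\mu$. Concretely, suppose we make two choices of cutoffs, $(\lambda,\mu)$ and $(\lambda',\mu')$, and (by the usual argument) assume $\lambda' < \lambda$ and $\mu' > \mu$, neither being an eigenvalue of any $\tilde D_{s''}$ for $s'' \in [s,s']$. Write $V' $ for the orthogonal complement of $V_\lambda^\mu(A_{s''},g,\bP)$ in $V_{\lambda'}^{\mu'}(A_{s''},g,\bP)$, so that $V_{\lambda'}^{\mu'}(s'') = V'(s'') \times V_\lambda^\mu(s'')$; as in the proof of Lemma~\ref{lem h.e. lambda mu}, on $V'$ the relevant flow is just the linear flow of $l^{\bP}$, whose Conley index contributes only a suspension coordinate $\Sigma^{V'}$ that is canceled on desuspension.

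First I would set up the comparison diagram. On $I_{\lambda}^{\mu}(W_{n+1}^i;A_s)$ we have the canonical homotopy equivalence of Lemma~\ref{lem h.e. lambda mu} landing in $\Sigma^{V'(s)} I_{\lambda'}^{\mu'}(W_{n+1}^i;A_s)$ (implemented by an $f_T$-type map, so it is a suspension of the identity on the $V'$-coordinate together with a flow-truncation on the $V_\lambda^\mu$-coordinate), and similarly at parameter $s'$. The claim is the square
\[
\begin{CD}
I_\lambda^\mu(W_{n+1}^i;A_s) @>{\hat f_{s,s';T}}>> I_\lambda^\mu(W_{n+1}^i;A_{s'}) \\
@VVV @VVV \\
\Sigma^{V'(s)} I_{\lambda'}^{\mu'}(W_{n+1}^i;A_s) @>{\Sigma^{V'}\hat f_{s,s';T'}}>> \Sigma^{V'(s')} I_{\lambda'}^{\mu'}(W_{n+1}^i;A_{s'})
\end{CD}
\]
is homotopy commutative once $T,T'$ exceed the common threshold $T_0$ of Lemma~\ref{lem f T_0}. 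Both routes around the square are built from the same two operations — truncating along the gradient flow of $\nabla_{\cdot}^{\cdot}CSD$ for a long time, and applying an $L^2$-projection between eigenspaces — and these operations commute up to homotopy: the key point is that the $L^2$-projections $p_{\lambda,s'}^\mu$ and $p_{\lambda',s'}^{\mu'}$ are compatible (the latter restricted to $V_\lambda^\mu$ is the former, up to the $V'$-factor), so the diagram of projections commutes on the nose, while the flow-truncations agree up to homotopy by a straight-line homotopy of the kind used to define the $f_T$'s, using that $W_{n+1}^i \cap V_{\lambda'}^{\mu'}(s'')$ is an isolating neighborhood uniformly in the interpolation parameter.

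The step I expect to be the main obstacle is making the two flow-truncations genuinely match up: the trajectories of $\gamma_\lambda^{\mu}$ on $V_\lambda^\mu(s)$ and of $\gamma_{\lambda'}^{\mu'}$ on $V_{\lambda'}^{\mu'}(s)$ are not literally the same curves, only close, so "apply $\hat f_{s,s';T}$ then include" and "include then apply $\Sigma^{V'}\hat f_{s,s';T'}$" differ by a comparison that must itself be controlled by a compactness-of-finite-energy-trajectories argument exactly as in Lemma~\ref{lem f T_0} — one argues by contradiction that if the relevant truncated trajectories failed to land in matching index pairs for all large cutoffs, a subsequence would converge to a finite-energy trajectory limiting to a critical point in some $W_m^j$ with $j>i$, which is impossible. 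Granting that, one concatenates the resulting homotopies and desuspends by $V_{\lambda'}^0(s)$ (equivalently $V_\lambda^0(s)$ up to the canonical identification, there being no spectral flow in the family $\{\tilde D_{s''}\}$) to conclude that the two versions of $f_{ss'}$ in $\fC$ agree up to canonical homotopy. Finally, a short check that the resulting homotopy is itself independent of the auxiliary $T$ (again because all choices $T>T_0$ are joined by a canonical homotopy) gives the "canonical" in the statement.
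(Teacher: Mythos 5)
Your proposal takes essentially the same route as the paper: one compares the two choices of cutoff by setting up a square whose columns are the canonical homotopy equivalences from Lemma~\ref{lem h.e. lambda mu} and whose rows are the $\hat{f}_{s s'}$-maps at the two cutoff levels, checking that the square commutes up to canonical homotopy, and then desuspending. The paper disposes of the commutativity of that square in a single sentence (``it follows from the construction of $\hat f_{ss'}$''), whereas you supply the justification: the $L^2$-projections are compatible and commute on the nose, and the flow truncations agree up to homotopy via the same compactness-of-finite-energy-trajectories argument underlying Lemma~\ref{lem f T_0} (with $T_0$ chosen uniformly in $\lambda,\mu$). This is the correct and intended argument. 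One small imprecision: the Conley-index contribution of the linear flow on $V' = V_{\lambda'}^{\lambda}\oplus V_{\mu}^{\mu'}$ is $\Sigma^{V_{\lambda'}^{\lambda}}$ (the unstable subspace only), not $\Sigma^{V'}$; this is what the paper writes in both Lemma~\ref{lem h.e. lambda mu} and in the displayed square, and it is what makes the desuspension by $V_{\lambda'}^0$ versus $V_{\lambda}^0$ cancel correctly. The imprecision does not affect the structure of your argument, but your comparison square should carry $\Sigma^{V_{\lambda'}^{\lambda}(A_{s})}$ and $\Sigma^{V_{\lambda'}^{\lambda}(A_{s'})}$ rather than $\Sigma^{V'(s)}$ and $\Sigma^{V'(s')}$.
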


\begin{proof}
Take $\lambda' < \lambda \ll 0 \ll \mu < \mu'$ and suppose that $\lambda, \mu, \lambda', \mu'$ are not an eigenvalue of $D_{A_{s''}}$ for all $s'' \in [s, s']$.  It follows from the construction  of $\hat{f}_{s s'}$ that the following diagram is commutative up to canonical homotopy:
\[
   \begin{CD}
        I_{\lambda'}^{\mu'}(W^i_{n+1};A_s, g, \bP) 
           @>{\hat{f}_{s s'}'}>> 
                    I_{\lambda'}^{\mu'}(W^i_{n+1};A_{s'}, g, \bP) \\
          @V  VV              @VV V                          \\
        \Sigma^{V_{\lambda'}^{\lambda}(A_s)} I_{\lambda}^{\mu}(W^i_{n+1};A_s, g, \bP)
           @>>  { (p_{\lambda', s'}^{\lambda})^+ \wedge \hat{f}_{ss'}} > 
             \Sigma^{ V_{\lambda'}^{\lambda}(A_{s'}) } 
                I_{\lambda}^{\mu}(W^i_{n+1};A_{s'}, g, \bP)
   \end{CD}
\]
Here the columns are the homotopy equivalences obtained in the proof of Lemma \ref{lem h.e. lambda mu}, and $p_{\lambda', s'}^{\mu'}$ is an isomorphism from $V_{\lambda'}^{\lambda}(A_s)$ to $V_{\lambda'}^{\lambda}(A_{s'})$ induced by the $L^2$-projection. 
\end{proof}

Suppose that we have $-\lambda, \mu \gg 0$ such that $\lambda, \mu$ are not an eigenvalue of $\tilde{D}_{s''}$ for $s'' \in [s, s']$. Fix $s'' \in [s,s']$. Then we have two isomorphisms
\[
   \begin{split}
     & f_{s s''}:J(W^{i}_{n+1};A_s, g, \bP) \rightarrow  J(W^i_{n+1}; A_{s''}, g, \bP), \\
     & f_{s'' s'}: J(W^i_{n+1}; A_{s'}, g, \bP) \rightarrow  J(W^i_{n+1}; A_{s'}, g, \bP). 
   \end{split}
\]
Composing $f_{s s''}$ and $f_{s'' s'}$, we get an isomorphism
\[
    \begin{split}
     f_{s'' s'} \circ f_{s s''}:  
        J(W^{i}_n; A_{s}, g, \bP)  \rightarrow  J(W^i_n; A_{s'}, g, \bP).  
    \end{split}
\]

\begin{lem} \label{lem f s s'' s'}
In the above situation,  $f_{ss'}$ is canonically homotopic to $f_{s'' s'} \circ f_{s s''}$.
\end{lem}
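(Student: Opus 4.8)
The plan is to realize all three morphisms as Conley-index continuation maps for a single one-parameter family of flows and then to invoke the functoriality of continuation maps; the one extra point is that the linear identifications entering the desuspensions compose only up to homotopy through isomorphisms. First I would fix index pairs $(N_s,L_s)$, $(N_{s''},L_{s''})$, $(N_{s'},L_{s'})$ for $\Inv(W^i_{n+1}\cap V_{\lambda}^{\mu}(s))$, $\Inv(W^i_{n+1}\cap V_{\lambda}^{\mu}(s''))$, $\Inv(W^i_{n+1}\cap V_{\lambda}^{\mu}(s'))$ with each $N$ contained in the corresponding $W^i_{n+1}\cap V_{\lambda}^{\mu}(\cdot)$, and transport the first two to the fixed space $V_{\lambda}^{\mu}(s')$ by the $L^2$-projections $\tilde f_{ss'}$, $\tilde f_{s''s'}$. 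This produces, for every $r\in[s,s']$, a flow $\tilde\gamma_r$ on $V_{\lambda}^{\mu}(s')$ (the image under $p_{\lambda,s'}^{\mu}$ of $\gamma_{\lambda}^{\mu}$ on $V_{\lambda}^{\mu}(r)$) together with the set $\tilde f_{rs'}(W^i_{n+1}\cap V_{\lambda}^{\mu}(r))$; exactly as in the proof of Lemma \ref{lem h.e. lambda mu}, and with the compactness argument of the proof of Lemma \ref{lem f T_0} to keep $T_0$ independent of $\lambda,\mu$, one checks that for $R,-\lambda,\mu$ large and $|s-s'|$ small these are isolating neighborhoods for all $r$ and that the family is a continuation of isolated invariant sets in the sense of \cite{Sala}. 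By construction the maps $\hat f_{s,s';T}$, $\hat f_{s,s'';T}$, $\hat f_{s'',s';T}$ of Lemma \ref{lem f T_0} are, after the identification $\tilde f$, the continuation maps of this family over the subintervals $[s,s']$, $[s,s'']$, $[s'',s']$, defined by the formula (\ref{eq f_T}).

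The heart of the argument is then the functoriality of continuation maps: the continuation map over $[s,s']$ is homotopic to the composite of those over $[s,s'']$ and over $[s'',s']$. Concretely I would check that $f_T$ over $[s'',s']$ composed with $f_T$ over $[s,s'']$ is carried to $f_{T'}$ over $[s,s']$, for a suitable $T'\ge T_0$, by a homotopy that moves the breakpoint from $s''$ to $s$ and rescales the flow times, using properties (3) and (4) of index pairs to see that each intermediate map is well defined and lands in $N_{s'}/L_{s'}$. I expect this to be the main technical obstacle: the composition identity is routine once Lemma \ref{lem f T_0} is available, but one must keep track simultaneously of the three flows $\tilde\gamma_s,\tilde\gamma_{s''},\gamma_{s'}$, the three index pairs, the flow times, and the uniformity in $\lambda,\mu$. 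The essential analytic input -- that the relevant subsets remain isolating neighborhoods throughout -- is already contained in the proof of Lemma \ref{lem f T_0}, via the limiting finite-energy-trajectory argument, so no new estimate is needed.

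It remains to account for the desuspensions. As a morphism of $\fC$ the map $f_{ss'}$ carries the identification $\tilde f_{ss'}$ on the $V_{\lambda}^{0}$-desuspension summand, whereas $f_{s''s'}\circ f_{ss''}$ carries $\tilde f_{s''s'}\circ\tilde f_{ss''}$ there. Since $|s-s'|$ is small, the spaces $V_{\lambda}^{0}(s)$, $V_{\lambda}^{0}(s'')$, $V_{\lambda}^{0}(s')$ are close in $V$ and the projections $p_{\lambda,\cdot}^{0}$ are close to the inclusions, so $\tilde f_{s''s'}\circ\tilde f_{ss''}$ and $\tilde f_{ss'}$ are joined by the straight-line homotopy through linear isomorphisms, hence induce homotopic maps of the corresponding one-point compactifications, and likewise after smashing with the Conley indices. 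Combining this with the continuation-map identity above and taking desuspension yields a homotopy $f_{ss'}\simeq f_{s''s'}\circ f_{ss''}$ in $\fC$; it is canonical because the data entering it -- the index pairs, the parameter $T>T_0$, and the straight-line homotopy of linear isomorphisms -- affect it only up to further canonical homotopy, by Lemmas \ref{lem h.e. lambda mu}, \ref{lem f T_0} and \ref{lem indep lambda mu}.
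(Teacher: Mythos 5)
Your proposal is correct and follows essentially the same route as the paper, whose proof simply asserts that the triangle of $\hat f$-maps commutes up to canonical homotopy; you supply the reasons behind that assertion, namely the composition identity for the continuation-type maps built from formula (\ref{eq f_T}) (a homotopy moving the breakpoint $s''$ and rescaling flow times, with well-definedness coming from the same compactness argument as in Lemma \ref{lem f T_0}), together with the straight-line homotopy through isomorphisms joining $\tilde f_{s''s'}\circ\tilde f_{ss''}$ to $\tilde f_{ss'}$, which reconciles the linear identifications used both on the Conley-index level and on the $V_{\lambda}^{0}$-desuspension summand. The extra care you take with the desuspension identifications is exactly what the paper leaves implicit in the phrase ``commutative up to canonical homotopy.''
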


\begin{proof}
The statement follows from the fact that the following  diagram is commutative up to canonical homotopy:
\begin{gather*}
      \xymatrix{
       I_{\lambda}^{\mu}(W_{n+1}^i;A_s) \ar[rr]^{ \hat{f}_{ss'} } \ar[dr]_{\hat{f}_{ss''}} & 
                       & I_{\lambda}^{\mu}(W_{n+1}^i;A_{s'}) \\
              & I_{\lambda}^{\mu}(W_{n+1}^i;A_{s''}) \ar[ru]_{ \hat{f}_{s'' s'} }  &
      }   
\end{gather*}   
\end{proof}

Let $\Delta = \{ s_0 = -1 < s_1 < s_2 < \cdots < s_{\ell} = 0 \}$ be a partition of the interval $[-1, 0]$ with $|s_j - s_{j+1}| \ll 1$ so that we have $-\lambda_j, \mu_j \gg 0$ which are not an eigenvalue of $\tilde{D}_{s}$ for $s \in [s_j, s_{j+1}]$. 
Suppose that $\lambda_j \geq \lambda_{j+1}$. Then we have
\[
   \begin{split}
    &  \Sigma^{2V_{\lambda_{j+1}}^{\lambda_j}(s_{j})}   
       \Sigma^{ V_{\lambda_j}^0(s_j) } I_{\lambda_j}^{\mu_j}(W^i_{n+1}; A_{s_j})   
       \stackrel{2p_{\lambda_{j+1}, s_{j+1}}^{\lambda_j} \wedge 
         p_{\lambda_j, s_{j+1}}^{0} \wedge \hat{f}_{s_j s_{j+1}}}{\longrightarrow} \\
      & \Sigma^{2 V_{\lambda_{j+1}}^{\lambda_j}(s_{j+1}) }
       \Sigma^{ V_{\lambda_{j}}^{0}(s_{j+1}) } I_{\lambda_j}^{\mu_j}(W^i_{n+1};A_{s_{j+1}}) 
        \stackrel{\cong}{\longrightarrow}    \\
      &  \Sigma^{ V_{\lambda_{j+1}}^{0}(s_{j+1}) } 
         \Sigma^{ V_{\lambda_{j+1}}^{\lambda_j}(s_{j+1}) }
           I_{\lambda_j}^{\mu_j}(W_{n+1}^i;A_{s_{j+1}})         
           \stackrel{\cong}{\longrightarrow}   \\
      &   \Sigma^{ V_{\lambda_{j+1}}^{0} (s_{j+1}) } 
          I_{\lambda_{j+1}}^{\mu_{j+1}}(W_{n+1}^i;A_{s_{j+1}}).
   \end{split}
\]
Similarly, if $\lambda_j < \lambda_{j+1}$, then we have
\[
      \Sigma^{ V_{\lambda_j}^0(s_j) } I_{\lambda_j}^{\mu_j}(W^i_{n+1};A_{s_j}) 
         \stackrel{\cong}{\longrightarrow}
      \Sigma^{ 2V_{\lambda_{j}}^{\lambda_{j+1}} (s_{j})  } 
       \Sigma^{ V_{\lambda_{j+1}}^{0}(s_{j+1}) } 
        I_{\lambda_{j+1}}^{\mu_{j+1}}(W^i_{n+1};A_{s_{j+1}}).
\]
Therefore we have a homotopy equivalence
\begin{equation*} 
      \Sigma^{2 V_+ } \Sigma^{ V_{\lambda_0}^0(-1) } 
         I_{\lambda_{ 0 }}^{\mu_0}(W^i_{n+1};A_{-1})
    \stackrel{\cong}{\rightarrow}
      \Sigma^{2 V_- } \Sigma^{ V_{\lambda_{\ell}}^0 (0) }
      I^{\mu_{\ell}}_{\lambda_{\ell}}(W^i_{n+1};A_{0}), 
\end{equation*}
where
\begin{gather*}
      V_{+} = 
        \bigoplus_{ j  \in J_+ }  V_{\lambda_{j+1}}^{\lambda_j}(s_{j}), \
      V_{-} =
        \bigoplus_{j \in J_- } V_{ \lambda_j }^{\lambda_{j+1}}(s_{j}),    \\
      J_+ = \{ \ j \ | \ 0 \leq j \leq \ell, \lambda_{j} \geq \lambda_{j+1} \ \}, \
     J_- = \{ \ j \ | \ 0 \leq j \leq \ell, \lambda_{j} < \lambda_{j+1} \ \}.
\end{gather*}
We may suppose that 
\[
     \lambda_0 = \lambda_{\ell}, \quad \mu_0 = \mu_{\ell}.
\]
We write $\lambda, \mu$ for $\lambda_0, \mu_0$.
Then we can see that $\dim (V_+)_{\R} = \dim (V_{-})_{\R}, \dim (V_+)_{\C} = \dim (V_-)_{\C}$.
Fix trivializations $\ft_{\pm}$ of $V_{\pm}$:
\[
             \ft_{+}:V_+ \stackrel{\cong}{\rightarrow} \R^{d} \oplus \C^{d'}, \quad
             \ft_{-}:V_- \stackrel{\cong}{\rightarrow} \R^{d} \oplus \C^{d'}.
\]
We get a homotopy equivalence
\begin{equation} \label{eq map -1 0}
     \Sigma^{ 2(\R^{d} \oplus \C^{d'})  } \Sigma^{ V_{\lambda}^0(-1) } 
     I_{\lambda}^{\mu}(W^i_{n+1}A_{-1})
    \stackrel{\cong}{\rightarrow}
      \Sigma^{2(\R^{d} \oplus \C^{d'}) } \Sigma^{ V_{\lambda}^0 (0) }
      I^{\mu}_{\lambda}(W^i_{n+1};A_{0}).
\end{equation}
Taking a desuspension of this map  we get an isomorphism
\[ 
      J(W^i_{n+1};A_{-1}, g, \bP) \stackrel{\cong}{\rightarrow} J(W^i_{n+1};A_0, g, \bP)
\]
in $\fC$.
Composing this with $h_1:J(W^i_n;A_0, g, \bP) \rightarrow J(W^i_{n+1};A_{-1}, g, \bP)$, we obtain
\[
    \ff:J(W^i_n;A_0, g, \bP) \stackrel{\cong}{\rightarrow} J(W^i_{n+1};A_0, g, \bP).
\]


\subsection{Definition of $\SWF(Y, \fc, g, \bP)$: The case $b_1(Y) = 1$ }
Fix $T > T_0$,  $\Delta = \{ s_0 = -1 < s_1 < \cdots < s_{\ell} = 0 \}$, $-\lambda_j, \mu_j \gg 0$, and trivializations $\ft_{\pm}$ to get $\ff$.
As in Section \ref{subsec attractor}, we have a morphism defined by using the flow:
\[
   \begin{split}
      J(W^i_n) \rightarrow 
         \Sigma J(W^{i+1}_{n-1} \cup W^{i+1}_{n}) 
       = \Sigma (J(W^{i+1}_{n-1}) \vee J( W_n^{i+1})).
   \end{split}
\]
Composing this morphism with 
\[
      J(W^{i+1}_{n-1}) \vee J( W_n^{i+1}) 
         \stackrel{\ff \vee \id }{\rightarrow}
      J(W^{i+1}_n)
\]
we get a morphism
\[
     k = k_n^i:J(W^i_n) \rightarrow \Sigma J(W_n^{i+1}).
\]
Note that $k = k_1 + k_2$ in $\fC$, where
\[
  \begin{split}
   & k_1:J(W_n^i) \rightarrow 
         \Sigma J(W^{i+1}_{n-1}) \stackrel{\ff}{\rightarrow}
         \Sigma J(W^{i+1}_n),   \\
   & k_2:J(W_n^i) \rightarrow \Sigma J(W^{i+1}_n).
  \end{split}
\]             
                               
First we define $\SWF(Y, \fc, g, \bP)$ in the case where $N = 2$, where $N$ is the number of $V_i$'s which intersect with $U_n$ as in Section \ref{ss transverse}.  As we have explained, we have the morphism
\[
     k:J(W^1_n) \rightarrow 
       \Sigma ( J(W^2_{n-1}) \vee J(W^2_n) )  \rightarrow
       \Sigma J(W^2_n).
\]
We define 
\[
      \SWF(Y,\fc, g, \bP) = \Sigma^{-1} C(k) \in \Ob(\fC).
\]
More precisely we define $\SWF(Y, \fc, g, \bP)$ using a continuous map $\hat{k}$ which represents $k$ as follows. Fix $T > T_0$, $\Delta, \lambda_0,\dots, \lambda_{\ell}, \mu_0, \dots, \mu_{\ell}$ with $\lambda_0 = \lambda_{\ell}, \mu_0 = \mu_{\ell}$ and $\ft_{\pm}$, then we get a continuous map
\[
      \hat{k}:\Sigma^{2(\R^{d} \oplus \C^{d'})} \Sigma^{V_{\lambda}^0} 
              I_{\lambda}^{\mu}(W^1_n)
              \rightarrow
              \Sigma^{2(\R^{d} \oplus \C^{d'})} \Sigma^{V_{\lambda}^0} 
              \Sigma I_{\lambda}^{\mu}(W^2_n)
\]
which represents the morphism $k$.

\begin{dfn}
We define
\[
    \begin{split}
   \SWF(Y,\fc, g, \bP) 
      &= \SWF(Y,\fc, g, \bP;  
         A_0,n, \Delta, \{ \lambda_j, \mu_j \}_{j}, \ft_{\pm}, f_1, f_2)       \\
      &:= 
   ( C( \hat{k} ), 
    2d + 2\dim (V_{\lambda}^0)_{\R} + 1 ,  
     2d' + 2 \dim (V_{\lambda}^0)_{\C} ) 
      \\
      & \in \Ob( \fC ).
    \end{split}
\] 
\end{dfn}

Next we consider the case $N=3$.
As before we have the morphism
\[
     k^1:J(W^1_n) \rightarrow \Sigma J(W^2_n).
\]
We will define a morphism
\[
    K: \Sigma^{-1} C(k^1) \rightarrow \Sigma J(W^3_{n})
\]
as follows. 
Take $y \in J(W^1_n)$. We can write $k^1(y) = (1-s(y), y') \in \Sigma J(W^3_n)$ with some $y' \in J(W^3_n)$. 
We can also write $( \Sigma k^2 )(1-s(y), y') = (1-s(y), 1-s'(y'), y'')$ with some $y'' \in J(W^3_n)$, where
\[
     k^2:J(W_n^2) \rightarrow \Sigma ( J(W_{n-1}^3) \vee J(W_n^3) )
                  \rightarrow \Sigma J(W_n^3).
\]
We define a morphism by
\[
     \begin{array}{rcl}
        C(J(W^1_n)) &\longrightarrow& \Sigma^2 J(W^3_{n}) \\
           (t, y)    &\longmapsto& (1 - (1-t) s(y), 1-s'(y'), y'').
     \end{array}
\]
We can see that this is well defined. When $t = 0$, this morphism coincides with $\Sigma k^2 \circ k^1$.  Hence the above morphism and $k^2$ induce a morphism
\[
            C(k^1) \longrightarrow \Sigma^2 J(W^3_{n}).
\]
Taking desuspension, we obtain 
\[
           K:\Sigma^{-1} C(k^1) \rightarrow \Sigma J(W^3_{n}).
\]

\begin{dfn}
We define
\[
   \begin{split}
    \SWF(Y, \fc, g, \bP) 
       =& \SWF(Y, \fc, g, \bP; 
              A_0, n, \Delta, \{ \lambda_j, \mu_j \}_j, \ft_{\pm},  f_1, f_2) \\
      :=& \Sigma^{-1} C(K) \in \Ob(\fC).
   \end{split}
\]
\end{dfn}

More precisely, we use a continuous map which represents $K$ to define $\SWF(Y,\fc, g, \bP)$ as in the previous case.
For any $N \geq 4$, we can define $\SWF(Y,\fc, g, \bP)$ in a similar way.
For $H \subset H^1(Y;\Z)$ with $H \not= \{ 0 \}$, we  also define a variant $\SWF(Y, \fc, H, g, \bP)$ as follows:

\begin{dfn}
Let $H \subset H^1(Y;\Z)$ be a subspace with $H \not= \{ 0 \}$. We can take $m h_1$ as a generator of $H$ for some $m \in \Z_{ > 0}$. We denote by $\SWF(Y, \fc, H, g, \bP)$ the object of $\fC$ obtained by replacing $h_1$ with $m h_1$ in the construction of $\SWF(Y, \fc, g, \bP)$.
\end{dfn}

We will prove the following in Section \ref{subsec proof of prop indep}:

\begin{prop} \label{prop indep 1}
The object $\SWF( Y, \fc, H, g,\bP; A_0, n, \Delta, \{ \lambda_j, \mu_j \}_j, \ft_{\pm},  f_1, f_2)$ of $\fC$ is independent of the choices of $A_0$, $n$, $\Delta$, $\{ \lambda_j, \mu_j \}_j$, $\ft_{\pm}$ and $(f_1, f_2)$ up to canonical isomorphism in $\fC$.
\end{prop}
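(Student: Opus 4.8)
The plan is to verify independence from each parameter in turn, reducing everything to the uniqueness properties of Conley indices and index pairs established in Section 2, together with the spectral-flow and continuation arguments of Section 3. The crucial observation is that $\SWF(Y,\fc,H,g,\bP)$ is built out of the pieces $J(W^i_n)$ and the morphisms $k^i$ (or $K$, or their higher analogues) via iterated mapping cones, so it suffices to show that each $J(W^i_n)$ and each connecting morphism is canonically independent of the choices, and that the mapping-cone construction is functorial with respect to these canonical isomorphisms up to canonical homotopy.

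First I would handle the ``soft'' parameters. Independence of $\ft_{\pm}$ follows because any two trivializations of $V_{\pm}$ differ by an element of $GL(d,\R) \times GL(d',\C)$, and $GL_{\R}$, $GL_{\C}$ are connected (up to the two components of $GL(d,\R)$, which one handles by the standard observation that $\Sigma^2$ of an orientation-reversing map is homotopic to the identity after a further suspension, exactly as in \cite{Mano b1=0}); a path of trivializations induces a homotopy of the representing maps $\hat{k}$. Independence of $\{\lambda_j,\mu_j\}_j$ is Lemma \ref{lem h.e. lambda mu} together with Lemma \ref{lem indep lambda mu}: enlarging the spectral window only suspends by $V_{\lambda'}^{\lambda}$, which is cancelled by the built-in desuspension, and Lemma \ref{lem indep lambda mu} shows the connecting morphisms $\ff$ and $k^i$ transform compatibly. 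Independence of $n$ is the content of Section \ref{subsec Iso f}: the isomorphism $\ff$ of \eqref{eq map -1 0} identifies $J(W^i_n)$ with $J(W^i_{n+1})$ and, by Lemma \ref{lem f s s'' s'}, it is compatible with refinements of the partition and with composition, so the whole diagram of $J$'s and $k$'s shifts by $\ff$ and the mapping cones agree. Independence of $\Delta$ is likewise reduced to Lemma \ref{lem f s s'' s'}: any two partitions have a common refinement, and refining $\Delta$ changes $\ff$ only by the canonically homotopic composite $f_{s''s'}\circ f_{ss''}$.

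Next I would treat independence from the flat connection $A_0$ and from the transverse double system $(f_1,f_2)$, which are the genuinely analytic points. For $A_0$: any two flat connections on $\det\fc$ differ by a closed $1$-form, and since $b_1(Y)=1$ the relevant space of flat connections is connected, so one connects $A_0$ to $A_0'$ by a path; the corresponding family of linearizations $l^{\bP}$ has no spectral flow for suitable windows, and the continuation argument of Lemma \ref{lem f T_0} (applied now to a path of connections rather than the path $A_s$) produces the desired canonical homotopy equivalence of Conley indices. For $(f_1,f_2)$: given two transverse double systems, one connects them through a path $(f_1^t,f_2^t)$, arranging transversality to hold generically along the path; the sets $W^i_n$ vary, but the maximal invariant sets and their isolating neighborhoods deform continuously, and the compactness/energy estimate used in the proof of Lemma \ref{lem f T_0} (trajectories in $W^i_n$ have uniformly bounded energy, hence subconverge to finite-energy Seiberg--Witten trajectories on $Y\times\R$, cf.\ \cite[Section 5]{KM Three manifold}) rules out escape of the invariant set across the family. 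This yields canonical homotopy equivalences of the $I^{\mu}_{\lambda}(W^i_n)$ compatible with the connecting morphisms, hence a canonical isomorphism of the mapping cones.

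\textbf{Main obstacle.} I expect the hard part to be the independence from $(f_1,f_2)$: unlike the other parameters, varying the transverse double system genuinely moves the subsets $W^i_n = U_n \cap V_{n+i}$ that index the whole construction, so one cannot simply ``transport along an isomorphism of fixed pieces.'' One must show that along a generic path the combinatorial structure (the number $N$ of intersecting $V_j$'s, and which ones they are) can be kept constant away from finitely many wall-crossings, and that at a wall-crossing the two descriptions of $\SWF$ — with $N$ and with $N\pm 1$, or with a reshuffling of the $W^i_n$ — give canonically isomorphic objects. This is essentially an attractor-repeller / filtration-change argument: inserting or deleting a piece corresponds to an exact triangle in which the new term is contractible, so the mapping cone is unchanged, but making this precise requires care with the continuity of Conley indices across the wall and with the compatibility of the connecting maps $k^i$ and $K$ with the change of filtration. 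Everything else reduces to the already-established Lemmas \ref{lem h.e. lambda mu}, \ref{lem f T_0}, \ref{lem indep lambda mu}, \ref{lem f s s'' s'} and the standard uniqueness of the Conley index.
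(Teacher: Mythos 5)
Your overall strategy (reduce to canonical independence of the pieces $J(W^i_n)$ and of the connecting morphisms, then transport through the mapping cone) is the right skeleton, and your treatment of $\{\lambda_j,\mu_j\}_j$, $\Delta$, $n$, and $A_0$ matches the paper's (Lemma \ref{lem h.e. lambda mu}, Lemma \ref{lem indep lambda mu}, Lemma \ref{lem f s s'' s'}, the $f_{ss'}$ continuation). But two of your steps diverge from the paper in ways that matter.

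\textbf{Independence from $\ft_{\pm}$.} You propose to connect $\ft_\pm$ to $\ft_\pm'$ by a path of trivializations and read off a homotopy of representing maps $\hat{k}$. This is exactly the route the paper's remark in Section \ref{subsec ind ft} warns against: a homotopy from $2\ft_\pm$ to $2\ft_\pm'$ is \emph{not unique up to homotopy} because $\pi_1(U(d'))=\Z$, so the induced isomorphism of $C(\hat{k})$ with $C(\hat{k}')$ is not canonical. The paper instead observes that $(2\ft_+')\circ(2\ft_+^{-1})$ and $(2\ft_-')\circ(2\ft_-^{-1})$ fit into a diagram that is \emph{strictly} commutative (no homotopy needed), hence yield a genuine homeomorphism $C(\hat{k})\cong C(\hat{k}')$ with no auxiliary choices. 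Your appeal to ``$\Sigma^2$ of an orientation-reversing map'' handles the $\pi_0(O(d))$ issue but does not address the $\pi_1(U)$ issue, which is the obstruction to canonicity.

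\textbf{Independence from $(f_1,f_2)$.} You correctly identify this as the hard point, and you propose a generic-path/wall-crossing argument with the claim that ``inserting or deleting a piece corresponds to an exact triangle in which the new term is contractible.'' The paper's Section \ref{subsec ind trans} does something different and, I think, necessarily so: for two fixed transverse double systems with $f_2=\tilde f_2$, the total region is repartitioned, e.g.\ $W^1_n=\widetilde W^1_n\cup Z^1_n$, $W^2_n=\widetilde W^3_n\cup Z^2_n$, $\widetilde W^2_n=Z^1_n\cup Z^2_n$, and the pieces $J(Z^j_n)$ are \emph{not} claimed to be contractible. The argument is then a nine-diagram / octahedral-axiom comparison of iterated cones, as in (\ref{eq exact C(K)}), (\ref{eq exact W^2_n}), (\ref{eq exact C(K^1)}): both $C(k)$ and $C(\tilde K)$ arise as cones on compositions of maps through the same intermediate $J(Z^2_n)$, and the canonical isomorphism comes from the compatibility of the resulting exact triangles, not from any contractibility. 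Your wall-crossing picture would require justifying that the piece appearing at a wall has trivial Conley index, which need not hold (critical points can lie in the new slab). So while a continuation argument is not unreasonable in spirit, the combinatorial exact-triangle argument is what actually closes the case, and your proposal has a genuine gap here.

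Finally, note that for $b_1(Y)\geq 2$ the proposition (Proposition \ref{prop indep 2}) also requires independence from the trivializations $\ft$ of vector bundles over cubes and $\tilde{\tilde{\ft}}$ of $\tilde{\tilde V}$; the paper treats these by the same strict-commutativity argument as $\ft_\pm$, and the same caution about path arguments applies.
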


\subsection{Commutativity of $\ff_i$ and $\ff_j$} \label{subsec fi fj}

We have defined the Seiberg-Witten-Floer stable homotopy type for a 3-manifolds with $b_1(Y) = 1$. Next we will extend the definition to the case $b_1(Y) \geq 2$.
Fix a Riemannian metric $g$ and a spin-c structure $\fc$ on $Y$ with $c_1(\fc)$ torsion. Suppose that $q_Y = 0$. Then we can take a spectral section $\bP$ of the family $\bD_{\fc}$ of Dirac operators  on $Y$ parametrized $\Pic(Y)$ as before.

Let $\{ h_1, \dots, h_{b} \}$ be a set of generators of $H^1(Y;\Z)$, where $b = b_1(Y)$. Take a transverse double system $(f_1^{j}, f_2^{j})$ with respect to $h_j$ for each $j$. As in the previous case, we can define an object $J(W^{i_1, \dots, i_b}_{n_1, \dots, n_b}) = J(W^{i_1, \dots, i_b}_{n_1, \dots, n_b};A_0, g, \bP)$ of $\fC$. Here $A_0$ is a fixed flat connection on $\det \fc$. We can also define an isomorphism
\[
        \ff_j:J(W^{i_1, \dots, i_b}_{n_1, \dots, n_j, \dots,  n_b}; A_0, g, \bP) \rightarrow
              J(W^{i_1, \dots, i_b}_{n_1, \dots, n_j+1, \dots,  n_b}; A_0, g, \bP)
\]
as in Section \ref{subsec Iso f}.
Before we begin the construction of $\SWF(Y, \fc, g, \bP)$, we discuss commutativity of $\ff_i$ and $\ff_j$. 
To simplify notation, we suppose $b_1(Y) = 2$ and consider $\ff_1$ and $\ff_2$. The morphism $\ff_1$ is represented by a continuous map $\hat{f}_1 \circ h_1$, and similarly $\ff_2$ is represented by $\hat{f}_2 \circ h_2$.  Here $\hat{f}_j$ is a continuous map constructed as in Section \ref{subsec Iso f}.
We will construct a homotopy from  $\Sigma^{2\tilde{V}_-} (\hat{f}_2 \circ h_2 \circ \hat{f}_1 \circ h_1)$ to $\Sigma^{2\tilde{V}_-'} ( \hat{f}_1 \circ h_1 \circ \hat{f}_2 \circ h_2 )$, where $\tilde{V}_-, \tilde{V}_-'$ are suitable finite dimensional vector spaces which are sums of real and complex vector spaces. In particular $\ff_2 \circ \ff_1$ is equal to $\ff_1 \circ \ff_2$ in $\fC$.

Let $h_1, h_2$ be generators of $H^1(Y;\Z)$. For $s_1, s_2 \in [-1, 0]$, put
\[
     A_{s_1, s_2} = A_0 + 2\sqrt{-1} s_1 h_1 + 2\sqrt{-1} s_2 h_2.
\]
Take $-1 = s_1(0) < s_1(1) < \dots < s_1(\ell_1) = 0$, $-1 = s_2(0) < s_2(1) < \cdots < s_2(\ell_2) = 0$ with $| s_1(i) - s_1(i+1) |, | s_2(j) - s_2(j+1) | \ll 1$ such that there are $-\lambda(i,j), \mu(i,j) \gg 0$ which are not an eigenvalue of $\tilde{D}_{s_1, s_2}$ for $(s_1, s_2) \in [s_1(i), s_{1}(i+1)] \times [s_2(j), s_2(j+1)]$. Here $\tilde{D}_{s_1, s_2} = D_{A_{s_1, s_2}} + B^{\bP}_{s_1, s_2}$. We may suppose that
\begin{equation} \label{eq lambda mu ell}
\begin{split}
    &  \lambda(0, j) = \lambda(\ell_1, j),  \ \lambda(i, \ell_2) = \lambda(i, \ell_2),  \\
    &  \mu(0,j) = \mu(\ell_1, j), \  \mu(i, \ell_2) = \mu(i, \ell_2)
\end{split}
\end{equation}
for each $i, j$. We write $\lambda, \mu$ for $\lambda(0,0), \mu(0,0)$ respectively. 

By definition, $\ff_1$ is the composition of $h_1:J(W^{i_1,i_2}_{n_1,n_2};A_{0,0},\bP) \rightarrow J(W^{i_1,i_2}_{n_1+1, n_2};A_{-1, 0}, \bP)$ and $f_1:J(W^{i_1,i_2}_{n_1+1,n_2};A_{-1,0},\bP) \rightarrow J(W^{i_1,i_2}_{n_1+1,n_2};A_{0,0}, \bP)$, and $f_1$ is represented by a continuous map
\[ 
    \begin{split}
       \hat{f}_1:
              &  \Sigma^{ 2V_{1, +} } \Sigma^{ V_{\lambda}^0 (A_{-1,0},  g,  \bP)}
                 I_{\lambda}^{\mu}(W^{i_1,i_2}_{n_1+1,n_2};A_{-1, 0})
                 \rightarrow      \\
             &   \Sigma^{ 2V_{1,-} } \Sigma^{ V_{\lambda}^{0}(A_{0,0}, g, \bP) }
                 I_{\lambda}^{\mu}(W^{i_1,i_2}_{n_1+1, n_2};A_{0,0}).
    \end{split}
\]
Under the assumption (\ref{eq lambda mu ell}), we can see that $\dim (V_{1,+})_{\R} = \dim (V_{1,-})_{\R}$, $\dim (V_{1,+})_{\C} = \dim (V_{1,-})_{\C}$.
Similarly, $\ff_2$ is the composition of $h_2:J(W^{i_1,i_2}_{n_1,n_2};A_{0,0},\bP) \rightarrow J(W^{i_1,i_2}_{n_1,n_2+1};A_{0,-1},\bP)$ and $f_2:J(W^{i_1,i_2}_{n_1,n_2+1};A_{0,-1},\bP) \rightarrow J(W^{i_1,i_2}_{n_1,n_2+1};A_{0,0}, \bP)$, and $f_2$ is represented by a continuous map
\[
   \begin{split}
     \hat{f}_2:
             &  \Sigma^{ 2V_{2,+} } \Sigma^{ V_{\lambda}^0 (A_{0, -1}, g, \bP)}
                I_{\lambda}^{\mu}(W^{i_1,i_2}_{n_1, n_2+1};A_{0, -1})
                \rightarrow   \\
             &  \Sigma^{ 2V_{2,-} } \Sigma^{ V_{\lambda}^{0}(A_{0,0}, g, \bP) }
                I_{\lambda}^{\mu}(W^{i_1,i_2}_{n_1, n_2+1};A_{0,0}).
    \end{split}
\]
As before, we have $\dim (V_{2,+})_{\R} = \dim (V_{2,-})_{\R}$, $\dim (V_{2,+})_{\C} = \dim (V_{2,-})_{\C}$.
The morphism $\ff_2 \circ \ff_1$ is represented by the following continuous map
\[
     \hat{f}_2 \circ h_2 \circ \hat{f}_1 \circ h_1 =
     \hat{f}_2 \circ \hat{f}_1' \circ h_2 \circ h_1.
\]
Here
\[
   \begin{split}
    \hat{f}_1'= h_2 \circ \hat{f}_1 \circ h_2^{-1}
    \end{split} 
\]
Similarly, $\ff_1 \circ \ff_2$ is represented by a continuous map
\[
      \hat{f}_1 \circ h_1 \circ \hat{f}_2 \circ h_2 =
      \hat{f}_1 \circ \hat{f}_2' \circ h_1 \circ h_2.
\]
Here
\[
      \begin{split}
    \hat{f}_2'= h_1 \circ \hat{f}_2 \circ h_1^{-1}
       \end{split}
\]
We have
\begin{equation} \label{eq map f_2 f_1}
   \begin{split}
     & \hat{f}_2 \circ \hat{f}_1':  \\
            & \Sigma^{2V_{2, +} \oplus 2 (h_2 V_{1,+}) } 
              \Sigma^{ V_{\lambda}^0 (A_{-1,-1}, g, \bP)}
                 I_{\lambda}^{\mu}(W^{i_1,i_2}_{n_1+1,n_2+1};A_{-1, -1})
                 \rightarrow      \\
             &   \Sigma^{ 2V_{2,+} \oplus 2 (h_2 V_{1,-}) } 
                 \Sigma^{ V_{\lambda}^{0}(A_{0,-1}, g, \bP) }
                 I_{\lambda}^{\mu}(W^{i_1,i_2}_{n_1+1,n_2+1};A_{0,-1})
                 \rightarrow     \\
            &   \Sigma^{ 2V_{2,-} \oplus 2 (h_2 V_{1,-})} 
                \Sigma^{ V_{\lambda}^{0}(A_{0,0}, g, \bP) }
                I_{\lambda}^{\mu}(W^{i_1,i_2}_{n_1+1,n_2+1};A_{0,0})
    \end{split}
\end{equation}
and
\begin{equation} \label{eq map f_1 f_2}
     \begin{split}
     & \hat{f}_1  \circ \hat{f}_2':  \\
            & \Sigma^{2V_{1, +} \oplus 2 (h_1 V_{2,+}) } 
              \Sigma^{ V_{\lambda}^0 (A_{-1,-1}, g, \bP)}
                 I_{\lambda}^{\mu}(W^{i_1,i_2}_{n_1+1,n_2+1};A_{-1, -1})
                 \rightarrow      \\
             &   \Sigma^{ 2V_{1,+} \oplus 2 (h_1 V_{2,-}) } 
                 \Sigma^{ V_{\lambda}^{0}(A_{-1,0}, g, \bP) }
                 I_{\lambda}^{\mu}(W^{i_1,i_2}_{n_1+1, n_2+1};A_{-1,0})
                 \rightarrow     \\
            &   \Sigma^{ 2V_{1,-} \oplus 2 (h_1 V_{2,-})} 
                \Sigma^{ V_{\lambda}^{0}(A_{0,0}, g, \bP) }
                I_{\lambda}^{\mu}(W^{i_1,i_2}_{n_1+1, n_2+1};A_{0,0}).
    \end{split}
\end{equation}
For each $(i,j)$, put
\[
   \begin{split}
      V_{1, + }(i,j) 
      &=  \left\{
        \begin{array}{ll}
           V_{\lambda(i+1,j)}^{ \lambda(i,j) }( A_{s_1(i), s_2(j)} ) &
             \text{if $\lambda(i, j) > \lambda(i+1,j)$,  }  \\
          0 & \text{otherwise,}
        \end{array}
       \right.   \\
      V_{1,-}(i,j)
      &= \left\{
          \begin{array}{ll}
           V_{ \lambda(i, j)  }^{ \lambda(i+1,j)  }(A_{s_1(i), s_2(j)}) &
           \text{if $\lambda(i,j) < \lambda(i+1,j)$} \\
          0 & \text{otherwise,}
          \end{array}
         \right. \\
      V_{2,+}(i,j)
      &= \left\{
          \begin{array}{ll}
           V_{\lambda(i,j+1)}^{\lambda(i,j)}(A_{s_1(i),s_2(j)  }) &
           \text{if $\lambda(i,j) > \lambda(i,j+1)$,}   \\
           0 & \text{otherwise,}   
          \end{array}
         \right.    \\
      V_{2,-}(i,j)
      &= \left\{
          \begin{array}{ll}
           V_{\lambda(i,j)}^{\lambda(i,j+1)}(A_{s_1(i), s_2(j)}) &
               \text{if $\lambda(i,j) < \lambda(i,j+1)$,}  \\
           0 & \text{otherwise.}
          \end{array}
         \right.   \\
   \end{split}
\]
We introduce the following sets of $(i,j)$:
\[
   \begin{split}
     \tilde{J}  
    &=   \{ \ (i,j) \ | \  0 \leq  i < \ell_1, \ 0 < j \leq \ell_2, \   \},  \\
     \tilde{J}' 
     &  =\{ \ (i,j) \ | \ 0 < i \leq \ell_1, \ 0 \leq j < \ell_2, \ \},   \\
     \tilde{\tilde{J}}
    &  = \{ \ (i,j) \ | \  0 \leq  i \leq \ell_1, \ 0 \leq j \leq \ell_2, \ \}.  \\
   \end{split}
\]
Lastly, we define finite dimensional vector spaces:
\[
   \begin{split}
  \tilde{V}_{-} 
       & = \bigoplus_{ (i,j) \in \tilde{J} } V_{1,-}(i,j) \oplus V_{2,-}(i,j),  \\
  \tilde{V}_{-}' 
         & = \bigoplus_{ (i,j) \in \tilde{J}' } V_{1,-}(i,j) \oplus V_{2,-}(i,j),  \\
  \tilde{\tilde{V}}_- 
      & = \bigoplus_{ (i,j) \in \tilde{\tilde{J}} } V_{1,-}(i,j) \oplus V_{2,-}(i,j).
   \end{split}
\]
Taking the suspension of (\ref{eq map f_2 f_1}) by $2 \tilde{V}_-$, we get
\begin{equation} \label{eq tilde V f2 f1}
   \begin{split}
     & \Sigma^{2 \tilde{V}_- } \hat{f}_2  \circ \hat{f}_1': \\
     &  \Sigma^{2 \tilde{V}_- \oplus  2V_{2, +} \oplus 2 (h_2 V_{1,+}) } 
              \Sigma^{ V_{\lambda}^0 (A_{-1,-1}, g, \bP)}
                 I_{\lambda}^{\mu}(W^{i_1,i_2}_{n_1+1, n_2+1};A_{-1, -1})
        \rightarrow    \\
     &  \Sigma^{2\tilde{V}_- \oplus 2V_{2,-} \oplus 2 (h_2 V_{1,-})} 
        \Sigma^{ V_{\lambda}^{0}(A_{0,0}, g, \bP) }
        I_{\lambda}^{\mu}(W^{i_1,i_2}_{n_1+1,n_2+1};A_{0,0}) = \\
     & \Sigma^{ 2 \tilde{\tilde{V}}_- }
        \Sigma^{ V_{\lambda}^{0}(A_{0,0}, g, \bP) }
        I_{\lambda}^{\mu}(W^{i_1,i_2}_{n_1+1, n_2+1};A_{0,0}).
   \end{split}
\end{equation}
Here we have used the fact that
\[
        2\tilde{V}_- \oplus 2V_{2,-} \oplus 2 (h_2 V_{1,-}) = 2\tilde{\tilde{V}}_-.
\]
Similarly taking the suspension of (\ref{eq map f_1 f_2}) by $\tilde{V}_{-}'$, we get
\begin{equation} \label{eq tilde V f1 f2}
       \begin{split}
            &  \Sigma^{ 2\tilde{V}_-' }   
            \hat{f}_1  \circ \hat{f}_2':  \\
            & \Sigma^{2\tilde{V}_-' \oplus  2V_{1, +} \oplus 2 (h_1 V_{2,+}) } 
              \Sigma^{ V_{\lambda}^0 (A_{-1,-1}, g, \bP)}
                 I_{\lambda}^{\mu}(W^{i_1,i_2}_{n_1+1,n_2+1};A_{-1, -1})
                 \rightarrow      \\
            &   \Sigma^{ 2  \tilde{ \tilde{V} }_-  } 
                \Sigma^{ V_{\lambda}^{0}(A_{0,0}, g, \bP) }
                I_{\lambda}^{\mu}(W^{i_1,i_2}_{n_1+1,n_2+1};A_{0,0}).
    \end{split}
\end{equation}

We will see that we can continuously deform (\ref{eq tilde V f2 f1}) to (\ref{eq tilde V f1 f2}).  Let $\gamma_0, \gamma_1$ be paths in $\cH_g^1(Y)$ from $-h_1-h_2$ to $0$ defined by
\[
  \begin{split}
   &  \gamma_0(t) = 
        \left\{
          \begin{array}{ll}
            (2t-1) h_1 - h_2 & \text{if $0 \leq t \leq \frac{1}{2}$,} \\
            (2t-2) h_2       & \text{if $\frac{1}{2} \leq t \leq 1$,  }
          \end{array}
        \right.                 \\
   & \gamma_1(t) =
        \left\{
           \begin{array}{ll}
             -h_1  + (2t-1) h_2 & \text{if $0 \leq t \leq \frac{1}{2}$,  }   \\
             (2t-2) h_1 & \text{if $\frac{1}{2} \leq t \leq 1$.}
           \end{array}
        \right.
  \end{split}
\]
Let $\Gamma:[0,1]^2 \rightarrow \cH^1_g(Y)$ be a homotopy from $\gamma_0$ to $\gamma_1$ defined by
\[
     \Gamma(u, t) = (1-u) \gamma_0(t) + t_1\gamma_1(t).
\]
There is $u_{1} \in (0, 1)$ such that for $u \in (0, u_1)$ the curve $\Gamma(u,\cdot)$ is not through $s_1(i)h_1 + s_2(1) h_2$ for $i \in \{ 1, \cdots, \ell_1-1 \}$ or $s_1(\ell_1-1) h_1 +  s_2(j) h_2$ for $j \in \{ 1, \cdots, \ell_2-1 \}$, and the curve $\Gamma(u_1,\cdot)$ is through the point $s_{1}(\ell_1-1) h_1 + s_2(1)h_2$. For each $u \in [0, u_{1})$, we can define a continuous map
\[
    \begin{split}
      \hat{h}_{u}:  
     & \Sigma^{2 (\tilde{V}_-(u) \oplus V_+(u))  }
        \Sigma^{ V_{\lambda}^0 (A_{-1,-1},\bP)}
                 I_{\lambda}^{\mu}(W^{i_1,i_2}_{n_1+1,n_2+1};A_{-1, -1}) \rightarrow    \\
     &   \Sigma^{  2\tilde{ \tilde{V} }_-  } 
                \Sigma^{ V_{\lambda}^{0}(A_{0,0}, \bP) }
                I_{\lambda}^{\mu}(W^{i_1,i_2}_{n_1+1, n_2+1};A_{0,0})
    \end{split}
\]
as before. This is continuous in $u$ since $\lambda(i,j)$ and $0$ are not an eigenvalue of $\tilde{D}_{s_1, s_2}$ for $(s_1,s_2) \in [s_1(i), s_1(i+1)] \times [s_2(j), s_2(j+1)]$.  Using the fact that there is a canonical isomorphism
\[
   \begin{split}
    & V_{1,+}(\ell_1-1,0) \oplus V_{2,+}(\ell_1, 1) \oplus
      V_{1,-}(\ell_1-1,1) \oplus V_{2,-}(\ell_1-1, 1)  
      \cong   \\
    & V_{2,+}(\ell_1-1,1) \oplus V_{1,+}(\ell_1-1,1) \oplus
      V_{1,-}(\ell_1-1,0) \oplus V_{2,-}(\ell_1, 1),
    \end{split}
\]
we can continuously extend $\hat{h}_{u}$ to $u \in [ u_{1}, u_{2})$, where $u_{2}$ is the next value such that $\Gamma(u_2,\cdot)$ is through $s_1(i)h_1 + s_2(j)h_2$ for some $i \in { 1, \dots, \ell_1-1}$, $j \in \{ 1, \dots, \ell_2-1 \}$. Repeating this discussion, we can define $\hat{h}_{t_1}$ for $t \in [0,1]$. The family $\{ \tilde{V}_-(u) \oplus V_+(u) \}_{u \in [0,1]}$ defines a vector bundle on $[0,1]$. Fix trivializations $\ft$ and $\tilde{\tilde{\ft}}$ of this bundle and $\tilde{ \tilde{V}}_-$. Then we get a homotopy from $\Sigma^{ 2 \tilde{V}_- }(\hat{f}_2 \circ h_2  \circ \hat{f_1} \circ h_1)$ to $\Sigma^{2 \tilde{V}'_-}(\hat{f}_1 \circ h_1 \circ \hat{f}_2 \circ h_2)$.

\subsection{Definition of $\SWF(Y, \fc, g, \bP)$: The case $b_1(Y) \geq 2$}

In this subsection, we will give the definition of $\SWF(Y, \fc, g, \bP)$ in the case where $b_1(Y) \geq 2$ and $c_1(\fc)$ is torsion, following \cite[Section 5]{KM ver 1}.  For simplicity, suppose that $b_1(Y) = 2$. In this case, $q_Y=0$ and we can find a spectral section $\bP$ for $\bD_{\fc}$.  Take a set $\{ h_1, h_2 \}$ of generators of $H^1(Y;\Z)$ and transverse double systems $(f_1, f_2)$ and $(f_1', f_2')$, where $(f_1, f_2)$ has the properties in (\ref{dfn transverse}) with respect to the action of $h_1$ and $(f_1', f_2')$ has the properties in (\ref{dfn transverse}) with respect to the action of $h_2$. Suppose that $N = N' = 2$ for simplicity, where $N, N'$ are the numbers of $V_{i}$, $V_{i}'$ which intersect with $U_n, U_n'$ as in Section \ref{ss transverse}. As before we have an object
$J(W^{i_1,i_2}_{n_1, n_2}) = J(W^{i_1, i_2}_{n_1, n_2};A_0,g, \bP)$ of $\fC$ for $i_1, i_2 \in \{ 1, 2 \}, n_1, n_2 \in \Z$. We also have the morphisms defined by using the flow and $\ff_j$:
\[
   \begin{split}
     & k^{i_2}:J(W^{1,i_2}_{n_1, n_2}) 
        \rightarrow 
             \Sigma \big( J(W^{2, i_2}_{n_1-1, n_2}) \vee 
                J(W^{2, i_2}_{n_1, n_2}) \big)
       \stackrel{ \ff_1 \vee \id  }{ \rightarrow }
            \Sigma J(W^{2, i_2}_{n_1, n_2}),  \\
    & l^{i_1}:J(W^{i_1,1}_{n_1, n_2}) 
        \rightarrow 
             \Sigma \big( J(W^{i_1, 2}_{n_1, n_2-1}) \vee 
                J(W^{i_1, 2}_{n_1, n_2}) \big)
       \stackrel{ \ff_2 \vee \id }{ \rightarrow }
            \Sigma J(W^{i_1, 2}_{n_1, n_2}).
   \end{split}
\]
We have the following diagram:
\[
   \xymatrix{
        J(W^{1,1}_{n_1, n_2}) \ar[r]^{k^1} \ar[d]_{l^1} & 
            \Sigma J(W^{2, 1}_{n_1,n_2}) \ar[d]^{\Sigma l^2}  \\
        \Sigma J(W^{1,2}_{n_1, n_2}) \ar[r]_{\Sigma k^2} & \Sigma^2 J(W^{2, 2}_{n_1,n_2})
   }
\]
We can see that the above diagram is commutative up to homotopy. Hence we have a morphism
\[
       L:\Sigma^{-1}C(k^1) \rightarrow C( \Sigma k^2).
\]
We define 
\[
       \SWF(Y,\fc, g, \bP) = \Sigma^{-1} C(L) \in \Ob(\fC).
\]
More precisely the definition is as follows. Let $\hat{k}^1, \hat{k}^2, \hat{l}^1, \hat{l}^2$ be the continuous maps which represent $k^1, k^2, l^1, l^2$ respectively, induced by choices of $T, \Delta$, $\lambda_j, \mu_j$. 
We consider the following diagram:
\begin{equation} \label{eq k l commu}
    \xymatrix{
      \Sigma^{2(V_+ \oplus V_+')}\Sigma^{ V_{\lambda}^0 } 
      I_{\lambda}^{\mu}(W_{n_1,n_2}^{1,1})
          \ar[r]^{\hat{k}^1} \ar[d]_{ \hat{l}^1 } &  
      \Sigma^{2( V_-  \oplus V_+' )} \Sigma^{ V_{\lambda}^0  } 
      \Sigma I_{\lambda}^{\mu}(W^{2,1}_{n_1,n_2})
          \ar[d]^{ \Sigma \hat{l}^2 } \\
      \Sigma^{2(V_+ \oplus V_-') }  \Sigma^{V_{\lambda}^0} 
      \Sigma I_{\lambda}^{\mu}(W^{1,2}_{n_1,n_2})
          \ar[r]_{ \Sigma \hat{k}^2 }
      &
      \Sigma^{2 (V_- \oplus V_-')  } \Sigma^{ V_{\lambda}^0 }
      \Sigma^2 I_{\lambda}^{\mu}(W^{2,2}_{n_1,n_2})
    }
\end{equation}

As in Section \ref{subsec fi fj}, if we choose  trivialization $\tilde{\tilde{\ft}}$ and $\ft$ of a vector space and a vector bundle on $[0,1]$, we get a homotopy from $\Sigma^{ 2\tilde{V}_- } (\Sigma \hat{l}^2 \circ \hat{k}^1)$ to $\Sigma^{ 2\tilde{V}_-'} (\Sigma \hat{k}^2 \circ \hat{l}^1)$. Here $\tilde{V}_-, \tilde{V}_-'$ are suitable vector spaces. Hence we have an induced continuous map
\[
     \hat{L}:  C( \Sigma^{2\tilde{V}_-}  \hat{k}^1)
              \rightarrow
               C( \Sigma^{2 \tilde{V}_-'}  \Sigma \hat{k}^2  ).
\]

\begin{dfn}
We define 
\[
   \begin{split}
   &  \SWF(Y, \fc, g, \bP) =  \\
   &  \SWF(Y, \fc, g, \bP; 
            A_0, \bn,  \Delta,  \{ \lambda(i,j), \mu(i,j) \}, 
              \ft_{\pm}, \ft, \tilde{\tilde{\ft}}, F) = \\
   &  ( C( \hat{L} ),  
      2\dim_{\R} (\tilde{V}_- \oplus V_+ \oplus V_+' \oplus V_{\lambda}^0)_{\R} + 2, 
    2 \dim_{\C} (\tilde{V}_- \oplus V_+ \oplus V_+' \oplus V_{\lambda}^0)_{\C} )  \\
   & \in \Ob (\fC).
   \end{split}
\]
Here $\bn = (n_1, n_2)$,  $F=( (f_1, f_2), (f_1', f_2') )$.
\end{dfn}

There is another way to define $\SWF(Y, \fc, g, \bP)$. The commutativity of the diagram (\ref{eq k l commu}) gives a continuous map
\[
        \hat{K}:
          C( \Sigma^{ \tilde{V}_-'' } \hat{l}_1) \rightarrow
          C( \Sigma^{ \tilde{V}_-'''} \Sigma \hat{l}_2).
\]
We define 
\[  
   \begin{split}
     &\SWF(Y, \fc, g, \bP) = \\
     & ( C( \hat{K} ), 2 \dim_{\R} ( \tilde{V}_-'' \oplus V_+ \oplus V_+'  )_{\R} + 2,  
         2 \dim_{\C} ( \tilde{V}_-'' \oplus V_+ \oplus V_+'  )_{\C}  ) \in \fC.
   \end{split}
\]
We can easily prove that this object is canonically isomorphic to the original one.

We assumed that $b_1(Y) = 2$ and $N = N' = 2$. However we can easily generalize this definition to any case, provided that $q_Y$ is trivial. In the case $b_1(Y) \geq 2$ we need trivializations $\ft$ of vector bundles on cubes $[0,1] \times \cdots \times [0,1]$. 
As in the case $b_1(Y) = 1$, for each submodule $H$ of $H^1(Y;\Z)$ of rank $b_1(Y)$, we can define a variant $\SWF(Y,\fc, H, g, \bP)$.

\begin{dfn}
Suppose that $q_Y = 0$.
Let $H$ be a submodule of $H^1(Y;\Z)$ of rank $b_1(Y)$ and take a set  $\{ h_1', \dots, h_{b}' \}$ of generators of $H$.  We denote by $\SWF(Y, \fc, H, g, \bP)$ the object of $\fC$ obtained by replacing $h_1, \dots, h_{b}$ with $h_1', \dots, h_{b}'$ in the construction of $\SWF(Y, \fc, g, \bP)$.
\end{dfn}

\begin{prop}  \label{prop indep 2}
The object $\SWF(Y, \fc, H, g, \bP)$ is independent of the choices of $A_0$, $\bn$, $\Delta$, $\lambda(i,j)$, $\mu(i,j)$, $\ft_{\pm}$, $\ft$, $\tilde{\tilde{\ft}}$ and $F$ up to canonical isomorphism in $\fC$.
\end{prop}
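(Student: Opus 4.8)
The argument follows the pattern of Proposition~\ref{prop indep 1}, handling the auxiliary data one at a time and quoting the comparison lemmas already proved; it suffices to treat $b_1(Y)=2$ and $N=N'=2$, since the general case requires only heavier notation. I would begin with the choices of a linear nature. A change of the trivializations $\ft_\pm$, $\ft$ or $\tilde{\tilde{\ft}}$ amounts to pre- or post-composing the relevant maps with $\Sigma^{2\alpha}$ for a $GL$-valued map $\alpha$ of a point, resp. of a contractible cube on which the vector bundles of Section~\ref{subsec fi fj} are a priori trivial. Because every such suspension occurs with the even multiplicity visible in $\Sigma^{2V_+}$, $\Sigma^{2\tilde{V}_-}$, etc., the induced self-map of the corresponding Thom space has degree $(\det\alpha)^2=+1$ in the real directions and lies in the connected group $GL(\C)$ in the complex directions; it is therefore canonically homotopic to the identity, so these choices do not change the object of $\fC$. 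The integer $\bn$ is dealt with by the translation isomorphisms $\ff_j$ of Section~\ref{subsec Iso f}: suitable powers of $\ff_1$ and $\ff_2$ produce a canonical isomorphism between the objects built from $\bn$ and from $\bn+e_j$, and the commutativity of $\ff_1$ and $\ff_2$ up to canonical homotopy established in Section~\ref{subsec fi fj} makes this independent of the order in which the translations are performed.

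Next come the spectral parameters and the partition. Enlarging the windows $(\lambda(i,j),\mu(i,j)]$ and comparing exactly as in Lemma~\ref{lem h.e. lambda mu} and Lemma~\ref{lem indep lambda mu} shows that, after the appropriate desuspensions, the Conley indices $I^\mu_\lambda(W^{i_1,i_2}_{n_1,n_2})$ and the flow maps $\hat k^1,\hat k^2,\hat l^1,\hat l^2$ are independent of the $\lambda(i,j),\mu(i,j)$; one must simply carry along the extra suspension coordinates so that the homotopy-commutative square (\ref{eq k l commu}) is preserved. For two partitions $\Delta,\Delta'$ I would pass to a common refinement and apply Lemma~\ref{lem f s s'' s'} in each variable, which identifies the morphisms $\ff_j$, hence $\hat k^{i_2}$ and $\hat l^{i_1}$, built from the coarse partition with those built from the refinement; canonicity follows since any two common refinements admit a further common refinement.

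The genuinely analytic inputs are the independence of $A_0$ and of the transverse double systems $F$. The space of flat connections on $\det\fc$ is affine, hence path-connected; running the spectral-section continuation argument of Lemma~\ref{lem f T_0} along a path of flat connections, moving in all of $\cH^1_g(Y)$ as already done in Section~\ref{subsec fi fj}, yields the canonical isomorphism for two choices of $A_0$, with canonicity coming from contractibility of the path space. For $F$, two transverse double systems (with respect to the fixed generators) are joined through a family of double systems not assumed transverse; for $R$, $-\lambda$, $\mu$ large the maximal invariant sets in $W^{i_1,i_2}_{n_1,n_2}\cap V^\mu_\lambda$ remain isolated along the family, so one obtains homotopy equivalences of Conley indices as in \cite{KM ver 1}, compatible with the flow maps $k^{i_2},l^{i_1}$ and with the $\ff_j$, and hence an isomorphism of the iterated mapping cones. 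Finally one checks that all of these isomorphisms are compatible with the passage between the $\hat L$-model and the $\hat K$-model of $\SWF(Y,\fc,g,\bP)$, which was already noted to be routine.

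The main obstacle is not any individual step — each mirrors something carried out for $b_1(Y)=1$ — but the coherence required to make them interact. Every comparison must be realized simultaneously at the level of the continuous maps $\hat k^{i_2}$, $\hat l^{i_1}$ \emph{together with} a choice of homotopy filling the square (\ref{eq k l commu}), and one has to verify, using the commutativity of $\ff_1$ and $\ff_2$, that these filling homotopies can be chosen compatibly so that the induced map on $C(\hat L)$ is well defined up to canonical homotopy; on top of this one must keep track of all the suspension and desuspension coordinates so that the final object carries exactly the integer shift $2\dim_\R(\tilde V_-\oplus V_+\oplus V_+'\oplus V_\lambda^0)_\R+2$ and the rational shift $2\dim_\C(\tilde V_-\oplus V_+\oplus V_+'\oplus V_\lambda^0)_\C$ prescribed in the definition. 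Carrying this bookkeeping through for $A_0$, $\bn$, $\Delta$, $\{\lambda(i,j),\mu(i,j)\}$, $\ft_\pm$, $\ft$, $\tilde{\tilde{\ft}}$ and $F$ at once is the technical core of the proof.
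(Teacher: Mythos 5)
Your argument for independence from the trivializations $\ft_\pm$, $\ft$, $\tilde{\tilde{\ft}}$ contains a genuine gap, and it is precisely the one the paper flags in the remark following Section~\ref{subsec ind ft}. You claim the doubled change-of-trivialization map ``lies in the connected group $GL(\C)$ in the complex directions; it is therefore canonically homotopic to the identity.'' Connectedness only gives \emph{existence} of a homotopy to the identity; canonicity would require that the path be unique up to homotopy, i.e.\ simple connectedness of the group, and $\pi_1(U(d'))=\Z$. Two different paths to the identity in $U(2d')$ induce genuinely different identifications of the two mapping cones, so this route produces an isomorphism but not a canonical one. The paper's own proof does \emph{not} homotope the self-map to the identity: it observes that $(2\ft'_\pm)\circ(2\ft_\pm)^{-1}$ intertwines $\hat k$ and $\hat k'$ \emph{strictly} (and likewise $\hat L$ and $\hat L'$), so the square commutes on the nose and one gets a homeomorphism $C(\hat k)\to C(\hat k')$ induced by a specific map, with no homotopy to choose. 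Your argument as written would establish a weaker statement (well-definedness up to \emph{some} isomorphism), not the stated one (well-definedness up to \emph{canonical} isomorphism).

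Your treatment of the transverse double systems $F$ also takes a different and shakier route than the paper. You propose joining two systems through a one-parameter family ``not assumed transverse'' and running a continuation argument. But the very decomposition $\widetilde W^{i_1,i_2}_{n_1,n_2} = \bigcup W^{i_1,i_2}_{n_1,n_2}$ into the pieces on which $J(\cdot)$, $\hat k^{i_2}$, $\hat l^{i_1}$ and $\ff_j$ are built depends on the transversality condition (iii) of Definition~\ref{dfn transverse}; once transversality fails, the $W$'s lose the property that $\Inv(W\cap V^\mu_\lambda)\subset \Int(W\cap V^\mu_\lambda)$, and the flow maps are no longer defined. The paper sidesteps this by a combinatorial refinement argument: fix $f_2=\tilde f_2$, express each $W^i_n$ and $\widetilde W^i_n$ as a union of common pieces $Z^i_n$, and compare the iterated mapping cones via the explicit exact triangles (\ref{eq J cano iso})--(\ref{eq exact C(K^1)}), which requires no continuation through degenerate systems. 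Your other points (translation by $\ff_j$ for $\bn$ using Section~\ref{subsec fi fj}, common refinements of $\Delta$ via Lemmas~\ref{lem h.e. lambda mu}, \ref{lem indep lambda mu}, \ref{lem f s s'' s'}, and path-connectedness of flat connections for $A_0$) do follow the paper's template.
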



\subsection{Proof of Proposition \ref{prop indep 1} and Proposition \ref{prop indep 2}}
\label{subsec proof of prop indep}

\subsubsection{Independence from $\Delta$, $\{ \lambda_j, \mu_j \}_j$, $\ft_{\pm}$, $\ft$, $\tilde{\tilde{\ft}}$  } \label{subsec ind ft}
To simplify notation, we suppose that $b_1(Y) = 1$, $N = 2$, $H = H^1(Y;\Z)$. The proof for the general case is similar. 

Fix $\Delta, \lambda_j, \mu_j, \ft_{\pm}$.  Take another trivializations $\ft_{\pm}'$ of $V_{\pm}$. (Since $b_1(Y) = 1$, we do not need to take trivializations $\ft$ of vector bundles on $[0,1] \times \cdots \times [0,1]$ and $\tilde{\tilde{\ft}}$ of the vector space $\tilde{\tilde{V}}$. The proof of  independence from $\ft$ and $\tilde{\tilde{\ft}}$ is similar to that of independence from $\ft_{\pm}$.) We get another continuous map $\hat{k}'$ which represents the morphism $k$.
Then we have the following diagram:
\[
     \xymatrix{
       \Sigma^{2(\R^{d} \oplus \C^{d'})} \Sigma^{V_{\lambda}^0} 
       I_{\lambda}^{\mu}(W^1_n)
       \ar[r]^{\hat{k}} \ar[d]_{ (2\ft_+') \circ (2\ft_{+}^{-1})  } &
           \Sigma^{2(\R^{d} \oplus \C^{d'})} \Sigma^{V_{\lambda}^0}
           \Sigma I_{\lambda}^{\mu}(W^2_n)
           \ar[d]^{ (2\ft_-') \circ (2\ft_-^{-1})  }   \\
       \Sigma^{2(\R^{d} \oplus \C^{d'})} \Sigma^{V_{\lambda}^0} 
       I_{\lambda}^{\mu}(W^1_n) \ar[r]_{\hat{k}'} &
           \Sigma^{2(\R^{d} \oplus \C^{d'})} \Sigma^{V_{\lambda}^0}
           \Sigma I_{\lambda}^{\mu}(W^2_n)
     }
\]
This diagram is strictly commutative. Hence we get a homeomorphism
\[
     C(\hat{k}) \stackrel{\cong}{\rightarrow} C(\hat{k}').
\]
Hence we obtain an isomorphism
\[
          \SWF(Y, \fc, g, \bP;\ft_{\pm}) \stackrel{\cong}{\rightarrow}
          \SWF(Y, \fc, g, \bP;\ft_{\pm}')
\]
as required.

\begin{rem}
Since $\pi_0(O(d)) = \Z_2, \pi_0(U(d')) = 0$, we can take a homotopy from $2\ft_{\pm}$ to $2\ft_{\pm}'$. Using this homotopy, we get an isomorphism from $\SWF(Y, \fc, g, \bP;\ft_{\pm})$ to $\SWF(Y, \fc, g, \bP;\ft_{\pm}')$. However this isomorphism is not canonical, since $\pi_1(U(d')) = \Z$ and the homotopy from $2\ft_{\pm}$ to $2\ft_{\pm}'$ is not unique up to homotopy.
\end{rem}

Take another $\Delta', \lambda_j', \mu_j', \ft_{\pm}'$. We get another continuous map $\hat{k}'$.
It is sufficient to consider the case $\Delta \subset \Delta'$. By Lemma \ref{lem indep lambda mu}, we may suppose that $V_{\pm}' = V_{\pm} \oplus V_{\pm}''$ for some vector space $V_{\pm}''$ coming from $\Delta' \backslash \Delta$. Since we have proved the independence from the trivializations, we may suppose that $\ft_{\pm}' = \ft_{\pm} \oplus \ft_{\pm}''$ for some trivialization $\ft''_{\pm}$ of $V_{\pm}''$. Hence we have a canonical isomorphism
\begin{equation} \label{eq hat k cong}
     C(\hat{k}') = \Sigma^{2V_{+}'' - 2V_{-}''} C(\hat{k}) \cong C(\hat{k})
\end{equation}
in $\fC$. Here we have used the trivializations
\[
       V_+'' \stackrel{\ft_{+}''}{\rightarrow} \R^{d''_{\R}} \oplus \C^{d''_{\C}} 
        \stackrel{\ft_-''}{\leftarrow} V_{-}''.
\]
The isomorphism (\ref{eq hat k cong}) is independent of $\ft''_{\pm}$ since $\pi_0(O(d''_{\R})) = \Z_2, \pi_0(U(d''_{\C})) = 0$.

We can see the following diagram is commutative:
\[
     \xymatrix{
        \SWF(Y,\fc, g, \bP; \fd) \ar[rd]_{\cong} \ar[rr]^{\cong}  & &
        \SWF(Y,\fc, g, \bP; \fd'')  \\
         & \SWF(Y,\fc, g, \bP;\fd') \ar[ru]_{\cong} &
     }
\]
Here $\fd =( \Delta, \lambda_j, \mu_j, \ft_{\pm})$, $\fd' =( \Delta', \lambda_j', \mu_j', \ft_{\pm}')$, $\fd =( \Delta', \lambda_j', \mu_j', \ft_{\pm}')$.

If $b_2(Y) \geq 2$, we need to take a trivialization $\ft$ of a vector bundle on a cube and a trivialization $\tilde{\tilde{\ft}}$ of a vector space $\tilde{\tilde{V}}$. The proof of the independence from $\ft$ and $\tilde{\tilde{\ft}}$ is similar.

\subsubsection{Independence from $A_0$} \label{subsec ind A0}

Let $A_0'$ be another flat connection. 
As in Section \ref{subsec Iso f}, we can prove that there is a canonical isomorphism
\[
     \varphi: J(W^i_n;A_0, g, \bP)  \stackrel{\cong}{\rightarrow} J(W^i_n;A_0', g, \bP).
\]
We consider the following diagram:
\[
       \xymatrix{
    J(W^i_n;A_0, g, \bP) 
        \ar[r]^{\cong }_{\varphi} \ar[d]_{k}  & 
    J(W^i_n;A_0', \bP) \ar[d]^{k'} \\
    \Sigma J(W^{i+1}_n;A_0, g, \bP) 
         \ar[r]^{\cong }_{\varphi} &  
    \Sigma J(W^{i+1}_n;A_0', g, \bP)  &
       }
\]
This diagram is commutative up to canonical homotopy. More precisely, as in Section \ref{subsec fi fj}, we can prove that after taking a trivialization $\ft$ of a vector bundle $W$ on $[0,1]$ we can define a homotopy $H$ between $\Sigma^{2 \tilde{V}_-} (\hat{k}' \circ \hat{\varphi})$ and $\Sigma^{2 \tilde{V}'} (\hat{\varphi} \circ \hat{k})$, using the trivialization $2\ft$ of $2W = W \oplus W$.  Here $\tilde{V}_-$ and $\tilde{V}_-'$ are suitable finite dimensional vector spaces, and $\hat{k}, \hat{k}', \hat{\varphi}$ are continuous maps which represent $k, k', \varphi$ induced by choices of a partition $\Delta$ of $[-1,0]$ and positive large numbers $T, -\lambda_j, \mu_j$ .  The homotopy $H$ and $\varphi$ induce an isomorphism
\[
      \Sigma^{ 2\tilde{V}_- } C( \hat{k} ) \stackrel{\cong}{\rightarrow}
      \Sigma^{ 2\tilde{V}_-'} C( \hat{k}').
\]
Fix trivializations $\tilde{\ft}_-$ and $\tilde{\ft}_-'$ of $\tilde{V}_-$ and $\tilde{V}_-'$. Then, taking desuspensions, we get an isomorphism
\[
        \SWF(Y, \fc, g, \bP; A_0) \stackrel{\cong}{\rightarrow} \SWF(Y, \fc, g, \bP; A_0').
\]
We can prove that this isomorphism is independent of $\ft, \tilde{\ft}_-, \tilde{\ft}_-'$ as in the previous subsection. We can also prove that the following diagram is commutative:
\[
     \xymatrix{
        \SWF(Y,\fc, g, \bP; A_0) \ar[rd]_{\cong} \ar[rr]^{\cong}  & &
        \SWF(Y,\fc, g, \bP; A_0'')  \\
         & \SWF(Y,\fc, g, \bP;A_0') \ar[ru]_{\cong} &
     }
\]

\subsubsection{Independence from $F$}
\label{subsec ind trans}

We will prove the independence from the choice of transverse double systems. First we suppose that $b_1(Y) = 1$.
Take two transverse double systems $(f_1, f_2)$ and $(\tilde{f}_1, \tilde{f}_2)$. We want to show that $\SWF(Y, \fc, g, \bP; f_1, f_2)$ and $\SWF(Y, \fc, g, \bP; \tilde{f}_1, \tilde{f}_2)$ are canonically isomorphic in $\fC$. Write $\widetilde{W}_n^i$ for the subset $W_n^i$ of $Str(R)$ associated with $(\tilde{f}_1, \tilde{f}_2)$. It is sufficient to consider the case where $f_2 = \tilde{f}_2$.
For simplicity, suppose that $N = 2, \tilde{N} = 3$, where $N, \tilde{N}$ are the numbers of $V_{i}, \tilde{V}_i$ which intersect $U_n, \tilde{U}_n$ respectively, as in Section \ref{ss transverse}. By renumbering if necessary, we have  $W^1_n \cup W^2_n = \widetilde{W}^1_n \cup \widetilde{W}^2_n \cup \widetilde{W}^3_n$.
First we suppose that $\widetilde{W}^3_n \subset W^2_n$.
Then we can write
\[
      W^1_n = \widetilde{W}_n^1 \cup Z_n^1, \
      W^2_n = \widetilde{W}_n^3 \cup Z_n^2, \
      \widetilde{W}_n^2 = Z^1_n \cup Z_n^2.
\]
We have canonical isomorphisms
\begin{equation} \label{eq J cano iso}
    \begin{split}
 & \Sigma J(W^1_n) \cong C \big( l_1:J(\widetilde{W}_n^1) \rightarrow \Sigma J(Z^1_n) \big), \\
 & \Sigma J(W^2_n) \cong C \big( l_2:J(Z^2_n ) \rightarrow \Sigma J(\widetilde{W}^3_n) \big), \\
 & \Sigma J( \widetilde{W}^2_n ) \cong C \big( l_3:J(Z^1_n) \rightarrow \Sigma J(W^2_n)  \big).
    \end{split}
\end{equation}
By definition, we have
\[
          \SWF(Y,\fc, g, \bP; f_1, f_2) = \Sigma^{-1}C(k)
\]
where
\[
         k:J(W^1_n) \rightarrow \Sigma (J(W^2_{n-1}) \vee J(W^2_n))
                    \rightarrow \Sigma J(W^2_n).
\]
On the other hand
\[
        \SWF(Y, \fc, g, \bP; \tilde{f}_1, \tilde{f}_2) 
          = \Sigma^{-1} C(\tilde{K})
\]
where $\tilde{K}$ is the morphism $\Sigma^{-1} C(\tilde{k}^1) \rightarrow \Sigma J( \widetilde{W}^i_n)$.
We want to show that there is a canonical isomorphism between $C(k)$ and $C(\tilde{K})$. We have the exact triangles 
\begin{equation} \label{eq exact C(K)}
   \begin{split}
    &  \Sigma^{-1}C(\tilde{k}^1) \stackrel{\tilde{K}}{\longrightarrow} 
       \Sigma J(\widetilde{W}^3_n) \longrightarrow
       C( \tilde{K}),  \\
    &  J(W^1_n) \stackrel{k}{\longrightarrow} \Sigma J(W^2_n) \longrightarrow C(k).
   \end{split}
\end{equation}
We also have the exact triangle
\begin{equation} \label{eq exact W^2_n}
     J(Z^2_n) \longrightarrow \Sigma J(\widetilde{W}^3_n) 
              \longrightarrow \Sigma J(W^2_n).
\end{equation}
Hence we have the following diagram:
\[
   \xymatrix{
     \Sigma^{-1} C( \tilde{k}^1) \ar[r]^{\tilde{K}} & 
                   \Sigma J( \widetilde{W}_n^3) \ar[r] \ar[d]   & C(\tilde{K})  \\
     J(W^1_n) \ar[r]^{k} & \Sigma J(W^2_n) \ar[r] \ar[d] & C(k)    \\
                         & \Sigma J(Z^2_n)  &
   }
\]
Next we show that the following is exact:
\begin{equation} \label{eq exact C(K^1)}
        \Sigma J(Z^2_n) \longrightarrow C( \tilde{k}^1 ) \longrightarrow \Sigma J(W^1_n).
\end{equation}
Here the morphism $\Sigma J(Z^2_n) \rightarrow C( \tilde{k}^1 )$ is defined as follows.
We  can write
\[
       C( \tilde{k}^1) =
         C J(\widetilde{W}^1_n) \cup_{\tilde{k}^1} \Sigma J( \widetilde{W}^2_n ).
\]
Moreover there is a canonical isomorphism
\[
        \Sigma J( \widetilde{W}^2_n) \cong CJ(Z^1_n) \cup_{l_2} \Sigma J(Z^2_n),
\] 
The morphism $\Sigma J(Z^2_n) \rightarrow C( \tilde{k}^1 )$ is given by
\[
          \Sigma J(Z^2_n) 
            \rightarrow CJ(Z^1_n) \cup_{l_2} \Sigma J(Z^2_n) 
            \cong \Sigma J(\widetilde{W}^2_n)
            \rightarrow C(\tilde{k}^1).
\]
Collapsing $\Sigma J(Z^2_n)$ into one point, we get
\[
      C(\tilde{k}^1) / \Sigma J(Z^2_n) \cong \Sigma J(W^1_n)
\]
Note that the composition
\[
     J(\widetilde{W}^1_n) \stackrel{ \tilde{k}^1}{\longrightarrow}
     \Sigma J( \widetilde{W}^2_n ) \longrightarrow
     \Sigma J( \widetilde{W}^2_n )/
            \Sigma J(Z^2_n)
      = \Sigma J(Z^1_n)
\]
is the usual morphism $J(\widetilde{W}^1_n) \rightarrow \Sigma J(Z^1_n)$. 
Therefore the sequence (\ref{eq exact C(K^1)}) is exact.

From (\ref{eq exact C(K)}), (\ref{eq exact W^2_n}) and (\ref{eq exact C(K^1)}), we get the following diagram:
\[
     \xymatrix{
         \Sigma^{-1} C(\tilde{k}^1) \ar[r] \ar[d] & 
         \Sigma J( \widetilde{W}^3_n) \ar[r] \ar[d] &
         C(\tilde{K})  \\
         J(W^1_n) \ar[r] \ar[d] & \Sigma J(W^2_n) \ar[r] \ar[d] & C(k) \ar[d]    \\
         \Sigma J(Z^2_n) \ar[r]^{\id} & \Sigma J(Z^2_n) \ar[r] & {*}
     }
\]
We can see that this diagram is commutative up to canonical homotopy. More precisely, as in the previous subsection, to see the homotopy commutativity of the diagram, we need to fix some trivializations of vector spaces and a vector bundle on $[0,1]$. We omit the details.  The homotopy commutativity of this diagram induces the canonical isomorphism
\[
        C( \tilde{K}) \cong C(k)
\]
as required.

Next we consider the case $\widetilde{W}^3_n \not\subset W^2_n$.
In this case we can write
\[
   \begin{split}
      & W^1_n = \widetilde{W}^1_n \cup Z^1_n \cup Z^2_n, \
        W^2_n = Z^3_n \cup Z^4_n,       \\
      & \widetilde{W}^2_n = Z^1_n \cup Z^3_n,       \
        \widetilde{W}^3_n = Z^2_n \cup Z^4_n.
   \end{split}
\]
Put 
\[
     \widetilde{W}^1_n \ \!' = \widetilde{W}^1_n, \ 
     \widetilde{W}^2_n \ \!' = Z^1_n \cup Z^2_n \cup Z^3_n, \ 
     \widetilde{W}^3_n \ \!' = Z^4_n.
\]
We can define the morphisms:
\[
   \begin{split}
     & \tilde{k}^1 \ \! ': 
        J(\widetilde{W}_n^{1} \ \! ' ) \rightarrow
        \Sigma J(\widetilde{W}_{n-1}^2 \ \! ') \vee \Sigma J(\widetilde{W}_n^2 \ \! ')
              \rightarrow
                    \Sigma J(\widetilde{W}_n^2 \ \! '), \\
     & \tilde{K}': \Sigma^{-1} C ( \tilde{k}^1 \ \! ' ) \rightarrow 
                    \Sigma J(\widetilde{W}^3_n \ \!').
    \end{split}
\]
It is easy to see that $C( \tilde{K})$ and $C( \tilde{K}')$ are canonically isomorphic to each other.
Since $\widetilde{W}_n^3 \ \! ' \subset W^2_n$, we can prove that $C(k)$ is canonically isomorphic to $C( \tilde{K}' )$ as before. Therefore $C(k)$ is canonically isomorphic to $C( \tilde{K})$.

Although we assumed that $N = 2, \tilde{N} = 3$, we can generalize our discussion to any case.

Suppose that $b_1(Y) = 2$. Let $\{ h_1, h_2 \}$ and $\{ \tilde{h}_1, \tilde{h}_2 \}$ be sets of generators of $H^1(Y;\Z)$. Take transverse double systems $F = ( (f_1, f_2), (f_1', f_2'))$, $\tilde{F} = ((\tilde{f}_1, \tilde{f}_2), (\tilde{f_1}', \tilde{f}_2'))$ with respect to $\{ h_1, h_2 \}$, $\{ \tilde{h}_1, \tilde{h}_2 \}$. We want to show that $\SWF(Y, \fc, g, \bP; F)$ is isomorphic to $\SWF(Y,\fc, \bP; \tilde{F})$. It is sufficient to consider the case $h_1 = \tilde{h}_1$. As in the case where $b_1(Y) = 1$, we can show that there are canonical isomorphisms
\[
         C( k^1) \stackrel{\cong}{\rightarrow}
         C( \tilde{k}^1), \quad
         C(k^2) \stackrel{\cong}{\rightarrow}
         C (\tilde{k}^2).
\]
Moreover we can see that the following diagram is commutative up to canonical homotopy:
\[
     \xymatrix{
        C(k^1) \ar[r]^{\cong} \ar[d]_{L} & C(\tilde{k}^1) \ar[d]^{ \tilde{L} }  \\
        C(k^2) \ar[r]_{\cong} & C(\tilde{k}^2)
              }
\]
Hence we get an isomorphism $\SWF(Y, \fc, g, \bP; F) \cong \SWF(Y, \fc, g, \bP; \tilde{F})$.
We can see that the following diagram is commutative:
\[
     \xymatrix{
        \SWF(Y,\fc, g, \bP; F) \ar[rd]_{\cong} \ar[rr]^{\cong}  & &
        \SWF(Y,\fc, g, \bP; \tilde{\tilde{F}})  \\
         & \SWF(Y,\fc, g, \bP; \tilde{F}) \ar[ru]_{\cong} &
     }
\]


\subsubsection{Independence from $\bn$}

Assume that $b_1(Y) = 2$ for simplicity. 
We will see that $\ff_1$ induces an isomorphism
\[
     \Phi_{\ff_1}:\SWF(Y,\fc, g, \bP; n_1, n_2)
                  \stackrel{\cong}{\rightarrow}
                  \SWF(Y, \fc, g, \bP; n_1+1, n_2).
\]
We have the isomorphism
\[
    \ff_1:J(W_{n_1, n_2}^{i_1, i_2}) \stackrel{\cong}{\rightarrow}
              J(W_{n_1+1, n_2}^{i_1, i_2}).
\]
This induces a homotopy equivalence
\[
       \hat{\varphi}_{\ff_1}:
        C( \hat{k}^1;n_1,n_2) \rightarrow
        C( \hat{k}^1;n_1+1,n_2).
\]
Consider the following diagram:
\[
      \xymatrix{
        C( \hat{k}^1;n_1,n_2) 
             \ar[r]^{ \hat{\varphi}_{\ff_1}} \ar[d]_{\hat{L}}  & 
             C( \hat{k}^1;n_1+1,n_2) \ar[d]^{\hat{L}}     \\
        C( \hat{k}^2;n_1,n_2) 
             \ar[r]_{ \hat{\varphi}_{\ff_1} } &
        C( \hat{k}^2;n_1+1,n_2)
      } 
\]
As in Section \ref{subsec fi fj}, we can construct a homotopy from $\Sigma^{ 2V_- } (\hat{\varphi}_{\ff_1} \circ \hat{L})$ to $\Sigma^{2V_-'} (\hat{L} \circ \hat{\varphi}_{\ff_1})$ if we choose a trivialization $\ft$ of a vector bundle on $[0,1]$. Here $V_-, V_-'$ are suitable vector spaces.
This homotopy induces an homotopy equivalence
\begin{equation} \label{eq iso W L}
                 C(\Sigma^{2V_-} \hat{L};n_1,n_2 ) \rightarrow 
                 C(\Sigma^{2V_-'} \hat{L};n_1+1, n_2).
\end{equation}
On the other hand, fix trivializations 
\[
            V_- \stackrel{\ft_-}{\rightarrow} \R^{d_1} \oplus \C^{d_2}
                \stackrel{\ft_-'}{\leftarrow} V_-'
\]
Then in the category $\fC$ we get isomorphisms induced by $\ft_-, \ft'_-$:
\begin{equation} \label{eq iso L 1}
   \begin{split}
      & C(\hat{L}; n_1,n_2) \stackrel{\cong}{\rightarrow}
        (C(\Sigma^{2V_-} \hat{L},n_1, n_2), d_1, d_2),  \\
      & C(\hat{L}; n_1+1,n_2) \stackrel{\cong}{\rightarrow}
        ( C(\Sigma^{2V_-'} \hat{L}; n_1+1, n_2), d_1, d_2 )
   \end{split}
\end{equation}
Combining (\ref{eq iso W L}) and (\ref{eq iso L 1}), we get an isomorphism
\[
         \Phi_{\ff_1}:\SWF(Y, \fc, g, \bP;n_1,n_2) \stackrel{\cong}{\rightarrow}
                      \SWF(Y, \fc, g, \bP; n_1+1, n_2).
\]
Since $\pi_0(O(N)) = \Z_2, \pi_0(U(N)) = 0$, $\Phi_{\ff_1}$ is independent of $\ft, \ft_-, \ft_-'$.
The proof for the case $b_1(Y) \geq 3$ is similar.

We can prove the following digram is commutative:

\[
      \xymatrix{
     \SWF(Y,\fc, g, \bP;\bn) \ar[rr]^{\Phi_{\ff_j \circ \ff_i}} \ar[rd]_{\Phi_{\ff_i}} & 
                          & \SWF(Y, \fc, g, \bP, \bn'')   \\
         & \SWF(Y, \fc, g, \bP; \bn')  \ar[ru]_{ \Phi_{\ff_j} }
      }
\]

\section{Relative invariant}

\subsection{Stable cohomotopy version of Seiberg-Witten invariants for closed manifolds}

Before we construct the relative stable cohomotopy version of Seiberg-Witten invariants for 4-manifolds with boundary, we briefly review the construction of the invariant for closed 4-manifolds. See \cite{BF} for the detail.

Let $X$ be a closed, oriented 4-manifold and choose a Riemannian metric $\hat{g}$ on $X$.
Take a spin-c structure $\hat{\fc}$ of $X$ and fix  a connection $\hat{A}_0$ of $\det \hat{\fc}$. We denote by $\bbS^+$, $\bbS^-$ the spinor bundles on $X$ associated with $\hat{\fc}$ and by $\hat{\rho}:T^* X \rightarrow \Hom(\bbS^+, \bbS^-)$ the Clifford multiplication. Let $\Omega_{\hat{g}}^1(X)$ be the image of $\hat{d}^*:\Omega^2(X) \rightarrow \Omega^1(X)$ and $\Omega^+_{\hat{g}}(X)$ be the space of self-dual 2-forms on $X$.
Put
\[
    \begin{split}
    & \cE(X) = 
       L^2_{k+1}(\sqrt{-1} \Omega_{\hat{g}}^1(X) \oplus \Gamma(\bbS^+)  ),
         \\
    & \cF(X) = 
       L^2_{k}( \sqrt{-1} \Omega^+_{\hat{g}}(X) \oplus \Gamma(\bbS^-)  ).
    \end{split}
\]
We define $U(1)$-actions on $\cE(X)$ and $\cF(X)$ by multiplications on $\bbS^+$ and $\bbS^-$. The Seiberg-Witten map is defined by
\[
    \begin{array}{rcl}
      SW:\cE(X)     & \rightarrow & \cF(X) \\
      (\hat{a}, \hat{\phi}) & \mapsto     & 
       (F_{\hat{A}_0+ \hat{a}}^+ + q( \hat{\phi}), 
        D_{\hat{A}_0 + \hat{a}} \hat{\phi}).
    \end{array}
\]
Here $F_{\hat{A}_0+\hat{a}}^+$ is the self-dual part of the curvature $F_{ \hat{A}_0+\hat{a}}$ and $q(\hat{\phi})$ is an endomorphism of $\bbS^+$ defined by $\hat{\phi} \otimes \hat{\phi}^* - \frac{1}{2} | \hat{\phi} |^2 \id$ which is considered to be a self-dual 2-form through an isomorphism $\Lambda^+ T^*X \cong \End(\bbS^+)$ induced by $\hat{\rho}$.

We take a finite dimensional approximation of the map $SW$ as follows. We can write $SW$ as $L + C$, where $L$ is the linear part of $SW$ and defined by $L(\hat{a}, \hat{\phi}) = (d^+ \hat{a}, D_{ \hat{A}_0} \hat{\phi})$, and $C(\hat{a}, \hat{\phi}) = ( F_{\hat{A}_0}^+ + q(\hat{\phi}), \rho(\hat{a}) \hat{\phi})$. 
Let $U$ be finite dimensional subspace of $\cF(X)$ such that $\Image L + U = \cF(X)$ and put $U' = L^{-1}(U)$. Then using the compactness of the Seiberg-Witten moduli space, we can show that the map
\[
      SW_U = \pr_U \circ SW|_{U'}:U' \rightarrow U
\]
extends a map
\[
       SW_U^+:(U')^+ \rightarrow U^+.
\]
Here $U^+$ and $(U')^+$ are the one point compactification of $U$ and $U'$ respectively.
Bauer and Furuta \cite{BF} showed that for sufficiently large $U$, the $U(1)$-equivariant homotopy class of $SW_U^+$ is stable (in a suitable sense.) Hence we get  an element $\psi_{X, \hat{\fc}}$ of a $U(1)$-equivariant stable cohomotopy group $\pi^{b^+(X)}_{U(1)}(*;\Ind D_{\hat{\fc}})$ and $\psi_{X, \hat{\fc}}$ is an invariant of $X$ which is independent of the choices of $\hat{g}$ and $U$. More precisely, in \cite{BF}, Bauer and Furuta constructed an element $\Psi_{X, \hat{\fc}}$ of a stable cohomotopy group $\pi^{b^+(X)}_{U(1)}(\Pic(X);\Ind D_{ \hat{\fc} })$ of the Thom space of the index bundle on $\Pic(X)$ of a family of Dirac operators. $\psi_{X, \hat{\fc}}$ is the restriction of $\Psi_{X, \hat{\fc}}$ to the fiber of the index bundle.

\subsection{Relative invariant}     \label{subsec relative inv}
We will define the relative invariant following \cite{Khandhawit, KM ver 1, KM ver 3, Mano b1=0}. Let $Y$ be a closed, oriented 3-manifold with $q_Y = 0$. Take a Riemannian metric $g$, a spin-c structure $\fc$ on $Y$ with $c_1(\fc)$ torsion and a spectral section $\bP = \{ P_h \}_{[h] \in \Pic(Y)}$ for the family $\bD_{\fc}$ of Dirac operators on $Y$. Let $X_1$ be a compact, oriented 4-manifold with $\partial X_1 = Y$. Fix a Riemannian metric $\hat{g}_1$ and a spin-c structure $\hat{\fc}_1$ on $X_1$ with $\hat{g}_1|_Y = g, \hat{\fc}_1|_{Y} = \fc$ and a connection $\hat{A}_1$ on $\det \hat{\fc}_1$ with $\hat{A}_1|_{Y} = A_0$, where $A_0$ is a fixed flat connection on $\det \fc$.
Put
\begin{gather*}
     \Omega_{\hat{g}_1}^1(X_1) =  \\
     \left\{  \ \hat{a}_1
       \ \left| 
       \begin{array}{l}
   \hat{a}_1 \in \sqrt{-1} \ker ( \hat{d}^* : \Omega^1(X_1) \rightarrow \Omega^0(X_1)  ), \\
   \hat{d}^*( i^* \hat{a}_1 ) = 0,  \\
   \int_{Y_j} \hat{a}_1(\nu) = 0 \ ( j = 1, \dots, r)
       \end{array}
     \right.
     \right\}.
\end{gather*}
Here $i$ is the inclusion $Y \hookrightarrow X_1$,  $\nu$ is the normal vector field on $Y$ and $Y_j$ is a connected components of $Y = Y_1 \coprod \cdots \coprod Y_r$. The boundary condition in the definition of $\Omega_{g_1}^1(X_1)$ was introduced by Khandhawit in \cite{Khandhawit} and is called the double Coulomb condition. Let $\cU_{X_1}$ be the orthogonal complement in $L^2_{k+1}(\Omega^1_{\hat{g}_1}(X_1))$ of the space $\cH^1(X_1)$ of harmonic 1-forms on $X_1$ satisfying the double Coulomb condition.
The Seiberg-Witten map $SW^{\mu}$ of $X_1$ is 
\[
  \begin{array}{rcl}
    SW^{\mu}: 
      \cU_{X_1} \oplus  L^2_{k+1}(\Gamma (\bbS^+)) &
      \rightarrow & 
     L^2_{k}( \Omega_{\hat{g}_1}^+(X_1) \oplus \Gamma(\bbS^-) ) \oplus V^{\mu} \\
    \hat{x}_1 = (\hat{a}_1, \hat{\phi}_1) & \mapsto & 
      ( sw( \hat{x}_1), p^{\mu}  (i^*\hat{x}_1) ),
  \end{array}
\]
where $V^{\mu}$ is the subspace of $V$ spanned by eigenvectors of $D_{A_0} + B_{0}^{\bP}$ with eigenvalues in $(-\infty, \mu]$ and
\[
     sw( \hat{x}_1) = 
       (F_{ \hat{A_1}+\hat{a}_1}^+ + q(\hat{\phi}_1), 
         D_{ \hat{A}_1 + \hat{a}_1} \hat{\phi}_1).
\]

Take small neighborhoods $N, N'$ of $Y$ in $X_1$ with $N \subset N'$ and a smooth function $\tau:X_1 \rightarrow [0,1]$ with $\tau = 1$ on $N$ and $\tau = 0$ on $X_1 \backslash N'$.
We write $\hat{L}_{\hat{A}_1, \bP}$ for the following operator:
\[
      \hat{L}_{ \hat{A}_1, \bP } = 
      \hat{d}^+ \oplus ( D_{\hat{A}_1} +  B_{0}^{\bP} \tau) \oplus p^{\mu} i^*.
\]
This operator is Fredholm.
We can take a finite dimensional subspace 
\[
   U_1 \subset
    L^2_{k}( \Omega^+(X_1) \oplus \Gamma(\bbS^-) )
\] 
such that $\Image \hat{L}_{\hat{A}_1, \bP}$ and $U_1 \oplus  V_{\lambda}^{\mu}$ are transverse for $\lambda \ll 0$.
Write $U_1'$ for the preimage of $U_1 \oplus V_{\lambda}^{\mu}$ by $\hat{L}_{\hat{A}_1, \bP}$.
We get a finite dimensional approximation
\[
     SW_{U_1, \lambda}^{\mu} = 
      \pr_{U_1 \oplus V_{\lambda}^{\mu}} \circ SW^{\mu}|_{U_1 '}: 
     U_1' \longrightarrow U_1 \oplus V_{\lambda}^{\mu}
\]
of the Seiberg-Witten map. 
We will show that this map defines a morphism
\begin{gather*}
    \psi_{X_1} =\psi_{X_1, \hat{\fc}_1, H, g, \bP} \\
      \in
        \{ (\Sigma^{- V_{\lambda}^{0}} (U_1')^+, 0), 
           U_1^+ \wedge \SWF(Y, \fc, H, g, \bP) \}^{U(1)} \\
      = \{ (\C^a)^+, \Sigma^{b^+(X)} \SWF(Y, \fc, H, g, \bP) \}^{U(1)}.
\end{gather*} 
in $\fC$. Here $a$ is the numerical index of the Dirac operator on $X_1$ and $H$ is a submodule of $H^1(Y;\Z)$ of rank $b_1(Y)$.

Assume that $b_1(Y) = 1$. (The general case is similar.)
Write $\gamma_{\lambda}^{\mu}$ the flow on $V_{\lambda}^{\mu}$ induced by $CSD$.
Take positive large numbers $R \gg R'_1 \gg 0$ and choose a transverse double system $(f_1, f_2)$ on $Str(R)$. For simplicity, we suppose that $N = 2$.
If $n$ is large, for any $U_1$, we have
\[
   \begin{split}
     & i^*(B(U_1', R'_1)) \subset W^{1}_{-n} \cup W^2_{-n} \cup \cdots \cup W^1_{n} \cup W^2_n.
   \end{split}
\]
Here $W^i_{k} \subset Str(R)$ is defined as in Section \ref{ss transverse}.
Put $\widetilde{W}:= W^{1}_{-n} \cup W^2_{-n} \cup \cdots \cup W^1_{n} \cup W^2_n$.

\begin{lem} \label{lem N L}
There is an index pair $(N, L)$ for $\Inv ( \widetilde{W} \cap V_{\lambda}^{\mu})$ such that
\begin{equation} \label{eq cond N L}
      N \subset \widetilde{W} \cap V_{\lambda}^{\mu},
         \quad
      p_{\lambda}^{\mu}( i^*(B(U_1', R'_1))  ) \subset N \backslash L.
\end{equation}

\end{lem}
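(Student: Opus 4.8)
The plan is to realize $(N,L)$ as an index pair of the kind furnished by Conley's theorem for the isolating neighborhood $\widetilde{W}\cap V_{\lambda}^{\mu}$, and then to verify the extra inclusion by a scale-separation argument.

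First I would record that, for $R$, $-\lambda$ and $\mu$ large enough, $\widetilde{W}\cap V_{\lambda}^{\mu}$ is an isolating neighborhood of $S:=\Inv(\widetilde{W}\cap V_{\lambda}^{\mu};\gamma_{\lambda}^{\mu})$; this is the statement recalled earlier (following \cite{KM ver 1}), applied to the finite union $\widetilde{W}$. Moreover the transversality built into the transverse double system (condition (3) of Definition \ref{dfn transverse}) makes $\nabla CSD$ transverse to each of the finitely many walls $\{f_{1}^{(k)}=0\}$ and $\{f_{2}^{(k)}=0\}$ whose union is $\partial\widetilde{W}$; since $\gamma_{\lambda}^{\mu}$ is $C^{1}$-close to $\nabla CSD$ on the bounded set in question once $-\lambda,\mu$ are large, this transversality survives the finite-dimensional truncation. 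Hence, after the routine smoothing of corners, $\widetilde{W}\cap V_{\lambda}^{\mu}$ is an isolating block, and I would take $N$ to be $\widetilde{W}\cap V_{\lambda}^{\mu}$ (or its smoothed version) and $L$ its immediate exit set; this gives an index pair for $S$ with $N\subset\widetilde{W}\cap V_{\lambda}^{\mu}$ and $L\subset\partial N$.

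It then remains to check that $K:=p_{\lambda}^{\mu}(i^{*}(B(U_{1}',R_{1}')))\subset N\setminus L$, and since $L\subset\partial N$ it is enough to show that $K$ lies in the interior of $\widetilde{W}\cap V_{\lambda}^{\mu}$. This is the analytic point, and it is where the hypotheses $R\gg R_{1}'\gg 0$ and $n$ large enter. Because $B(U_{1}',R_{1}')$ is a ball in the finite-dimensional space $U_{1}'$ it is compact, and by the trace theorem together with the a priori bounds underlying the finite-dimensional approximation $SW_{U_{1},\lambda}^{\mu}$ the set $i^{*}(B(U_{1}',R_{1}'))$ is contained in $Str(R'')$ for some $R''$ depending only on $R_{1}'$ and not on $U_{1}$, hence lies well inside $Str(R)$ when $R$ is large. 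By condition (1) of Definition \ref{dfn transverse} the outermost walls bounding $\widetilde{W}$ occur only where the coordinate $p$ is close to the levels $\sim \pm n$, which recede to infinity as $n\to\infty$, so for $n$ large $i^{*}(B(U_{1}',R_{1}'))$ is uniformly bounded away from $\partial\widetilde{W}$. Finally $p_{\lambda}^{\mu}$ is a projection, so it does not increase norms, and $p_{\lambda}^{\mu}\to\id$ uniformly on the compact set $i^{*}(B(U_{1}',R_{1}'))$ as $-\lambda,\mu\to\infty$; hence $K$ still lies in the interior of $\widetilde{W}\cap V_{\lambda}^{\mu}$, which gives $K\subset N\setminus L$.

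The main obstacle I anticipate is the isolating-block step: $\widetilde{W}$ is carved out by the $f_{i}$ and therefore has corners, so one must argue carefully that the transversality of $\gamma_{\lambda}^{\mu}$ to the walls persists after the perturbation $p_{\lambda}^{\mu}c_{0}^{\bP}$, uniformly in the parameters, so that the immediate exit set $L$ is genuinely closed, positively invariant in $N$, and contained in $\partial N$. An alternative that sidesteps the corners is to build $(N,L)$ from a Lyapunov function for $S$ as in \eqref{eq index pairs} and Section \ref{subsec attractor}, choosing the cut-off constants so that $K$ lies strictly on the interior side of every level set used; the verification that such a choice is possible again reduces to $K\subset\Int(\widetilde{W}\cap V_{\lambda}^{\mu})$, proved as above.
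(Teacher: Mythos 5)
The paper's proof proceeds by a different, and slicker, route than yours. Rather than attempting to make $\widetilde{W}\cap V_{\lambda}^{\mu}$ an isolating block, the paper modifies the flow: it fixes a large closed ball $B\subset\widetilde{W}\cap V_{\lambda}^{\mu}$ that is an isolating neighborhood of $\Inv(\widetilde{W}\cap V_{\lambda}^{\mu})$ and contains $p_{\lambda}^{\mu}(i^{*}(B(U_1',R_1')))$, and a cutoff $\chi$ with $\chi^{-1}(0)=B$ and $\chi\equiv 1$ near $\partial(\widetilde{W}\cap V_{\lambda}^{\mu})$. For the flow generated by $\chi\gamma_{\lambda}^{\mu}$ every point of $B$ is a rest point, so $B\subset\Inv(\widetilde{W}\cap V_{\lambda}^{\mu};\chi\gamma_{\lambda}^{\mu})$ and any index pair $(N,L)$ for this set with respect to $\chi\gamma_{\lambda}^{\mu}$ automatically has $B\subset N\setminus L$. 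Because $\chi\gamma_{\lambda}^{\mu}$ and $\gamma_{\lambda}^{\mu}$ have the same direction field outside $B$, the pair $(N,L)$ is also an index pair for the original flow, and the lemma follows with no isolating-block or Lyapunov-level analysis.

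Your primary approach is genuinely different and is incomplete at exactly the point the cutoff trick is designed to handle. You reduce the claim to exhibiting $\widetilde{W}\cap V_{\lambda}^{\mu}$ (after smoothing corners) as an isolating block, so that $L\subset\partial N$ and it suffices to check $p_{\lambda}^{\mu}(i^{*}(B(U_1',R_1')))\subset\Int(\widetilde{W}\cap V_{\lambda}^{\mu})$. But the isolating-block structure is never established: the transversality in Definition \ref{dfn transverse} is stated for $\nabla CSD$ in the full Coulomb slice, and transferring it to the truncated vector field $l_0^{\bP}+p_{\lambda}^{\mu}c_0^{\bP}$ on a cornered domain, uniformly in $\lambda,\mu$, is a nontrivial claim that the paper never makes or needs. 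Your stated fallback has a real gap as well: in the construction around \eqref{eq index pairs} the levels $a_j$ are levels of Lyapunov functions $f_j$ for globally determined attractor--repeller pairs $(S_j,R_j)$, with $f_j^{-1}(0)=S_j$ and $f_j^{-1}(1)=R_j$; there is no freedom to push $a_1$ below $\min_K f_1$ (or $a_3$ above $\max_K f_3$) unless one first knows $K$ is disjoint from $S_1$ (resp.\ $R_3$), and that is not implied by $K\subset\Int(\widetilde{W}\cap V_{\lambda}^{\mu})$, since $K$ is not invariant and may lie on trajectories that eventually leave the isolating neighborhood. Freezing the flow on $B$ is precisely what forces $K$ into the invariant set of the modified flow, and hence into every index pair, sidestepping both obstructions.
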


\begin{proof}
Fix a large compact set $B$ in $\widetilde{W} \cap V_{\lambda}^{\mu}$, which is diffeomorphic to a closed ball of dimension $\dim_{\R} V_{\lambda}^{\mu}$, is an isolating neighborhood of $\Inv (\widetilde{W} \cap V_{\lambda}^{\mu})$ and includes $p_{\lambda}^{\mu}( i^* B(U_1', R_1') )$.
Let $\chi:\widetilde{W} \cap V_{\lambda}^{\mu} \rightarrow [0,1]$ be a smooth function such that
\[
   \begin{split}
      \chi^{-1}(0) &= B, \\
      \chi &= 1 \ 
         \text{on a neighborhood of $\partial ( \widetilde{W} \cap V_{\lambda}^{\mu} )$}.
   \end{split}
\]
Note that the flows $\gamma_{\lambda}^{\mu}$ and $\chi \gamma_{\lambda}^{\mu}$ have the same directions outside $B$. Hence $\widetilde{W} \cap V_{\lambda}^{\mu}$ is an isolating neighborhood of $\Ind(\widetilde{W} \cap V_{\lambda}^{\mu}; \chi \gamma_{\lambda}^{\mu})$ with respect to $\chi \gamma_{\lambda}^{\mu}$. 
Let $(N, L)$ be an index pair of $\Inv ( \widetilde{W} \cap V_{\lambda}; \chi \gamma_{\lambda}^{\mu} )$ with respect to the flow $\chi \gamma_{\lambda}^{\mu}$.  Then 
\[
       N \subset \widetilde{W} \cap V_{\lambda}^{\mu}, \
       p_{\lambda}^{\mu} i^* ( B(U_1', R_1' )) \subset B \subset  N \backslash L.
\]
The pair $(N,L)$ is also an index pair for $\Inv(\widetilde{W} \cap V_{\lambda}^{\mu};\gamma_{\lambda}^{\mu})$ with respect to the original flow $\gamma_{\lambda}^{\mu}$ since $\gamma_{\lambda}^{\mu}$ and $\chi \gamma_{\lambda}^{\mu}$ have the same directions outside $B$ as stated.
\end{proof}

Fix a regular index pair $(N, L)$ satisfying (\ref{eq cond N L}).
(See \cite[Definition5.1]{Sala} for the definition of a regular index pair. We can always find a regular index pair \cite[Remark 5.4]{Sala}.)
We will see that the following is well defined and continuous for large $-\lambda, \mu$, $U_1$, $T > 0$ and small $\epsilon > 0$:
\begin{gather} \label{eq rel}
      \widetilde{\psi}: (U_1')^+ \longrightarrow U_1^+ \wedge I_{\lambda}^{\mu}(\widetilde{W}) \\
      \widetilde{\psi}( \hat{x}_1 ) =   \nonumber \\
        \left\{
          \begin{array}{ll}
           ( \pr_{U_1}  sw(\hat{x}_1  ), y_1 \cdot T) &
              \text{if $\| \pr_{U_1}  sw(\hat{x}_1) \| < \epsilon$,
                    $y_1 \cdot [0, T] \subset 
                      N \backslash L$,}
                          \nonumber \\
             * & \text{otherwise}.
          \end{array}
        \right.
\end{gather}
Here $y_1 = p_{\lambda}^{\mu}  i^* \hat{x}_1$ and we think of $U_1^+$ and $(U_1')^+$ as $B(U_1, \epsilon)/S(U_1, \epsilon)$ and $B(U_1', R'_1)/S(U_1', R'_1)$ respectively.

\begin{lem} \label{lem rel inv T_0}
Let $(N, L)$ be a regular index pair of $\Inv( \widetilde{W} \cap V_{\lambda}^{\mu})$ satisfying (\ref{eq cond N L}).
Fix  large positive numbers $R \gg R_1' \gg 0$. There is $T_0 > 0$ independent of $U_1, \lambda, \mu, \epsilon$ such that if $T > T_0$ for large $U_1$,  $-\lambda, \mu, \gg 0$ and small $\epsilon > 0$, (\ref{eq rel}) is well defined and continuous.
\end{lem}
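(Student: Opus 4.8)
The plan is to establish the two assertions --- that the formula (\ref{eq rel}) descends to a map on the one-point compactification $(U_1')^+$, and that it is continuous --- by the same mixture of Conley index theory and Seiberg--Witten compactness already used in the proof of Lemma \ref{lem f T_0} and in \cite{KM ver 1, Mano b1=0}. Throughout I keep the running assumption $b_1(Y) = 1$; the general case is identical.

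\emph{Continuity on the ball.} First I would check that (\ref{eq rel}), read as a formula on $B(U_1', R_1')$, is continuous. The assignments $\hat{x}_1 \mapsto \pr_{U_1} sw(\hat{x}_1)$ and $\hat{x}_1 \mapsto y_1 = p_\lambda^\mu i^* \hat{x}_1$ are continuous, as is the flow $\gamma_\lambda^\mu$ on the finite-dimensional space $V_\lambda^\mu$, so the only points to examine are those where the value jumps to the basepoint. If $\| \pr_{U_1} sw(\hat{x}_1) \| \to \epsilon$, the $U_1^+$-factor tends to its basepoint and the smash product tends to $*$, so this transition is automatically continuous. If instead $y_1 \cdot [0, T]$ ceases to lie in $N \backslash L$, then either it leaves $N$ --- and by property (3) of an index pair there is $t_0 < T$ with $y_1 \cdot t_0 \in L$ --- or it first meets $L$ at some $t_0 \le T$; in both cases the positive invariance of $L$ (property (4)) forces $y_1 \cdot T \in L$, i.e. $[y_1 \cdot T] = [L]$ is the basepoint of $N/L$. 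Since $(N, L)$ is a \emph{regular} index pair, the relevant exit-time function is continuous (this is the content of \cite[Lemma 5.2]{Sala}, already used for the function $s$ of Section \ref{subsec attractor}), and one concludes that (\ref{eq rel}) is continuous on all of $B(U_1', R_1')$, for every $T$.

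\emph{Well-definedness.} The map descends to $(U_1')^+ = B(U_1', R_1')/S(U_1', R_1')$ once it is shown to send the boundary sphere $S(U_1', R_1')$ to the basepoint; this is where $T > T_0$ and the largeness of $U_1, -\lambda, \mu$ enter, and it is the main obstacle. I would argue by contradiction exactly as in Lemma \ref{lem f T_0}: if no uniform $T_0$ works, choose $T_\alpha \to \infty$, $-\lambda_\alpha, \mu_\alpha \to \infty$, an exhausting sequence $U_{1,\alpha}$, $\epsilon_\alpha \to 0$, regular index pairs $(N_\alpha, L_\alpha)$ satisfying (\ref{eq cond N L}), and $\hat{x}_{1,\alpha} \in U_{1,\alpha}'$ with $\| \hat{x}_{1,\alpha} \| = R_1'$, $\| \pr_{U_{1,\alpha}} sw(\hat{x}_{1,\alpha}) \| < \epsilon_\alpha$ and $y_{1,\alpha} \cdot [0, T_\alpha] \subset N_\alpha \backslash L_\alpha$. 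Since $\widetilde{W} \subset Str(R)$ is bounded (the first condition of Definition \ref{dfn transverse}), the trajectory $T \mapsto y_{1,\alpha} \cdot T$ has energy bounded independently of $\alpha$, so by the compactness theory for Seiberg--Witten trajectories on $Y \times \R$ (\cite[Section 5]{KM Three manifold}) a subsequence converges, uniformly on compact subsets of $[0, \infty)$, to a finite-energy trajectory limiting to a critical point of $CSD$ inside $\widetilde{W}$. At the same time, writing $sw = \hat{L}_{\hat{A}_1, \bP} + (\text{compact})$ and using that $\hat{x}_{1,\alpha} \in U_{1,\alpha}'$, the full Seiberg--Witten error of $\hat{x}_{1,\alpha}$ tends to $0$; elliptic bootstrapping and the compactness of the relative Seiberg--Witten moduli space of $X_1$ yield, after passing to a further subsequence, a genuine solution $\hat{z}_1$ of the Seiberg--Witten equations on $X_1$ in double Coulomb gauge, with $\| \hat{z}_1 \| = R_1'$, whose boundary restriction $i^* \hat{z}_1$ is the initial value of the limit trajectory. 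But for $R \gg R_1' \gg 0$ the a priori estimates imply that the moduli space of such pairs (a solution on $X_1$ glued to a finite-energy half-trajectory on $Y \times [0, \infty)$ staying inside $Str(R)$) is compact and contained in the open ball $\{ \| \cdot \| < R_1' \}$, contradicting $\| \hat{z}_1 \| = R_1'$. This produces the required $T_0$, and its independence of $U_1, \lambda, \mu, \epsilon$ comes, exactly as in Lemma \ref{lem f T_0}, from the $\alpha$-independence of the energy bound on trajectories contained in $\widetilde{W}$.

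Combining the two steps: with $T_0$ from the well-definedness argument, for all $T > T_0$, all sufficiently large $U_1, -\lambda, \mu$ and all sufficiently small $\epsilon > 0$, the formula (\ref{eq rel}) is continuous on $B(U_1', R_1')$ and constant equal to the basepoint on $S(U_1', R_1')$, hence induces a well-defined continuous map $(U_1')^+ \to U_1^+ \wedge I_\lambda^\mu(\widetilde{W})$. The hard part is the uniform compactness statement producing the limit solution on $X_1$ together with its limiting boundary half-trajectory; everything else is a routine adaptation of the Conley-index bookkeeping of \cite{KM ver 1, Mano b1=0}.
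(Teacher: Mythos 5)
Your proposal follows essentially the same route as the paper's proof: the contradiction argument with sequences $T_\alpha, -\lambda_\alpha, \mu_\alpha \to \infty$, $\dim U_{1,\alpha} \to \infty$, $\epsilon_\alpha \to 0$ and $\|\hat{x}_{1,\alpha}\| = R_1'$, bounded energy from $y_{1,\alpha}\cdot[0,T_\alpha] \subset \widetilde W$, convergence to a solution on $X_1$ together with a finite-energy half-trajectory, and the final contradiction from $R_1' \gg 0$. The paper condenses the continuity discussion to a one-line appeal to regularity and gets the compactness/convergence and the final contradiction by citing Lemma 2 and Corollary 2 of Khandhawit, whereas you rederive those steps from generic elliptic bootstrapping and a priori estimates; the substance is the same.
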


\begin{proof}
To prove the map (\ref{eq rel}) is well defined, we need to show that if $\hat{x}_1 \in U_1', \| \hat{x}_1 \| = R'_1$ and $\| \pr_{U_1}  sw(\hat{x}_1) \| < \epsilon$, then $y_1 \cdot [0, T] \not \subset N \backslash L$. Assume that the proposition is false. Then we have sequences $T_{\alpha}$, $-\lambda_{\alpha}, \mu_{\alpha},  \rightarrow \infty$, $U_{1, \alpha}$ with $\dim U_{1, \alpha} \rightarrow \infty$, $\epsilon_{\alpha} \rightarrow 0$, $\hat{x}_{1,\alpha} \in U'_{1,\alpha}$ with $\| \hat{x}_{1, \alpha} \| = R'_1$ such that $y_{1, \alpha} \cdot [0, T_{\alpha}] \subset N_{\alpha} \backslash L_{\alpha} \subset \widetilde{W}$. Here $(N_{\alpha}, L_{\alpha})$ is an index pair for $\Inv(\widetilde{W} \cap V_{\lambda_{\alpha}}^{\mu_{\alpha}})$ satisfying (\ref{eq cond N L}) and $y_{1, \alpha} =  p_{\lambda_{\alpha}}^{\mu_{\alpha}} i^*( \hat{x}_{1, \alpha} )$. 
The assumptions that $\| x_{1, \alpha} \| = R_1'$ and that $y_{1, \alpha} \cdot [0, T_{\alpha}] \subset N_{\alpha} \backslash L_{\alpha} \subset \widetilde{W}$ imply that the energy of $(\hat{x}_{1, \alpha}, \{ y_{1, \alpha} \cdot T \}_{0 \leq T \leq T_{\alpha}})$ is bounded by  a constant independent of $\alpha$.
By (a slightly different version of) Lemma 2 in \cite{Khandhawit}, we can find subsequences $\hat{x}_{1,\alpha'}$ conversing to a solution $\hat{x}_1$ to the Seiberg-Witten equations on $X_1$ with $\| \hat{x}_1 \| = R_1'$ and $y_{1 \alpha'}:[0,T_{\alpha'}] \rightarrow V_{\lambda_{\alpha'}}^{\mu_{\alpha'}}$ conversing to a finite energy trajectory $y_{1}:[0,\infty) \rightarrow V$ on each compact set in $[0, \infty)$, and we have $i^*(\hat{x}_1) = y_1(0)$. Since $R_1' \gg 0$, this is a contradiction to Corollary 2 in \cite{Khandhawit}.

Using the assumption that $(N, L)$ is regular, we can see that $\widetilde{\psi}$ is continuous.

\end{proof}

Taking the desuspension of (\ref{eq rel}) we get a morphism
\begin{equation} \label{eq mor to J}
    \Sigma^{-V_{\lambda}^0(A_0, g, \bP)} (U_1')^+ 
        \rightarrow 
    U_1^+ \wedge  J(\widetilde{W};A_0, g, \bP).
\end{equation}
Next we define a morphism $J(\widetilde{W}, A_0, g, \bP) \rightarrow \SWF(Y, \fc, g, \bP)$ as follows. 
By Lemma \ref{lem homotopy eq},  we have isomorphisms
\[
     J (\widetilde{W})
       \cong
     \Sigma^{-1} C \big( k:J(W^1_{-n} \cup \cdots \cup W^1_n)  )  \rightarrow
          \Sigma J(W^2_{-n} \cup \cdots \cup W^2_{n} ) \big)
\]
and
\[
  \begin{split}
   &  J(W^1_{-n} \cup \cdots \cup W^1_{n}) \cong J(W^1_{-n}) \vee \cdots \vee J(W^1_{n}),  \\
   &  J(W^2_{-n} \cup \cdots \cup W^2_{n}) \cong
         J(W^2_{-n}) \vee \cdots \vee J(W^2_{n}).
   \end{split}
\]
The following diagram is commutative up to canonical homotopy:
\[
     \xymatrix{
      J(W^1_{-n}) \vee \cdots \vee J(W^1_n) \ar[d]_k \ar[r] & J(W^1_n) \ar[d]^{k^1}  \\
      J(W^2_{-n}) \vee \cdots \vee J(W^2_n) \ar[r]  & J(W^2_n)
     }
\]
Here the morphism $J(W^i_{-n}) \vee \cdots \vee J(W^i_n) \rightarrow J(W^i_n)$ is the morphism induced by $\ff_1$. (More precisely, we need to choose trivialization of a vector space and a vector bundle  as in Section \ref{subsec fi fj} to get the homotopy.) Therefore we get
\begin{equation} \label{eq mor J to SWF}
     J(\widetilde{W}) 
        \rightarrow
     \SWF(Y, \fc, g, \bP).
\end{equation}
Composing this morphism with (\ref{eq mor to J}), we get
\[
      \psi_{X_1, \hat{\fc}_1, g, \bP}:
      \Sigma^{V_{\lambda}^0(A_0, g, \bP)}(U_1')^+ 
          \rightarrow 
      U_1^+ \wedge \SWF(Y,\fc, g, \bP).
\]
Although we assumed that $b_1(Y) = 1$ and $N=2$, the construction can be generalized to any case.
More generally, for each submodule $H \subset H^1(Y;\Z)$ of rank $b_1(Y)$ we can define a morphism
\[
      \psi_{X_1, \hat{\fc}_1, H, g, \bP}:
      \Sigma^{-V_{\lambda}^0(A_0, g, \bP)} (U_1')^+ 
          \rightarrow 
      U_1^+ \wedge \SWF(Y,\fc, H)
\]
in $\fC$.

\begin{prop}
The morphism $\psi_{X_1, \hat{\fc}_1, H, g, \bP}$ is independent of the choices of connection $\hat{A}_1$ with $\hat{A}_1|_Y$ flat, $U_1$, $\lambda, \mu$, Riemannian metric $\hat{g}_1$ with $\hat{g}_1|_{Y} = g$.
\end{prop}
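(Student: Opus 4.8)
The plan is to treat the four parameters in turn --- $U_1$, then the spectral cutoffs $(\lambda,\mu)$, then the connection $\hat A_1$, then the metric $\hat g_1$ --- exhibiting in each case a canonical isomorphism, and at the end to check, exactly as in the independence proofs of Section \ref{subsec proof of prop indep}, that these isomorphisms are transitive along composed deformations, so that the relative invariant is well defined. The key structural observation is that $\psi_{X_1,\hat\fc_1,H,g,\bP}$ is the composition of the desuspension (\ref{eq mor to J}) of the finite-dimensional approximation map $\widetilde\psi$ of (\ref{eq rel}) with the morphism (\ref{eq mor J to SWF}) $J(\widetilde W)\to\SWF(Y,\fc,H,g,\bP)$, and that the latter --- together with the desuspension by $V_\lambda^0(A_0,g,\bP)$ and the identification $J(\widetilde W)\cong\Sigma^{-1}C(k)$ --- is manufactured purely from data on $Y$: the metric $g$, the spin-c structure $\fc$, the flat connection $A_0$ (recall $\hat A_1|_Y=A_0$), the spectral section $\bP$, the transverse double system, and the numbers $\lambda,\mu$. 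Under a deformation of $U_1$, $\hat A_1$ or $\hat g_1$ this $Y$-side morphism does not change at all, so it suffices to compare the $X_1$-side maps; under a deformation of $(\lambda,\mu)$ it changes through the canonical homotopy equivalences of Lemma \ref{lem h.e. lambda mu}, with which (\ref{eq rel}) must be shown to be compatible.

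Independence from $U_1$ is the standard Bauer--Furuta argument of \cite{BF}: given $U_1\subset\widetilde U_1$ with $\widetilde U_1\oplus V_\lambda^\mu$ still transverse to $\Image\hat L_{\hat A_1,\bP}$, the approximation $SW_{\widetilde U_1,\lambda}^\mu$ differs from $SW_{U_1,\lambda}^\mu$ by a linear isomorphism on the complement $\widetilde U_1\ominus U_1$, so the corresponding map $\widetilde\psi$ is $U(1)$-homotopic to $\widetilde\psi_{U_1}$ smashed with the identity of $(\widetilde U_1\ominus U_1)^+$, and after desuspension these represent the same morphism in $\fC$. For independence from $(\lambda,\mu)$ one runs the argument of Lemma \ref{lem h.e. lambda mu}: enlarging $U_1$ if necessary (using the previous step) so that transversality holds for both pairs, interpolate the $L^2$-projections onto $V_{\lambda'}^{\mu'}$ and $V_\lambda^\mu$, check that $\widetilde W\cap V_{\lambda'}^{\mu'}$ is an isolating neighborhood for all the interpolated flows and that the index pair condition (\ref{eq cond N L}) persists, and obtain a canonical homotopy equivalence compatible with (\ref{eq rel}); this reduces $(\lambda',\mu')$ to $(\lambda,\mu)$ up to the canonical desuspension.

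Independence from $\hat A_1$: choose a path $\{\hat A_1^t\}_{t\in[0,1]}$ of connections on $\det\hat\fc_1$ with $\hat A_1^t|_Y=A_0$ flat for every $t$. Since the restriction to $Y$ is constant, the operators $\hat L_{\hat A_1^t,\bP}=\hat d^+\oplus(D_{\hat A_1^t}+B_0^\bP\tau)\oplus p^\mu i^*$ form a norm-continuous family of Fredholm operators of constant index, so one can pick a single finite-dimensional $U_1$ transverse to all of them and get a continuous family of finite-dimensional approximations $SW_{U_1,\lambda}^{\mu,t}$. The point is then a parametrized version of Lemma \ref{lem rel inv T_0}: there is a $T_0$ independent of $t$ such that for $T>T_0$, large $-\lambda,\mu$ and $U_1$, and small $\epsilon$, the maps $\widetilde\psi^t$ of (\ref{eq rel}) are all well defined and continuous, so that $(t,\hat x_1)\mapsto\widetilde\psi^t(\hat x_1)$ --- after trivializing the vector bundle $\{(U_1')^t\}_t$ over $[0,1]$ as in Section \ref{subsec fi fj} --- is the desired $U(1)$-homotopy. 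This rests on a compactness statement uniform in $t$ for the parametrized Seiberg--Witten equations on $X_1$ with the double Coulomb boundary condition: a sequence of approximate solutions of bounded energy for parameters $t_\alpha$ subconverges (along a subsequence) to an honest finite-energy configuration for some $t\in[0,1]$, by the parametrized analogue of Lemma 2 and Corollary 2 of \cite{Khandhawit}, after which the contradiction argument of Lemma \ref{lem rel inv T_0} goes through verbatim; one also notes that, the $Y$-side data being unchanged, the index pair $(N,L)$ can be chosen as in Lemma \ref{lem N L} so that (\ref{eq cond N L}) holds simultaneously for all $t$, using compactness of $[0,1]$.

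Independence from $\hat g_1$ is proved in exactly the same way, via a path $\{\hat g_1^t\}$ of metrics on $X_1$ with $\hat g_1^t|_Y=g$ for all $t$. Here again only the $X_1$-side is affected: the spaces $\Omega^1_{\hat g_1^t}(X_1)$ defined by the double Coulomb condition, the space $\cU_{X_1}$, the operators $\hat L_{\hat A_1,\bP}^t$ and the map $sw^t$ vary continuously in $t$, while $V_\lambda^\mu(A_0,g,\bP)$, the index pair, and $\SWF(Y,\fc,H,g,\bP)$ do not; choosing $U_1$ transverse along the whole path and running the same parametrized-compactness argument produces the homotopy between the two approximations. The main obstacle in the whole proof is precisely this parametrized compactness: establishing, uniformly over the compact family of connections or metrics, that finite-energy trajectories of the truncated flow together with bounded approximate solutions on $X_1$ cannot escape $N\setminus L$ --- which requires the parametrized versions of the compactness results of \cite{KM Three manifold} (for the $Y\times\R$ end) and of \cite{Khandhawit} (for the double-Coulomb boundary value problem on $X_1$). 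Once this is in hand, everything else is routine homotopy bookkeeping, and transitivity of the resulting isomorphisms along composed deformations follows the pattern of the commutative triangles in Section \ref{subsec proof of prop indep}, completing the proof.
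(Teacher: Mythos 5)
The paper gives no proof here---the text after the proposition reads ``The proof of this proposition is omitted''---so there is no argument to compare against, and the question is whether your sketch is a plausible completion. Your overall strategy is sound and consistent with the independence arguments of Section \ref{subsec proof of prop indep}: split the morphism into the $X_1$-side finite-dimensional approximation (\ref{eq rel}) and the $Y$-side morphism (\ref{eq mor J to SWF}), observe that the $Y$-side data is unaffected by $U_1$, $\hat{A}_1$, $\hat{g}_1$, and handle $(\lambda,\mu)$ through the interpolated projections as in Lemma \ref{lem h.e. lambda mu}. The $U_1$ step is indeed the standard Bauer--Furuta stabilization argument, and the identification of parametrized compactness (a uniform version of Lemma~2 and Corollary~2 of \cite{Khandhawit} together with the monopole compactness of \cite{KM Three manifold}) as the analytic crux of the $\hat{A}_1$- and $\hat{g}_1$-steps is the right observation.

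There is, however, a gap in your treatment of the connection. The proposition asserts independence from $\hat{A}_1$ subject only to $\hat{A}_1|_Y$ being \emph{flat}, not subject to $\hat{A}_1|_Y=A_0$ for a fixed $A_0$. Your argument interpolates only along a path $\{\hat{A}_1^t\}$ with $\hat{A}_1^t|_Y$ held constant, so it only compares two connections with the same boundary restriction $A_0$. If $\hat{A}_1$ and $\hat{A}_1'$ restrict to different flat connections $A_0\ne A_0'$, then the source $\Sigma^{-V_\lambda^0(A_0,g,\bP)}(U_1')^+$, the Conley indices $J(\widetilde{W};A_0,g,\bP)$, and the object $\SWF(Y,\fc,H,g,\bP;A_0)$ all change, and one must show that $\psi_{X_1,\hat\fc_1,H,g,\bP}$ intertwines the canonical isomorphisms of Section \ref{subsec ind A0} identifying these objects for $A_0$ and $A_0'$. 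This requires exactly the kind of homotopy-commutativity check carried out in Sections \ref{subsec ind A0}--\ref{subsec ind trans}: one has to deform both sides along a path of flat boundary connections, produce a homotopy relating the finite-dimensional maps after suspension by suitable vector spaces, and check that the resulting isomorphism agrees with the one built into the well-definedness of $\SWF$. Your framing ``the $Y$-side morphism does not change at all'' is only true under the extra hypothesis $\hat{A}_1^t|_Y\equiv A_0$, so this case needs to be added explicitly. Two further small points to make precise when writing the argument out: the set $\widetilde{W}=W^1_{-n}\cup\cdots\cup W^2_n$ depends on $n$, which in turn depends on $U_1'$ and $R_1'$, so invariance under enlarging $n$ (Section \ref{subsec proof of prop indep}, independence from $\bn$) must be folded into the $U_1$-step; and the regular index pair $(N,L)$ of Lemma \ref{lem N L} depends on $i^*(B(U_1',R_1'))$, so along a path of metrics or connections the condition (\ref{eq cond N L}) must be arranged uniformly in $t$ by first fixing a compact set containing $\bigcup_t p_\lambda^\mu i^*(B((U_1')^t,R_1'))$.
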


The proof of this proposition is omitted.

\section{Proof of gluing formula}

\subsection{Proof of Theorem \ref{thm gluing}}

To simplify notation, we give the proof in the case $b_1(Y) = 1, N = 2$.
After choosing some data, we may think of $\Sigma^2 \Sigma^{V_{\lambda}^{\mu}} \eta \circ ( \psi_{X_1, \hat{\fc}_1, H, \hat{g}_1, \bP} \wedge \psi_{X_2, \hat{\fc}_2, \hat{g}_2, \bP} )$ as a continuous map
\[
    \Sigma^2 (U_1')^+ \wedge (U_2')^+ \rightarrow
    \Sigma^2 U_1^+ \wedge U_2^+ \wedge (V_{\lambda}^{\mu})^+.
\]
Here we think of $(U_j)^+$ and $(U_j')^+$ as $B(U_j, \epsilon) / S(U_j, \epsilon)$ and $B(U_j', R_j') / S(U_j', R_j')$ for some small $\epsilon > 0$ and large $R'_j > 0$.

\begin{prop} \label{prop y_1 - y_2}
Let $H$ be a submodule of $H^1(Y;\Z)$ generated by $m_1 h_1$, where $h_1$ is a generator of $H^1(Y;\Z)$ and $m_1$ is a positive integer. If $m_1$ is sufficiently large, then the map $\Sigma^2  \Sigma^{V_{\lambda}^{\mu}} \eta \circ (\psi_{X_1, \hat{\fc_1}, H, g, \bP} \wedge \psi_{X_2, \hat{\fc_2}, H, g, \bP})$ is $U(1)$-equivariantly homotopic to the suspension by $\R^2$ of the following map
\begin{equation} \label{eq map y_1 - y_2}
      \begin{array}{rcl}
       (U_1')^+ \wedge ( U_2')^+ & \longrightarrow & 
           U_1^+ \wedge U_2^+ \wedge ( V_{\lambda}^{\mu} )^+ \\
       (\hat{x}_1, \hat{x}_2) & \longmapsto & 
        \left\{
          \begin{array}{ll}
           ( {\displaystyle \prod_{j=1}^2 \pr_{U_j} sw( \hat{x}_j )   },
              y_1 - y_2)&
            \text{if}
              \left\{
                \begin{array}{l}
                  \| \pr_{U_j} sw( \hat{x}_j) \| < \epsilon, \\
                  \| y_1 - y_2 \| < \epsilon,
                \end{array}
              \right.   \\
          *  & \text{otherwise.}
          \end{array}
         \right.
      \end{array}
\end{equation}
Here $y_j = p_{\lambda}^{\mu}  i^* \hat{x}_j$ and $i:Y \hookrightarrow X = X_1 \cup_Y X_2$ is the inclusion, and we consider $U_j^+$ and $(U_j')^+$ to be $B(U_j, \epsilon)/S(U_j, \epsilon)$ and $B(U_j', R_j')/S(U_j,R_j')$.
\end{prop}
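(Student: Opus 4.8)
The plan is to unwind both sides of the claimed homotopy into explicit continuous maps and then connect them by a short chain of homotopies. Recall that $\psi_{X_1,\hat\fc_1,H,g,\bP}$ is the composition of the finite-dimensional approximation $\widetilde\psi\colon(U_1')^+\to U_1^+\wedge I_\lambda^\mu(\widetilde W)$ of (\ref{eq rel}) and (\ref{eq mor to J}), which flows the projected restriction $y_1=p_\lambda^\mu i^*\hat x_1$ for time $T$ into a regular index pair $(N,L)\subset\widetilde W\cap V_\lambda^\mu$, followed by the morphism $J(\widetilde W)\to\SWF(Y,\fc,H,g,\bP)$ of (\ref{eq mor J to SWF}); here $\widetilde W=W^1_{-n}\cup W^2_{-n}\cup\cdots\cup W^1_n\cup W^2_n$ is built from the action of $m_1h_1$. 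Symmetrically, $\psi_{X_2,\hat\fc_2,H,g,\bP}$ lands in $U_2^+\wedge\SWF(-Y,\fc,H,g,\bP)$, and since $\SWF(-Y)$ is built from the inverse flow $\bar\gamma$, its approximation flows $y_2=p_\lambda^\mu i^*\hat x_2$ by $-T$. Finally $\eta$ is the $S$-duality obtained from the mapping-cone duality $\eta_C$ of Section~\ref{subsec attractor}, and by Lemma~\ref{lem CD duality} (applied iteratively to the tower of mapping cones that defines $\SWF(Y)$ and $\SWF(-Y)$) the composite $\eta\circ(\varphi\wedge\bar\varphi)$ becomes $-\Sigma^2\eta_S$, where $\eta_S(z\wedge z')$ equals, up to the basepoint, $\tilde m_1(z)-\tilde m_2(z')$ whenever this norm is $<\epsilon$, with $\tilde m_1,\tilde m_2\colon V_\lambda^\mu\to V_\lambda^\mu$ homotopic to the identity.

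Putting these together, $\Sigma^2\Sigma^{V_\lambda^\mu}\eta\circ(\psi_{X_1}\wedge\psi_{X_2})$ is, after the suspension by $\R^2$, the continuous map that sends $(\hat x_1,\hat x_2)$ to $\bigl(\prod_j\pr_{U_j}sw(\hat x_j),\ \tilde m_1(y_1\cdot T)-\tilde m_2(y_2\cdot(-T))\bigr)$ when all the relevant norms are $<\epsilon$ and to the basepoint otherwise. There are now exactly two differences between this and (\ref{eq map y_1 - y_2}): the perturbations $\tilde m_j$, and the flow by $\pm T$. I would remove the perturbations first, using homotopies $\tilde m_j\simeq\id$ provided by (\ref{eq tilde m n}); one has to check that the clause ``$<\epsilon$'' and the basepoint behaviour on the boundary spheres $S(U_j',R_j')$ survive the homotopy, which follows from the uniform bounds of (\ref{eq tilde m n}) together with the compactness estimate (Lemma~\ref{lem rel inv T_0} and Corollary~2 of \cite{Khandhawit}): for $R_j'\gg0$, $\epsilon$ small and $U_j$ large, $\|\pr_{U_j}sw(\hat x_j)\|\ge\epsilon$ whenever $\|\hat x_j\|=R_j'$, independently of any auxiliary data. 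Then I would remove the flow by the straight-line homotopy $u\mapsto\bigl(\prod_j\pr_{U_j}sw(\hat x_j),\ y_1\cdot uT-y_2\cdot(-uT)\bigr)$, $u$ running from $1$ to $0$: at $u=0$ it is (\ref{eq map y_1 - y_2}), the Conley-index conditions ``$y_1\cdot[0,uT]\subset N\setminus L$'' hold trivially at $u=0$ because $p_\lambda^\mu i^*(B(U_1',R_1'))\subset N\setminus L$ by Lemma~\ref{lem N L}, and the same compactness estimate keeps the family well defined and continuous for all $u$.

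The hypothesis $m_1\gg0$ is what makes the unwinding in the first paragraph actually close up. With $m_1$ large the pieces $W^i_k$ in $\widetilde W$ (built from the $m_1h_1$-action) are uniformly far apart in $Str(R)$, so the distance inequalities (\ref{eq dist A^*}), (\ref{eq dist A^*S}) and their analogues in the tower can be arranged with $\delta,\epsilon$ independent of $m_1$; consequently the collapsing maps $\ff$ that enter the morphism (\ref{eq mor J to SWF}) and the definition of $\SWF(\pm Y)$ contribute no cross terms to the duality pairing, and the iterated application of Lemma~\ref{lem CD duality} produces exactly the difference map $\eta_S$ on $V_\lambda^\mu$ rather than a perturbation of it mixed with flow corrections.

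\textbf{Main obstacle.} I expect the genuinely delicate part to be the bookkeeping of the first paragraph: tracking the mapping-cone identifications (\ref{eq mor J to SWF}), (\ref{eq J cano iso}) together with the explicit duality formulas (\ref{eq def of eta C}) and (\ref{eq H1234}) through the full iterated-cone construction of $\SWF(Y)$ and $\SWF(-Y)$, and checking that the output is precisely $\tilde m_1(y_1\cdot T)-\tilde m_2(y_2\cdot(-T))$ with no residual attractor--repeller or $\ff$-terms. Once that identification is in hand, the two homotopies of the second paragraph are routine and are controlled entirely by the compactness of the Seiberg--Witten moduli spaces on $X_1$ and $X_2$.
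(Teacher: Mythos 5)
Your proposal follows the same overall strategy as the paper's proof: write the composite $\Sigma^2\Sigma^{V_\lambda^\mu}\eta\circ(\psi_{X_1}\wedge\psi_{X_2})$ explicitly by unwinding the finite-dimensional approximations, the morphism \eqref{eq mor J to SWF}, and the iterated mapping-cone duality via Lemma~\ref{lem CD duality}; then homotope away the flow and the perturbations using the compactness of the moduli spaces. Two points are glossed over, though, and one of them is a genuine gap.

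First, the claimed explicit form $\tilde m_1(y_1\cdot T)-\tilde m_2(y_2\cdot(-T))$ with a \emph{constant} flow-time $T$ is an oversimplification. After running the duality formulas \eqref{eq def of eta C}, \eqref{eq H1234} through the mapping-cone identifications, what one actually gets (cf.\ the paper's \eqref{eq eta psi}) is a difference $l_1(\zeta)-l_2(\zeta)$ in which $\|l_j(\zeta)-y_j\cdot\tau_j(\zeta)\|\le O(\delta)$ for $\zeta$-dependent, bounded flow-times $\tau_j(\zeta)$ (arising from \eqref{eq rel}, from the Conley-index projections, and from the functions $s,\bar s$ inside the duality $\eta_C$), together with $O(\delta)$ corrections from the maps $m_j,n_j,h_j$. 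The paper's single homotopy $H_u$ scales $\tau_j(\zeta)$ and the $O(\delta)$ term simultaneously by $(1-u)$; your two-step plan (first remove $\tilde m_j$, then remove the flow) would need to be recast in this form, because the flow-time and the perturbation are not independent constants that can be switched off separately.

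Second, and more importantly, the $\R^2$ suspension does not appear for free. After removing the flow and the perturbations, the map still has the form $(s_1(\zeta),s_2(\zeta),\,\prod_j\pr_{U_j}sw(\hat x_j),\,y_1-y_2)$, where $s_1(\zeta),s_2(\zeta)$ are the functions from \eqref{eq muC varphi}--\eqref{eq mu_C} depending nonlinearly on $t_1,t_2$ \emph{and} on $s(y_1),\bar s(y_2)$. Turning this into the genuine $\R^2$-suspension of \eqref{eq map y_1 - y_2} requires the extra deformation of $s_j(\zeta)$ (replacing $s(y_1)\mapsto 1$, $\bar s(y_2)\mapsto 0$), i.e.\ the analogue of the homotopy $H''$ from the proof of Lemma~\ref{lem CD duality}. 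Your phrase ``after the suspension by $\R^2$'' silently assumes this, but it is a real step of the argument and not automatic.
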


\begin{proof}
If $m_1$ is sufficiently large, $p_{\lambda}^{\mu} ( i^* ( B(U_j', R_j') ) ) \subset \widetilde{W}:= W^1_n \cup W_n^2$ for $j = 1, 2$.
We can take regular index pairs $(N, L)$ and $(N, \overline{L})$ for $\Inv ( \widetilde{W} \cap V_{\lambda}^{\mu} , \gamma_{\lambda}^{\mu} )$ and $\Inv ( \widetilde{W} \cap V_{\lambda}^{\mu} , \bar{\gamma}_{\lambda}^{\mu} )$ such that
\[
   \begin{split}
  & i^*( B(U_1', R_1') ) \subset N \backslash L 
         \subset  \widetilde{W} \cap V_{\lambda}^{\mu}, \\
  & i^*( B(U_2', R_2') ) \subset N  \backslash \overline{L} 
        \subset \widetilde{W} \cap V_{\lambda}^{\mu},   \\
  & \text{$N$ is a manifold with boundary $\partial N = L \cup \overline{L}$}, \\
  & \partial L = \partial \overline{L} = L \cap \overline{L}.
   \end{split}
\]
See Lemma \ref{lem N L} and \cite[Section 3.2]{Cornea Homotopical}. ($\bar{\gamma}_{\lambda}^{\mu}$ is the inverse flow of $\gamma_{\lambda}^{\mu}$.)
 
   Take $(t_j, \hat{x}_j) \in \Sigma (U_j')^+$ and put $y_j = p_{\lambda}^{\mu} i^*\hat{x}_j$. 
As in the proof of Lemma \ref{lem CD duality}, we can write
\begin{gather} 
   \Sigma^2 \eta \circ (\psi_{X_1} \wedge \psi_{X_2}) 
              (t_1, t_2, \hat{x}_1, \hat{x}_2)  \nonumber \\
                  \label{eq eta psi}
  = \left\{
       \begin{array}{ll}
         (s_1(\zeta), s_2(\zeta),  \pr_{U_1} sw(\hat{x}_1), \pr_{U_2} sw(\hat{x}_2),
            l_1(\zeta) - l_2(\zeta) ) &
         \text{if} \left\{
              \begin{array}{l}
                \| \pr_{U_j} sw(\hat{x}_j) \|  < \epsilon, \\
                y_1  \cdot [0, T] \subset 
                  N \backslash L,\\
               y_2 \cdot [-T, 0] \subset N \backslash \overline{L}, \\
                \| l_1(\zeta) - l_2(\zeta)) \| < \epsilon,  \\
                \text{$1-s( y_1) \leq t_1 \leq 1$ or}  \\
                 1- \bar{s}(y_2) \leq t_2 \leq 1,   \\
              \end{array}
                  \right.   \vspace{2mm} \\
     * & \text{otherwise.}
       \end{array}
    \right.
\end{gather}
Here $\zeta =(t_1, t_2, \hat{x}_1,  \hat{x}_2)$, $s, \bar{s}:N \rightarrow [0,1]$ and $l_j:N \rightarrow N$ with
\[
      \| l_j(\zeta) - y_j \cdot \tau_j(\zeta) \| \leq O(\delta)
\]
for some continuous function $\tau_j$ of $\zeta$ with $\tau_1 \geq 0, \tau_2 \leq 0$ and with $| \tau_j |$ bounded by a constant independent of $U_j$, $\lambda, \mu$. (The boundedness of $\tau_j$ comes from Lemma \ref{lem f T_0} and Lemma \ref{lem rel inv T_0}.)

We can write
\[
      l_1(\zeta) - l_2(\zeta) = 
      y_1  \cdot \tau_1(\zeta) -
       y_2 \cdot \tau_2(\zeta)
      +O(\delta).
\]
For $u \in [0,1]$, let $H_u$ be a continuous map
\[
   \Sigma^2 (U_1')^+ \wedge (U_2')^+ \rightarrow 
     \Sigma^2 (U_1)^+ \wedge (U_2)^+ \wedge (V_{\lambda}^{\mu})^+
\]
defined by 
\begin{gather*}
     H_u(\zeta) =  \\
      \left\{
       \begin{array}{ll}
         (s_1(\zeta), s_2(\zeta),  \pr_{U_1} sw(\hat{x}_1), \pr_{U_2} sw(\hat{x}_2),
            L_u(\zeta) ) &
         \text{if} \left\{
              \begin{array}{l}
                \| \pr_{U_j} sw(\hat{x}_j) \|  < \epsilon, \\
                 y_1  \cdot [0, (1-u) T] \subset 
                  N \backslash L,\\
               y_2 \cdot [-(1-u)T, 0] \subset N \backslash \overline{L}, \\
                \| L_u(\zeta) \| < \epsilon,  \\
                \text{$1-s( y_1) \leq t_1 \leq 1$ or}  \\
                 1- \bar{s}(y_2) \leq t_2 \leq 1,   \\
              \end{array}
                  \right.   \vspace{2mm} \\
     * & \text{otherwise.}
       \end{array}
    \right.
\end{gather*}
Here
\[  
     L_u(\zeta) = 
      y_1  \cdot (1-u) \tau_1(\zeta) -
      y_2 \cdot (1-u) \tau_2(\zeta)
     + (1-u) O(\delta).
\]
We can prove that $H_u$ is well defined for large $U_1, U_2, -\lambda, \mu$ and small $\delta, \epsilon$ as in \cite[p.130]{Manolescu Gluing} using the compactness of the moduli space of monopoles on a closed 4-manifold.
We can see that
\begin{gather*}
     H_1(\zeta) =   \\
      \left\{
        \begin{array}{ll}
        (  s_1(\zeta), s_2(\zeta), {\displaystyle \prod_{j=1}^2 \pr_{U_j} sw(\hat{x}_j)}, 
         y_1 - y_2)  &
         \text{if}
           \left\{
             \begin{array}{l}
                \| \pr_{U_j} sw(\hat{x}_j) \| < \epsilon, \\
                \| y_1 - y_2  \| < \epsilon, \\ 
                1-s(y_1) \leq t_1 \leq 1  \ \text{or} \\
                1-\bar{s}(y_2) \leq t_2 \leq 1,                
             \end{array}
           \right.   \\
        * & \text{otherwise.}
        \end{array}   
      \right.
\end{gather*}

The same deformation of $s_j(\zeta)$ as that in the proof of Lemma \ref{lem CD duality} gives a homotopy from $H_1$  to the suspension by $\R^2$ of the map (\ref{eq map y_1 - y_2}). 
We have done the proof of Proposition \ref{prop y_1 - y_2}.
\end{proof}

\noindent
{\it Proof of Theorem \ref{thm gluing}} 

Although we used a different boundary condition to define the relative invariants from that of \cite{Manolescu Gluing}, we can apply the proof of the gluing formula in \cite[Section 4]{Manolescu Gluing} to (\ref{eq map y_1 - y_2})  with some modification (\cite{Mano errata}) and we have done the proof of Theorem \ref{thm gluing}. 
\qed



\begin{thebibliography}{99} 


\bibitem{APS III}
M. F. Atiyah, V. K. Patodi, I. M. Singer,  
{\it Spectral asymmetry and Riemannian geometry. III}, 
Math. Proc. Cambridge Philos. Soc. {\bf 79} (1976),  71--99.


\bibitem{BF}
S. Bauer and M. Furuta,
{A stable cohomotopy refinement of Seiberg-Witten invariants. I},
Invent. Math. 155 (2004),  1--19. 



\bibitem{Conley Isolated}
C. Conley,
{\it  Isolated invariant sets and the Morse index},
CBMS Regional Conference Series in Mathematics, 38. American Mathematical Society, Providence, R.I., 1978.


\bibitem{Cornea Homotopical}
O. Cornea,
{\it  Homotopical dynamics: suspension and duality}, 
Ergodic Theory Dynam. Systems {\bf 20} (2000), 379--391.


\bibitem{Dold Puppe}
A. Dold and D. Puppe,
{\it Duality, trace and transfer},
Proceedings of the Steklov Institute of Mathematics, {\bf 154} (1984), 85--103.





\bibitem{Furuta Monopole}
M. Furuta,
{\it Monopole equation and the $\frac{11}{8}$-conjecture},
Math. Res. Lett. {\bf 8} (2001),  279--291.


\bibitem{FK}
M. Furuta and Y. Kametani,
{\it Equivariant maps between sphere bundles over tori and KO-degree},
preprint, arXiv:math/0502511.

\bibitem{FL}
M. Furuta and T. J. Li, 
{\it Intersection forms of spin 4-manifolds with boundary},
preprint.


\bibitem{Khandhawit}
T. Khandhawit,
{\it A new gauge slice for the relative Bauer-Furuta invariants},
preprint, arXiv:1401.7590.


\bibitem{KM ver 1}
P. B. Kronheimer and C. Manolescu,
{\it Periodic Floer pro-spectra from the Seiberg-Witten equations, ver.1},
preprint, arXiv:math/0203243v1.


\bibitem{KM ver 3}
P. B. Kronheimer and C. Manolescu,
{\it Periodic Floer pro-spectra from the Seiberg-Witten equations, ver.3},
preprint, arXiv:math/0203243v3.


\bibitem{KM Three manifold}
P. B. Kronheimer and T. S. Mrowka,
{\it Monopoles and three-manifolds,} 
New Mathematical Monographs, 10. Cambridge University Press, Cambridge, 2007.



\bibitem{Lin}
J. Lin,
{\it Pin(2)-equivariant KO-theory and intersection forms of spin four-manifolds},
preprint,  arXiv:1401.3264.




\bibitem{Mano b1=0}
C. Manolescu,
{\it Seiberg-Witten-Floer stable homotopy type of three-manifolds with $b_1=0$,} 
Geom. Topol. {\bf 7} (2003), 889--932.

\bibitem{Manolescu Gluing}
C. Manolescu,
{\it A gluing theorem for the relative Bauer-Furuta invariants},
J. Differential Geom. {\bf 76} (2007),  117--153.

\bibitem{Mano triangulation}
C. Manolescu,
{\it Pin(2)-equivariant Seiberg-Witten Floer homology and the Triangulation Conjecture},
preprint,  arXiv:1303.2354.

\bibitem{Mano intersection form}
C. Manolescu,
{\it On the intersection forms of spin four-manifolds with boundary},
preprint, arXiv:1305.4667.

\bibitem{Mano errata}
C. Manolescu,
{\it Errata to the article ``A gluing theorem for the relative Bauer-Furuta invariants"},
to appear.


\bibitem{Mar}
H. R. Margolis,
{\it Spectra and the Steenrod algebra,} 
North-Holland Mathematical Library, 29. North-Holland Publishing Co., Amsterdam, 1983.



\bibitem{McCord}
C. McCord,
{\it Poincar\'{e}-Lefschetz duality for the homology Conley index}, 
Trans. Amer. Math. Soc. {\bf 329} (1992), 233--252.

\bibitem{MP}
R. B. Melrose and P. Piazza, 
{\it Families of Dirac operators, boundaries and the b-calculus},
J. Differential Geom. {\bf 46} (1997),  99--180.


\bibitem{Sala}
D. Salamon,
{\it Connected simple systems and the Conley index of isolated invariant sets},
Trans. Amer. Math. Soc. {\bf 291} (1985),  1--41. 


\bibitem{Spanier Function}
E. H. Spanier,
{\it Function spaces and duality},
Ann. of Math. {\bf 70} (1959) 338--378.



\end{thebibliography}
\end{document}